\def\disp{\displaystyle}
\def\dref#1{(\ref{#1})}
\theoremstyle{plain}
\newtheorem{theorem}{Theorem}[section]
\newtheorem{lemma}{Lemma}[section]
\theoremstyle{definition}
\newtheorem{definition}{Definition}[section]
\newtheorem{remark}{Remark}[section]
\numberwithin{equation}{section}
\begin{document}

\title{\bf A new result for global existence and
 boundedness of solutions to a
  parabolic--parabolic Keller--Segel system with logistic source}

\author{
Jiashan Zheng\thanks{Corresponding author.   E-mail address:
 zhengjiashan2008@163.com (J.Zheng)}, YanYan Li
 \\
    School of Mathematics and Statistics Science,\\
     Ludong University, Yantai 264025,  P.R.China \\
}
\date{}

\maketitle \vspace{0.3cm}
\noindent
\begin{abstract}
We consider the following fully parabolic Keller--Segel system with logistic source
$$
 \left\{\begin{array}{ll}
  u_t=\Delta u-\chi\nabla\cdot(u\nabla v)+ au-\mu u^2,\quad
x\in \Omega, t>0,\\
 \disp{v_t=\Delta v- v +u},\quad
x\in \Omega, t>0,\\
 \end{array}\right.\eqno(KS)
$$
over a  bounded domain
 $\Omega\subset\mathbb{R}^N(N\geq1)$,
 with smooth boundary $\partial\Omega$,
 the parameters $a\in \mathbb{R}, \mu>0, \chi>0$.
It is proved that if $\mu>0$, then $(KS)$ admits a global weak solution, while if $\mu>\frac{(N-2)_{+}}{N}\chi C^{\frac{1}{\frac{N}{2}+1}}_{\frac{N}{2}+1}$,
then $(KS)$ possesses a global
classical solution which is bounded,
where $C^{\frac{1}{\frac{N}{2}+1}}_{\frac{N}{2}+1}$ is a positive constant which is corresponding to the maximal Sobolev
regularity. Apart from this, we also show that if $a = 0$ and $\mu>\frac{(N-2)_{+}}{N}\chi C^{\frac{1}{\frac{N}{2}+1}}_{\frac{N}{2}+1}$, then both $u(\cdot, t)$ and $v(\cdot, t)$ decay to zero with respect to
the norm in $L^\infty(\Omega)$ as $t\rightarrow\infty$.
\end{abstract}

\vspace{0.3cm}
\noindent {\bf\em Key words:}~Boundedness;
Chemotaxis;
Global existence;
Logistic source

\noindent {\bf\em 2010 Mathematics Subject Classification}:~  92C17, 35K55,
35K59, 35K20

\newpage
\section{Introduction}
The Keller--Segel model (see
\cite{Keller2710,Keller79}) has been introduced in order to explain chemotaxis cells aggregation by means of a coupled system of two equations:
 a drift-diffusion type equation for the cells density $u$, and a reaction diffusion equation for the chemoattractant concentration $v$, that is, $(u,v)$ satisfies
 \begin{equation}
 \left\{\begin{array}{ll}
  u_t=\Delta u-\chi\nabla\cdot(u\nabla v),\quad
x\in \Omega, t>0,\\
 \disp{ v_t=\Delta v +u- v},\quad
x\in \Omega, t>0.\\
 \end{array}\right.\label{1.ssdessderrttrrfff1}
\end{equation}
The Keller--Segel models \dref{1.ssdessderrttrrfff1} and their
variants have been extensively studied by
many authors over the past few decades. We refer to the review papers \cite{Bellomo1216,Hillen79,Horstmann2710}
for detailed descriptions of the models and their developments.
The striking feature of Keller--Segel models  is the possibility
of blow-up of solutions in a finite (or infinite) time (see, e.g., \cite{Horstmann2710,Nanjundiahffger79312,Winkler793}), which
strongly depends on the space dimension. A finite (or infinite) time blow-up
never occurs in 1-dimension \cite{Osaki79,Xiang55672gg}  (except in some extreme nonlinear
denerate diffusion model \cite{Cie794}), a critical mass blow-up occurs in 2-dimension:
when the initial mass lies below the threshold solutions exist globally, while
above the threshold solutions blow up in finite time \cite{Horstmann79,Nagai44556710,Senbaffggsssdddssddxxss}, and generic
blow-up in higher-dimensional ($N\geq3$) (\cite{Winkler792,Winkler793}).
For the more related works in this direction, we mention that a corresponding quasilinear
version or the signal is  consumed
by the cells
has been deeply investigated by  Cie\'{s}lak et al.  \cite{Cie794,Cie791,Cie1sseerr7102},  Winkler et al. \cite{Bellomo1216,Tao794,Winkler79,Winkler72} and Zheng et al.  \cite{Zhengzseeddd0,Zhengssssssdefr23}.

In order to investigate the growth of the population,
considerable effort has been devoted to Keller--Segel models with the logistic term. For example, Winkler (\cite{Winkler37103})
 proposed
and
investigated the following fully parabolic Keller--Segel
system with logistic source 
\begin{equation}
 \left\{\begin{array}{ll}
  u_t=\Delta u-\chi\nabla\cdot(u\nabla v)+f(u),\quad
x\in \Omega, t>0,\\
 \disp{\tau v_t=\Delta v +u- v},\quad
x\in \Omega, t>0,\\
 \disp{\frac{\partial u}{\partial \nu}=\frac{\partial v}{\partial \nu}=0},\quad
x\in \partial\Omega, t>0,\\
\disp{u(x,0)=u_0(x)},\quad \tau v(x,0)=\tau v_0(x),~~
x\in \Omega\\
 \end{array}\right.\label{1.ssderrfff1}
\end{equation}
with
 $\tau= 1$,
  where,
$\Omega\subset\mathbb{R}^N(N\geq1)$ is a bounded  domain 
 with smooth boundary
$\partial\Omega$ and
$\frac{\partial}{\partial\nu}$ denoted the derivative with respect to the outward normal vector $\nu$ of $\partial\Omega$.
The
kinetic term $f$ describes
cell proliferation and death (simply referred to as growth).
Hence, many efforts have been made first
for the linear chemical production and the logistic source:
\begin{equation}
f(u) = au -\mu  u^2.\label{sffsddggyuusderrsddffftyuu1.1}
\end{equation}

During the past  decade, the Keller--Segel models of type \dref{1.ssderrfff1}
have been studied extensively
by many authors, where the main issue of the investigation is whether the
solutions of the models are bounded or blow-up
 (see e.g., Cie\'{s}lak et al.
 \cite{Cie72,Cie794,Cie791,Cie793}, Burger et al. \cite{Burger2710},
  Calvez and  Carrillo \cite{Calvez710},  Keller and Segel \cite{Keller2710,Keller79},
   Horstmann et al. \cite{Horstmann2710,Horstmann79,Horstmann791},
 Osaki \cite{Osaki79,Osakix391},
    Painter and Hillen \cite{Painter79}, Perthame \cite{Perthame710},
Rascle and Ziti \cite{Rascle710}, Wang et al. \cite{Wang79,Wang76}, Winkler \cite{Winkler555ani710,Winkler79,Winkler37103,Winkler715,Winkler793,Winkler79312vv}, Zheng \cite{Zheng}).
If $\tau = 0$, \dref{1.ssderrfff1} is referred to as simplified parabolic--elliptic
chemotaxis system which is physically relevant when the chemicals diffuse much
faster than cells do.
Tello and  Winkler (\cite{Tello710})  mainly proved that
 that the weak solutions of \dref{1.ssderrfff1} ($\tau = 0$ in \dref{1.ssderrfff1}) exist for arbitrary $\mu > 0$
and that they are smooth and globally bounded if the logistic damping effect satisfies  ${\bf \mu > \frac{(N-2)_+}{N}\chi}$.
However, it is shown by some recent
studies that the nonlinear diffusion
(see Mu et al. \cite{Wang76,Zhenge455678887}) and the (generalized) logistic damping (see Winkler \cite{Winkler715},  Li and  Xiang \cite{LiLittffggsssdddssddxxss}, Zheng \cite{Zheng0}) may prevent the blow-up of solutions.

Turning to the parabolic-parabolic system ($\tau=1$ in \dref{1.ssderrfff1}), for any $\mu> 0,$ it is known, at least, that
all solutions of \dref{1.ssderrfff1} are bounded when $N = 1$ (see Osaki and  Yagi \cite{Osaki79}) or
$N=2$ (see Osaki et al. \cite{Osakix391}).
In light of deriving a
bound for the quantity
$$\Sigma_{k=0}^mb_{k}\int_{\Omega}u^{k}|\nabla v|^{2m-2k}
 $$
with arbitrarily large $m\in \mathbb{N}$ and appropriately constructed positive $b_0, \ldots, b_m$, Winkler (\cite{Winkler37103}) proved that \dref{1.ssderrfff1} admits a unique, smooth and bounded solution if
 $\mu$ is {\bf large enough} and $N\geq1$. However, he did not give the lower bound estimation for the logistic source.
 If  $\Omega\subset \mathbb{R}^N(N\geq1)$ is a smooth bounded {\bf convex} domain, Lankeit (\cite{Lankeitberg79}) proved that \dref{1.ssderrfff1} ($f(u) = au -\mu  u^2$ in \dref{1.ssderrfff1}) admits a global weak solutions for any $\mu > 0$, while if  $a$ is appropriately small and $N = 3,$ the global weak solutions
which eventually become smooth and decay in both components (\cite{Lankeitberg79}).
To the best
of our knowledge, it is yet unclear whether for $\Omega$ is a {\bf non-convex} domain, $N\geq3$   and small values of $\mu > 0$ certain initial data may
enforce finite-time blow-up of solutions.


%

 In this paper, we prove that \dref{1.ssderrfff1}  admits a unique, smooth and bounded solution if the logistic source  $\mu>$ ${\bf\frac{(N-2)_{+}}{N}\chi C^{\frac{1}{\frac{N}{2}+1}}_{\frac{N}{2}+1}}$,
where $C^{\frac{1}{\frac{N}{2}+1}}_{\frac{N}{2}+1}$ is a positive constant which is corresponding to the maximal Sobolev
regularity. This result implies that  the global boundedness of the solution for the  complete parabolic--parabolic and parabolic--elliptic models, which
need a coefficient of the logistic source to keep the same (except a constant $C_{\frac{N}{2}+1}^{\frac{1}{\frac{N}{2}+1}}$).
  Some recent
studies show that  nonlinear diffusion (Xiang \cite{Xiangssdd55672gg}, Viglialoro and Woolley \cite{Viglia2345loroqqqeesserr3344455678887},
Wang et al.  \cite{Wang72}, Winkler \cite{Winklerqqqeesserr3344455678887},
  Zheng  \cite{Zheng33312186,Zhengsssddsseedssddxxss}), or also (generalized) logistic dampening (Lankeit \cite{Lankeit3344455678887}, Nakaguchi and  Osaki  \cite{Nakagussdchi3710},
Viglialoro et al. \cite{Viglialorosserr3344455678887,Viglialoroqqqeesserr3344455678887,Viglialoro3344455678887}, Zheng and Wang \cite{Zhengsssddssddsseedssddxxss}) may
prevent blow-up of solutions.

Going beyond the basic knowledge of above boundedness results, some important
findings were given by many authors which assert that the interaction effects
between cross-diffusion and cell kinetics may result in quite a colorful dynamics
(see e.g. Winkler  et al. \cite{Tao44621,Winkler79312vv,Winkler79312}, Galakhov et al. \cite{Galakhov1230}, Zheng \cite{Zhengsssddswwerrrseedssddxxss}).
For example,
Osaki etal. (\cite{Osakix391,Osaki79,Osaki17102}) studied
 the boundedness and large time behavior of solutions of the model \dref{1.ssderrfff1} on dimension $N\leq2$.
For the parabolic--elliptic case ($\tau=0$ in \dref{1.ssderrfff1}), in \cite{Tello710}, Tello and Winkler proved that  the equilibrium $(1, 1)$ is a global attractor if
$\mu >2\chi$ and $a =\mu$.
While for the parabolic--parabolic case ($\tau=1$ in \dref{1.ssderrfff1}), assume  
the ratio $\frac{\mu}{\chi}$
 is
 sufficiently
 large, Winkler (\cite{Winkler79312}) proved that
 the
 unique
 nontrivial
 spatially
 homogeneous
 equilibrium
 given
 by
 $u=v\equiv \frac{1}{\mu}$ is globally asymptotically stable in the sense that for any choice of suitably regular nonnegative initial data $(u_0,v_0)$
 such that $u_0\not\equiv0$.

Inspired by these researches, the purpose of this paper is to show the global solvability of classical (or weak) solutions to the following problem:
 \begin{equation}
 \left\{\begin{array}{ll}
  u_t=\Delta u-\chi\nabla\cdot(u\nabla v)+au-\mu u^2,\quad
x\in \Omega, t>0,\\
 \disp{ v_t=\Delta v +u- v},\quad
x\in \Omega, t>0,\\
 \disp{\frac{\partial u}{\partial \nu}=\frac{\partial v}{\partial \nu}=0},\quad
x\in \partial\Omega, t>0,\\
\disp{u(x,0)=u_0(x)},\quad  v(x,0)= v_0(x),~~
x\in \Omega.\\
 \end{array}\right.\label{1.1}
\end{equation}
 The main novel lies in the $L^\infty$ estimate of $u$, we use careful analysis,  the variation-of-constants formula and a variation of Maximal Sobolev Regularity
  to  develop some $L^p$-estimate techniques to raise the a priori estimate of solutions from $L^{p_0}(\Omega)$$(p_0>\frac{N}{2})$$\rightarrow$ $L^{p}(\Omega)$(for all $p>1)$ (see Lemmata \ref{lemma45630223116}--\ref{lemma45630223}), and then  combining with the Moser iteration method (see e.g.  Lemma A.1 of \cite{Tao794}), we finally established the $L^\infty$ bound of $u$ (see the proof of  Theorem \ref{theorem3}).

\section{Preliminaries and  main results}

%
%
In order to prove the main results, we first state several elementary
lemmas which will be needed later.
\begin{lemma}(\cite{Hajaiej,Ishida})\label{lemma41ffgg}
Let  $s\geq1$ and $q\geq1$.
Assume that $p >0$ and $a\in(0,1)$ satisfy
$$\frac{1}{2}-\frac{p}{N}=(1-a)\frac{q}{s}+a(\frac{1}{2}-\frac{1}{N})~~\mbox{and}~~p\leq a.$$
Then there exist $c_0, c'_0 >0$ such that for all $u\in W^{1,2}(\Omega)\cap L^{\frac{s}{q}}(\Omega)$,
$$\|u\|_{W^{p,2}(\Omega)} \leq c_{0}\|\nabla u\|_{L^{2}(\Omega)}^{a}\|u\|^{1-a}_{L^{\frac{s}{q}}(\Omega)}+c'_0\|u\|_{L^{\frac{s}{q}}(\Omega)}.$$
\end{lemma}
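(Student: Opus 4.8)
The plan is to read the stated inequality as a fractional Gagliardo--Nirenberg interpolation estimate and to reduce the non-integer order $p$ to purely integer-order quantities, so that no genuinely fractional machinery is needed beyond one Fourier computation. Throughout I write $r:=\frac sq\ge 1$ and use that, on the smooth bounded domain $\Omega$, the norm $\|\cdot\|_{W^{p,2}(\Omega)}$ is equivalent to $\|\cdot\|_{L^2(\Omega)}$ plus the Gagliardo seminorm of order $p\in(0,1)$. First I would fix a Stein-type extension operator $E$ that is bounded simultaneously from $W^{1,2}(\Omega)$, $L^{r}(\Omega)$ and $W^{p,2}(\Omega)$ into the corresponding spaces over $\mathbb{R}^N$ and satisfies $(Eu)|_{\Omega}=u$. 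This reduces everything to a homogeneous inequality on all of $\mathbb{R}^N$, at the cost of lower-order contributions that will be reabsorbed at the very end.

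The core is the homogeneous bound
\begin{equation}
\|(-\Delta)^{p/2}v\|_{L^2(\mathbb{R}^N)}\le C\,\|\nabla v\|_{L^2(\mathbb{R}^N)}^{a}\,\|v\|_{L^{r}(\mathbb{R}^N)}^{1-a},\tag{$\star$}
\end{equation}
which I would prove in two steps that expose exactly where each hypothesis enters. Step A is a Fourier/H\"older computation: since $0<p<1$, Plancherel and H\"older with exponents $\tfrac1p,\tfrac1{1-p}$ give
$$\|(-\Delta)^{p/2}v\|_{L^2}^2=\int_{\mathbb{R}^N}\bigl(|\xi|^2|\widehat v|^2\bigr)^{p}\bigl(|\widehat v|^2\bigr)^{1-p}\,d\xi\le \|\nabla v\|_{L^2}^{2p}\,\|v\|_{L^2}^{2(1-p)},$$
that is $\|(-\Delta)^{p/2}v\|_{L^2}\le\|\nabla v\|_{L^2}^{p}\|v\|_{L^2}^{1-p}$. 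Step B interpolates the remaining $L^2$ factor by the classical integer-order Gagliardo--Nirenberg inequality $\|v\|_{L^2}\le C\|\nabla v\|_{L^2}^{b}\|v\|_{L^{r}}^{1-b}$; dimensional analysis forces $b=\frac{N(1/r-1/2)}{1+N(1/r-1/2)}$, and a short computation from the scaling hypothesis $\tfrac12-\tfrac pN=(1-a)\tfrac1r+a(\tfrac12-\tfrac1N)$ shows that this is exactly $b=\frac{a-p}{1-p}$. Inserting Step B into Step A and adding exponents yields $\|\nabla v\|_{L^2}^{(1-p)b+p}\|v\|_{L^r}^{(1-p)(1-b)}=\|\nabla v\|_{L^2}^{a}\|v\|_{L^r}^{1-a}$, which is $(\star)$. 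Here the hypothesis $p\le a$ is precisely the admissibility condition $b\ge 0$ of Step B; indeed the scaling identity gives $p=a-N(1-a)(\tfrac1r-\tfrac12)$, so $p\le a\Leftrightarrow r\le 2$, which is exactly the range in which $L^2(\mathbb{R}^N)$ lies between $L^{r}$ and the Sobolev image $L^{2^\ast}$ of $\dot W^{1,2}$ (the cases $N=1,2$ being standard with the usual endpoint interpretation).

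To return to $\Omega$ I would apply $(\star)$ to $Ev$. Boundedness of $E$ controls the seminorm part of $\|v\|_{W^{p,2}(\Omega)}$ by $\|Ev\|_{W^{1,2}(\mathbb{R}^N)}^{a}\|v\|_{L^{r}(\Omega)}^{1-a}\le C\bigl(\|\nabla v\|_{L^2(\Omega)}+\|v\|_{L^2(\Omega)}\bigr)^{a}\|v\|_{L^{r}(\Omega)}^{1-a}$, while the $L^2(\Omega)$-part of the norm (and of this bound) is estimated by the inhomogeneous classical Gagliardo--Nirenberg inequality on the bounded domain, which already carries an additive lower-order term. Since every quantity is positively homogeneous of degree one in $v$, any product in which $\|\nabla v\|_{L^2(\Omega)}$ appears with exponent strictly less than $a$ can be absorbed through Young's inequality into $\varepsilon\,\|\nabla v\|_{L^2(\Omega)}^{a}\|v\|_{L^{r}(\Omega)}^{1-a}+C_\varepsilon\|v\|_{L^{r}(\Omega)}$; collecting these produces precisely the additive term $c_0'\|v\|_{L^{r}(\Omega)}$ in the claim.

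I expect the two delicate points to be, first, the genuinely fractional Step A -- the device that trades $(-\Delta)^{p/2}$ for a power of $\|\nabla v\|_{L^2}$ and thereby avoids fractional-order Sobolev technology -- and, second, the passage to the bounded domain, where one must verify that the additive correction enters only to the first power. The latter is guaranteed by the degree-one homogeneity together with the scaling identity, but the bookkeeping in the final absorption step, rather than any individual estimate, is where I would expect errors to creep in.
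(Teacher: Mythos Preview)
The paper does not prove this lemma at all: it is stated with citations to \cite{Hajaiej,Ishida} and used as a black box, so there is no ``paper's own proof'' to compare against. Your proposal therefore goes well beyond what the paper does.

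That said, your argument is sound. The two-step reduction --- first the Plancherel/H\"older identity $\|(-\Delta)^{p/2}v\|_{L^2}\le\|\nabla v\|_{L^2}^{p}\|v\|_{L^2}^{1-p}$, then the integer-order Gagliardo--Nirenberg bound on $\|v\|_{L^2}$ --- is a clean way to manufacture the fractional inequality without any genuine fractional-order machinery, and your exponent bookkeeping is correct: from the scaling hypothesis one indeed gets $(1-a)\bigl(\tfrac1r-\tfrac12\bigr)=\tfrac{a-p}{N}$, hence $b=\tfrac{a-p}{1-p}$ and $p\le a\Leftrightarrow r\le 2$, exactly the admissibility range for Step~B. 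The passage back to $\Omega$ via a Stein extension and the absorption of the lower-order pieces are also fine; for the latter it helps to write $X^{ab}Y^{1-ab}=(X^aY^{1-a})^{b}Y^{1-b}$ and apply Young with exponents $1/b,\,1/(1-b)$, which makes the absorption into $\varepsilon X^aY^{1-a}+C_\varepsilon Y$ transparent. One small point worth stating explicitly is that $p\le a<1$ forces $p\in(0,1)$, so your use of the equivalence between $\|\cdot\|_{W^{p,2}}$ and $\|\cdot\|_{L^2}$ plus the Gagliardo seminorm is justified.
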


\begin{lemma}\label{lemma45xy1222232}(\cite{Cao,Hieber})
Suppose  $\gamma\in (1,+\infty)$ and $g\in L^\gamma((0, T); L^\gamma(
\Omega))$. 
On the other hand,
assuming $v$ is a solution of the following initial boundary value
 \begin{equation}
 \left\{\begin{array}{ll}
v_t -\Delta v+v=g,\\
\disp\frac{\partial v}{\partial \nu}=0,\\
v(x,0)=v_0(x).\\
 \end{array}\right.\label{1.3xcx29}
\end{equation}
Then there exists a positive constant $C_\gamma$ such that if $s_0\in[0,T)$, $v(\cdot,s_0)\in W^{2,\gamma}(\Omega)$ with $\disp\frac{\partial v(\cdot,s_0)}{\partial \nu}=0,$ then
\begin{equation}
\begin{array}{rl}
&\disp{\int_{s_0}^Te^{\gamma s}\|\Delta v(\cdot,t)\|^{\gamma}_{L^{\gamma}(\Omega)}ds}\\
\leq &\disp{C_\gamma\left(\int_{s_0}^Te^{\gamma s}
\|g(\cdot,s)\|^{\gamma}_{L^{\gamma}(\Omega)}ds+
e^{\gamma s_0}(\|v_0(\cdot,s_0)\|^{\gamma}_{L^{\gamma}(\Omega)}+\|\Delta v_0(\cdot,s_0)\|^{\gamma}_{L^{\gamma}(\Omega)})\right).}\\
\end{array}
\label{cz2.5bbv114}
\end{equation}
\end{lemma}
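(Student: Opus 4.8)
\emph{Proof idea.} I would first reduce the statement to the special case $s_0=0$ by a translation in time, and then establish that case through a weight-removing substitution combined with the classical maximal $L^{\gamma}$-regularity of the Neumann heat operator quoted from \cite{Cao,Hieber}. For the reduction, put $\tilde v(\cdot,t):=v(\cdot,t+s_0)$ and $\tilde g(\cdot,t):=g(\cdot,t+s_0)$ for $t\in(0,T-s_0)$; then $\tilde v$ solves \dref{1.3xcx29} on $(0,T-s_0)$ with datum $\tilde v(\cdot,0)=v(\cdot,s_0)\in W^{2,\gamma}(\Omega)$. Applying the $s_0=0$ form of \dref{cz2.5bbv114} to $\tilde v$, substituting $t=s-s_0$, and finally multiplying through by $e^{\gamma s_0}$ converts the plain initial-data terms $\|\tilde v(\cdot,0)\|_{L^\gamma}^\gamma+\|\Delta\tilde v(\cdot,0)\|_{L^\gamma}^\gamma$ into the weighted ones $e^{\gamma s_0}(\|v(\cdot,s_0)\|_{L^\gamma}^\gamma+\|\Delta v(\cdot,s_0)\|_{L^\gamma}^\gamma)$ appearing in \dref{cz2.5bbv114}, so it suffices to treat $s_0=0$.

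For $s_0=0$ the exponential weight and the zero-order term can be eliminated simultaneously by setting $w(\cdot,s):=e^{s}v(\cdot,s)$. Using $v_t=\Delta v-v+g$ one computes
\[
w_t=e^{s}v+e^{s}(\Delta v-v+g)=\Delta w+e^{s}g,\qquad \partial_\nu w=0,\quad w(\cdot,0)=v_0,
\]
so that $w$ solves the \emph{pure} Neumann heat equation with forcing $\tilde g:=e^{s}g$. Since $\Delta w=e^{s}\Delta v$, the desired bound is equivalent to the unweighted maximal-regularity estimate for $w$. Splitting $w=w_1+w_2$ into the part $w_1$ driven by $\tilde g$ with zero initial value and the homogeneous part $w_2$ carrying the datum $v_0$, the contribution of $w_1$ is controlled directly by the maximal $L^{\gamma}$-regularity theorem (Giga--Sohr / Hieber--Pr\"uss, as in \cite{Cao,Hieber}), which provides $\int_0^{T}\|\Delta w_1\|_{L^\gamma(\Omega)}^\gamma\,ds\le C_\gamma\int_0^{T}\|\tilde g\|_{L^\gamma(\Omega)}^\gamma\,ds$ with a constant independent of $T$ (the estimate sees only the mean-zero part of the forcing, on which $-\Delta$ is invertible); undoing the substitution turns this into the forcing term of \dref{cz2.5bbv114}.

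The step I expect to be the main obstacle is the homogeneous part $w_2(\cdot,s)=e^{s\Delta}v_0$, where a bound \emph{uniform in} $T$ is needed. Estimating $\Delta w_2(\cdot,s)=e^{s\Delta}\Delta v_0$ by mere contractivity $\|e^{s\Delta}\cdot\|_{L^\gamma}\le\|\cdot\|_{L^\gamma}$ only yields $\int_0^{T}\|\Delta w_2\|_{L^\gamma}^\gamma\,ds\le T\,\|\Delta v_0\|_{L^\gamma}^\gamma$, whose growth in $T$ is inadmissible. The remedy is to observe that $\Delta v_0$ has vanishing spatial mean (since $\int_\Omega\Delta\varphi=\int_{\partial\Omega}\partial_\nu\varphi=0$ under the Neumann condition), so on the mean-zero subspace the Neumann heat semigroup decays at the spectral-gap rate $\|e^{s\Delta}\phi\|_{L^\gamma}\le c\,e^{-\lambda_1 s}\|\phi\|_{L^\gamma}$, with $\lambda_1>0$ the first nonzero Neumann eigenvalue; this makes $\int_0^{\infty}\|\Delta w_2\|_{L^\gamma}^\gamma\,ds\le C_\gamma\|\Delta v_0\|_{L^\gamma}^\gamma$ finite uniformly in $T$. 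The remaining lower-order term $\|v_0\|_{L^\gamma}^\gamma$ is then a harmless addition accounting for the full trace norm of the datum $v_0\in W^{2,\gamma}(\Omega)$ governing the homogeneous evolution, controlled by $\|v_0\|_{L^\gamma}+\|\Delta v_0\|_{L^\gamma}$ via $L^\gamma$ elliptic regularity for $-\Delta+1$ under Neumann boundary conditions. Adding the estimates for $w_1$ and $w_2$ and recalling the two reductions above gives \dref{cz2.5bbv114}.
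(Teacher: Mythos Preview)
The paper does not supply its own proof of this lemma: it is simply quoted from the references \cite{Cao,Hieber} (see in particular Lemma~2.1 of \cite{Cao}), so there is no in-paper argument to compare against. Your reconstruction is essentially the standard derivation found in those sources: the time-shift reduction to $s_0=0$, the substitution $w=e^{s}v$ that simultaneously kills the $+v$ term and the weight, and the splitting into the forced part handled by Hieber--Pr\"uss maximal regularity plus the homogeneous part controlled via the Neumann spectral gap on mean-zero data. One small remark: your parenthetical that maximal regularity ``sees only the mean-zero part of the forcing'' is unnecessary and slightly misleading---the maximal $L^\gamma$ estimate for $w_1$ holds for arbitrary $\tilde g\in L^\gamma$ without any mean-zero restriction; the mean-zero observation is only needed for the decay of $\Delta w_2$, where you already invoke it correctly.
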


Our first result concerns the global weak existence of solutions and reads as follows.
\begin{theorem}\label{theorem773} Let $\Omega\subset\mathbb{R}^N(N\geq1)$ be a smooth bounded  domain.
Assume
that $u_0\in C^0(\bar{\Omega})$ and $v_0\in W^{1,\theta}(\bar{\Omega})$ (with some $\theta> n$) both are nonnegative.
  If 
$\mu>0$,
%
%
%
%
then it holds that
there exists at least one global weak
solution (in the sense of Definition \ref{df1} below) of problem \dref{1.1}.
\end{theorem}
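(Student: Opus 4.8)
The plan is to establish global existence of a weak solution for arbitrary $\mu>0$ by a standard regularization--uniform-estimate--compactness procedure. First I would introduce a family of approximate problems, replacing the chemotactic sensitivity by a regularized version (for instance replacing $u\nabla v$ by $\tfrac{u}{1+\varepsilon u}\nabla v$, or smoothing the initial data) so that for each $\varepsilon\in(0,1)$ the regularized system admits a unique global \emph{classical} solution $(u_\varepsilon,v_\varepsilon)$ by standard parabolic theory; boundedness of the approximate nonlinearity removes the obstruction to global classical solvability. Throughout I would keep $u_\varepsilon\ge 0$ and $v_\varepsilon\ge 0$ by the maximum principle, which is inherited from nonnegativity of the data.

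The second step is to derive $\varepsilon$-independent a priori bounds. Integrating the first equation gives control of $\int_\Omega u_\varepsilon$ together with a spacetime bound $\mu\int_0^T\!\int_\Omega u_\varepsilon^2$, directly from the logistic absorption term; this is where $\mu>0$ is used and is the one bound available for \emph{all} positive $\mu$. From the $L^2$-in-spacetime control of $u_\varepsilon$ and the maximal Sobolev regularity estimate of Lemma \ref{lemma45xy1222232} applied to the second equation (with $g=u_\varepsilon$), I would extract bounds on $\nabla v_\varepsilon$ and $\Delta v_\varepsilon$ in suitable spacetime Lebesgue spaces. Testing the first equation against $u_\varepsilon$ and absorbing the chemotactic term via these $v_\varepsilon$-bounds, combined with the Gagliardo--Nirenberg inequality of Lemma \ref{lemma41ffgg}, should yield a uniform bound for $u_\varepsilon$ in some reflexive space and for $\nabla u_\varepsilon$ in $L^2$ on bounded time intervals, at least up to the regularity that is compatible with weak solutions. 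The key outputs are: $u_\varepsilon$ bounded in $L^2_{\mathrm{loc}}((0,\infty);H^1(\Omega))$ or at worst in $L^{5/3}$-type spacetime spaces, and $v_\varepsilon$ bounded in $L^2_{\mathrm{loc}}((0,\infty);W^{2,\cdot}(\Omega))$, together with bounds on the time derivatives $\partial_t u_\varepsilon,\partial_t v_\varepsilon$ in negative-order spaces obtained by duality from the PDEs.

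The third step is passage to the limit. Using the time-derivative bounds together with the spatial-regularity bounds, I would invoke an Aubin--Lions--Simon compactness argument to extract a subsequence along which $u_\varepsilon\to u$ and $v_\varepsilon\to v$ strongly in $L^2_{\mathrm{loc}}$ and a.e., with weak convergence of $\nabla u_\varepsilon,\nabla v_\varepsilon$ and $\Delta v_\varepsilon$. Strong $L^2$ convergence of $u_\varepsilon$ and $\nabla v_\varepsilon$ lets me identify the limit of the problematic product $u_\varepsilon\nabla v_\varepsilon\to u\nabla v$ and of the quadratic absorption $\mu u_\varepsilon^2\to \mu u^2$; a.e.\ convergence plus the uniform integrability coming from the spacetime bounds handles the nonlinear terms. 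I would then verify that $(u,v)$ satisfies the integral (weak) identities in the sense of Definition \ref{df1} and attains the initial data.

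The main obstacle will be passing to the limit in the chemotactic term $u_\varepsilon\nabla v_\varepsilon$: for general $\mu>0$ one cannot expect $L^\infty$ control, so I must combine strong convergence of one factor with weak convergence of the other. Concretely, the product converges provided $u_\varepsilon\to u$ strongly in $L^2_{\mathrm{loc}}$ while $\nabla v_\varepsilon\rightharpoonup\nabla v$ weakly in $L^2_{\mathrm{loc}}$ (or a Hölder-conjugate pairing of exponents), and securing exactly the strong compactness of $u_\varepsilon$ in the right space is the delicate point, since the only robust dissipation is the logistic $L^2$ bound. Balancing the Gagliardo--Nirenberg exponents in Lemma \ref{lemma41ffgg} against the maximal-regularity gain in Lemma \ref{lemma45xy1222232} so that the product of exponents is admissible is the crux; the dimension $N$ enters here, and for large $N$ one may only obtain a \emph{very weak} solution, which is consistent with the theorem claiming merely a weak solution rather than a classical one when $\mu$ is not large.
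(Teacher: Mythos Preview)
Your overall architecture --- regularize via $F_\varepsilon(s)=(1+\varepsilon s)^{-1}$, get global classical approximants, derive $\varepsilon$-uniform bounds, apply Aubin--Lions, pass to the limit --- matches the paper exactly. The gap is in the second step: the specific test you propose does not close in general dimension, and you are missing the one estimate that makes the whole scheme dimension-independent.

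Testing the first equation against $u_\varepsilon$ produces, after integrating the cross term by parts, a contribution controlled by $\chi\int_\Omega u_\varepsilon^2|\Delta v_\varepsilon|$. Absorbing half of this into the logistic damping $\mu\int u_\varepsilon^3$ via Young leaves $C\int|\Delta v_\varepsilon|^3$, and maximal regularity only gives $\Delta v_\varepsilon\in L^2$ in spacetime from $u_\varepsilon\in L^2$ in spacetime; there is no $L^3$ bound to feed back, so the loop does not close for $N\ge 3$. Your hedging toward ``$L^{5/3}$-type spaces'' and ``very weak solutions'' signals you sensed this, but Definition~\ref{df1} genuinely requires $\nabla u\in L^1_{\mathrm{loc}}$, so you must produce an honest gradient bound.

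The paper's remedy is the \emph{entropy} test: multiply the $u_\varepsilon$-equation by $\ln u_\varepsilon$. The cross term becomes $\chi\int F_\varepsilon(u_\varepsilon)\nabla u_\varepsilon\cdot\nabla v_\varepsilon=-\chi\int\big(\int_0^{u_\varepsilon}F_\varepsilon\big)\Delta v_\varepsilon\le \chi\int u_\varepsilon|\Delta v_\varepsilon|$, which by Young is bounded by $\tfrac{\mu}{4}\int u_\varepsilon^2+C\int|\Delta v_\varepsilon|^2$ --- and \emph{both} of these are already controlled in spacetime by the mass/logistic bound and by testing the $v$-equation with $-\Delta v_\varepsilon$. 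This yields, uniformly in $\varepsilon$ and in all dimensions, (i) $\int_0^T\!\int_\Omega |\nabla u_\varepsilon|^2/u_\varepsilon\le C$, whence by H\"older with the $L^2$ spacetime bound $\int_0^T\!\int_\Omega|\nabla u_\varepsilon|^{4/3}\le C$, and (ii) $\int_0^T\!\int_\Omega u_\varepsilon^2\ln(u_\varepsilon+1)\le C$, which supplies the uniform integrability needed to pass to the limit in $\mu u_\varepsilon^2$. Aubin--Lions then gives $u_\varepsilon\to u$ strongly in $L^{4/3}_{\mathrm{loc}}$ (not $L^2$), and the paper identifies the limit of $u_\varepsilon F_\varepsilon(u_\varepsilon)\nabla v_\varepsilon$ via a.e.\ convergence plus the uniform $L^1$ bound $\int\!\int u_\varepsilon|\nabla v_\varepsilon|\le(\int\!\int u_\varepsilon^2)^{1/2}(\int\!\int|\nabla v_\varepsilon|^2)^{1/2}$ and Egorov, rather than a strong$\times$weak $L^2$ pairing. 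Once you insert the $\ln u_\varepsilon$ estimate, the rest of your outline goes through essentially as written.
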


\begin{remark} We  remove the convexity of $\Omega$
 required in \cite{Lankeitberg79}.
\end{remark}

Moreover, if in addition we assume that $\mu>\frac{(N-2)_+}{N}\chi C_{\frac{N}{2}+1}^{\frac{1}{\frac{N}{2}+1}}$, then our solutions will actually be bounded and smooth
and hence classical:
\begin{theorem}\label{theorem3} Let $\Omega\subset\mathbb{R}^N(N\geq1)$ be a smooth bounded  domain. Assume
that $u_0\in C^0(\bar{\Omega})$ and $v_0\in W^{1,\theta}(\bar{\Omega})$ (with some $\theta> n$) both are nonnegative.
  If 
$\mu>\frac{(N-2)_+}{N}\chi C_{\frac{N}{2}+1}^{\frac{1}{\frac{N}{2}+1}}$,
%
%
%
%
then \dref{1.1}
possesses a unique classical solution $(u,v)$ which is globally bounded in  $\Omega\times(0,\infty)$.
\end{theorem}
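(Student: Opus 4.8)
The plan is to combine standard local well-posedness with a priori $L^p$-estimates that are rendered uniform in time by playing the logistic dissipation off against the chemotactic cross-term through the maximal Sobolev regularity of Lemma \ref{lemma45xy1222232}. First I would invoke Amann's theory to produce a unique classical solution $(u,v)$ on a maximal interval $[0,T_{\max})$, nonnegative by the maximum principle, together with the extensibility criterion that $T_{\max}<\infty$ forces $\limsup_{t\uparrow T_{\max}}\|u(\cdot,t)\|_{L^\infty(\Omega)}=\infty$. Integrating the first equation over $\Omega$ and using $\int_\Omega u^2\ge|\Omega|^{-1}(\int_\Omega u)^2$ yields a differential inequality for $\|u(\cdot,t)\|_{L^1(\Omega)}$ and hence a uniform $L^1$-bound. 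Everything then reduces to establishing $\sup_{t<T_{\max}}\|u(\cdot,t)\|_{L^\infty(\Omega)}<\infty$.

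The decisive step is the first $L^{p_0}$-bound, and this is where I expect the main obstacle to lie. Testing the $u$-equation with $p u^{p-1}$ and integrating the drift term by parts produces the term $-\chi(p-1)\int_\Omega u^p\Delta v$ on the right, balanced on the left against the gradient dissipation and, crucially, the logistic term $\mu p\int_\Omega u^{p+1}$. Because in the fully parabolic setting $\Delta v$ cannot be replaced by $v-u$ as in the parabolic--elliptic case, I would not attempt a pointwise bound on $\Delta v$. Instead, I would single out the critical exponent $p_0$ with $p_0+1=\gamma:=\tfrac N2+1$, estimate by H\"older as $\int_\Omega u^{p_0}|\Delta v|\le(\int_\Omega u^{p_0+1})^{1/\gamma'}(\int_\Omega|\Delta v|^{\gamma})^{1/\gamma}$ where $\gamma'=\tfrac{\gamma}{\gamma-1}=\tfrac{p_0+1}{p_0}$, multiply the resulting differential inequality by an exponential weight, integrate in time, and split the cross-term by a weighted Young inequality with a free parameter. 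Applying Lemma \ref{lemma45xy1222232} with $g=u$ then trades $\int\!\int e^{\gamma s}|\Delta v|^{\gamma}$ for $\int\!\int e^{\gamma s}u^{\gamma}$ at the cost of the constant $C_\gamma=C_{\frac N2+1}$.

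Optimizing the Young parameter collapses the combined coefficient of the spacetime integral $\int\!\int e^{\gamma s}u^{p_0+1}$ on the right to exactly $\chi(p_0-1)C_\gamma^{1/\gamma}$, which the logistic contribution $\mu p_0\int\!\int e^{\gamma s}u^{p_0+1}$ on the left absorbs precisely when $\mu>\tfrac{p_0-1}{p_0}\chi C_{\frac N2+1}^{1/(\frac N2+1)}$. As $p_0\downarrow\tfrac N2$ the factor $\tfrac{p_0-1}{p_0}$ tends to $\tfrac{N-2}{N}$, so the hypothesis $\mu>\tfrac{(N-2)_+}{N}\chi C_{\frac N2+1}^{1/(\frac N2+1)}$ is exactly the threshold at the critical exponent; since the inequality is strict, continuity of $C_{p_0+1}$ leaves room to fix some admissible $p_0>\tfrac N2$. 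This gives a uniform-in-time bound $\sup_{t<T_{\max}}\|u(\cdot,t)\|_{L^{p_0}(\Omega)}<\infty$ with $p_0>\tfrac N2$. The delicacy here---matching $\gamma=\tfrac N2+1$, optimizing the Young parameter to produce the sharp constant, and circumventing the borderline exponent via strictness---is the crux of the whole argument.

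Finally I would bootstrap. With $u\in L^\infty((0,T_{\max});L^{p_0}(\Omega))$ and $p_0>\tfrac N2$, smoothing estimates for the Neumann heat semigroup applied to the $v$-equation yield $\nabla v\in L^\infty((0,T_{\max});L^\infty(\Omega))$; a further round of $u^{p-1}$-testing, again driven by the logistic absorption (the content of Lemmata \ref{lemma45630223116}--\ref{lemma45630223}), then upgrades this to $\sup_{t<T_{\max}}\|u(\cdot,t)\|_{L^p(\Omega)}<\infty$ for every finite $p$. A Moser-type iteration (Lemma A.1 of \cite{Tao794}) converts these bounds into $\sup_{t<T_{\max}}\|u(\cdot,t)\|_{L^\infty(\Omega)}<\infty$, which contradicts the extensibility criterion unless $T_{\max}=\infty$. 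Hence the solution is global, bounded and classical, with uniqueness inherited from the local theory; for $N\le2$ the condition reduces to $\mu>0$, consistent with the known boundedness results.
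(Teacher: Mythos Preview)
Your proposal is correct and follows essentially the same route as the paper: test the $u$-equation by $u^{r-1}$, rewrite the cross-diffusion term as $\frac{r-1}{r}\chi\int_\Omega u^{r}|\Delta v|$, split it via Young's inequality with a free parameter, apply the variation-of-constants formula together with the maximal Sobolev regularity of Lemma~\ref{lemma45xy1222232}, and optimize the parameter (the paper packages this optimization as the algebraic Lemma~\ref{lemma45630223116}) to obtain the sharp coefficient $\frac{r-1}{r}\chi C_{r+1}^{1/(r+1)}$, after which strictness of the hypothesis allows the choice $r=q_0>\frac{N}{2}$ and the bootstrap to $L^\infty$ via semigroup estimates and Moser iteration. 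One small slip worth flagging: from $u\in L^\infty_tL^{p_0}_x$ with merely $p_0>\frac{N}{2}$ the heat-semigroup smoothing gives $\nabla v\in L^\infty_tL^q_x$ only for finite $q<\frac{Np_0}{(N-p_0)^+}$, not $q=\infty$, so the paper (inside Lemma~\ref{lemma45630223}) inserts one intermediate Gagliardo--Nirenberg step to reach $u\in L^p$ for all $p$ before concluding $\nabla v\in L^\infty$; your outline absorbs this into ``a further round of $u^{p-1}$-testing,'' which is fine provided you do not assume $\nabla v\in L^\infty$ at that stage.
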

\begin{remark}


%


 (i) Theorem \ref{theorem3}  extends the results of   Winkler (\cite{Winkler37103}), who proved the possibility of boundness, in the cases
 $\mu>0$ is sufficiently large, and with $\Omega\subset \mathbb{R}^N$ is a
convex bounded domains.

 (ii) Theorem \ref{theorem3} asserts that, as in the corresponding two-dimensional Keller-Segel system
(see Osaki et al. \cite{Osakix391}), even arbitrarily small quadratic degradation of cells (for any $\mu>0$) is sufficient to rule out blow-up and rather
ensure boundedness of solutions.

(iii) 
%
From Theorem \ref{theorem3}, we derive that for the complete parabolic--parabolic and parabolic--elliptic models, the global boundedness of the solutions need the coefficient of the logistic source  keep the same (which differs from a constant $C_{\frac{N}{2}+1}^{\frac{1}{\frac{N}{2}+1}}$).

\end{remark}

\begin{theorem}\label{theoremfgb3}  Let $\Omega\subset\mathbb{R}^N(N\geq1)$ be a smooth bounded  domain.
Let
 $a= 0$, and suppose that $\mu>\frac{(N-2)_+}{N}\chi C_{\frac{N}{2}+1}^{\frac{1}{\frac{N}{2}+1}}$.
  Then as long as $u_0\in C^0(\bar{\Omega})$ and $v_0\in W^{1,\theta}(\bar{\Omega})$ (with some $\theta> n$) both are nonnegative, the global bounded solution $(u,v)$
 constructed in Theorem \ref{theorem3} satisfies
\begin{equation}
\|u(\cdot,t)\|_{L^\infty(\Omega)}\rightarrow0,~~\|v(\cdot,t)\|_{L^\infty(\Omega)}\rightarrow0
\label{ggbbdffff1.1fghyuisdakkklll}
\end{equation}
as $t\rightarrow\infty$.
\end{theorem}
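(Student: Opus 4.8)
The plan is to first establish decay of $u$ in $L^1(\Omega)$, then bootstrap to $L^\infty(\Omega)$ using the time-independent bounds already furnished by Theorem \ref{theorem3}, and finally transfer the decay to $v$ through the second equation. The starting point exploits crucially that $a=0$: integrating the first equation of \dref{1.1} over $\Omega$ and using the Neumann boundary conditions to annihilate the diffusion and chemotaxis terms yields
$$\frac{d}{dt}\int_\Omega u=-\mu\int_\Omega u^2\le-\frac{\mu}{|\Omega|}\Big(\int_\Omega u\Big)^2,$$
the last step being the Cauchy--Schwarz inequality. Writing $y(t)=\int_\Omega u(\cdot,t)$, this is a Riccati differential inequality $y'\le-\tfrac{\mu}{|\Omega|}y^2$, whose integration gives $y(t)\le\big(y(0)^{-1}+\tfrac{\mu}{|\Omega|}t\big)^{-1}$. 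Hence $\|u(\cdot,t)\|_{L^1(\Omega)}\to0$ (indeed at rate $O(1/t)$) as $t\to\infty$.

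Next I would upgrade this to decay in every $L^p(\Omega)$. Since Theorem \ref{theorem3} supplies a time-independent bound $\|u(\cdot,t)\|_{L^\infty(\Omega)}\le M$, the elementary interpolation $\|u\|_{L^p}^p\le\|u\|_{L^\infty}^{p-1}\|u\|_{L^1}$ forces $\|u(\cdot,t)\|_{L^p(\Omega)}\to0$ for each finite $p$. To pass from $L^p$ to $L^\infty$ decay, the approach is to invoke the uniform eventual regularity of the solution: because $(u,v)$ and $\nabla v$ are bounded uniformly in $t$, standard parabolic $L^p$ and Schauder estimates applied to the time-shifted problems on $[t,t+2]$ yield a H\"older bound $\|u(\cdot,t)\|_{C^\alpha(\bar\Omega)}\le C$ for all $t\ge1$, with $C$ and $\alpha$ independent of $t$. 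This equicontinuity is incompatible with non-decay: if $\|u(\cdot,t_k)\|_{L^\infty}\ge\varepsilon$ along some $t_k\to\infty$, the uniform H\"older continuity would force $u(\cdot,t_k)\ge\varepsilon/2$ on a ball of fixed radius, contradicting $\|u(\cdot,t_k)\|_{L^1}\to0$. Therefore $\|u(\cdot,t)\|_{L^\infty(\Omega)}\to0$.

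Finally, the decay of $v$ follows from that of $u$. Integrating the second equation gives $\tfrac{d}{dt}\int_\Omega v=-\int_\Omega v+\int_\Omega u$, a linear ODE with forcing $\int_\Omega u\to0$, so $\|v(\cdot,t)\|_{L^1(\Omega)}\to0$ by Duhamel's formula; alternatively one may use the variation-of-constants representation $v(t)=e^{t(\Delta-1)}v_0+\int_0^t e^{(t-s)(\Delta-1)}u(s)\,ds$ together with the smoothing estimates of the Neumann heat semigroup and the $L^p$ decay of $u$. Combining $\|v(\cdot,t)\|_{L^1(\Omega)}\to0$ with the uniform H\"older bound for $v$ from the same parabolic regularity and repeating the equicontinuity argument yields $\|v(\cdot,t)\|_{L^\infty(\Omega)}\to0$, which completes the proof.

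The main obstacle is the second step, namely securing the \emph{time-independent} H\"older (or uniform-continuity) estimate that converts integral decay into uniform decay. One must verify that the a priori bounds of Theorem \ref{theorem3} are genuinely uniform in $t$ and that the chemotactic coupling does not let the regularity constants deteriorate as $t\to\infty$; this is precisely where the uniform-in-time control of $\nabla v$ emerging from the maximal Sobolev regularity estimate of Lemma \ref{lemma45xy1222232} is essential.
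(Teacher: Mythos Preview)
Your proposal is correct and follows essentially the same route as the paper: $L^1$ decay of $u$ via the Riccati inequality (Lemma \ref{fghfbglemma4563025xxhjklojjkkkgyhuissddff}), $L^1$ decay of $v$ from the linear ODE for $\int_\Omega v$ (Lemma \ref{fhhghfbglemma4563025xxhjklojjkkkgyhuissddff}), a time-uniform H\"older bound from parabolic regularity (Lemma \ref{ffgghhfhhghfbglemma4563025xxhjklojjkkkgyhuissddff}), and a compactness/equicontinuity contradiction to upgrade to $L^\infty$ decay. The only cosmetic difference is that the paper phrases the last step via Arzel\`a--Ascoli and identification of the limit, whereas you use the equivalent ``fixed-radius ball'' argument directly.
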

\begin{remark}We find that if (the coefficient
 of  logistic source) $\mu>\frac{(N-2)_+}{N}\chi C_{\frac{N}{2}+1}^{\frac{1}{\frac{N}{2}+1}},$  then Theorem \ref{theoremfgb3} holds
for any $N\geq1$, hence in this paper, we drop the hypothesis of dimension $N=3$ which is required by Theorems 1.3--1.4 of  \cite{Lankeitberg79}.


\end{remark}

In order to discuss  the global weak
solution  for any  
{\bf $\mu>0$} (see the proof of Lemma \ref{kkklemmaghjmk4563025xxhjklojjkkk}),
  we need to consider an appropriately  approximated system
    of \dref{1.1} at first.
 Indeed, 
the corresponding approximated problem is introduced as follows:
\begin{equation}
 \left\{\begin{array}{ll}
  u_{\varepsilon t}=\Delta u_{\varepsilon}-\chi\nabla\cdot(u_{\varepsilon}F_{\varepsilon}(u_{\varepsilon})\nabla v_{\varepsilon})+ u_{\varepsilon}(a-\mu u_{\varepsilon}),\quad
x\in \Omega, t>0,\\
 \disp{v_{\varepsilon t}=\Delta v_{\varepsilon} +u_{\varepsilon}- v_{\varepsilon}},\quad
x\in \Omega, t>0,\\
 \disp{\frac{\partial u_{\varepsilon}}{\partial \nu}=\frac{\partial v_{\varepsilon}}{\partial \nu}=0},\quad
x\in \partial\Omega, t>0,\\
\disp{u_{\varepsilon}(x,0)=u_0(x)},v_{\varepsilon}(x,0)=v_0(x),\quad
x\in \Omega,\\
 \end{array}\right.\label{1.1fghyuisda}
\end{equation}
where
\begin{equation}
F_{\varepsilon}(s)=\frac{1}{1+\varepsilon s}~~\mbox{for all}~~s \geq 0~~\mbox{and}~~\varepsilon> 0.
\label{1.ffggvbbnxxccvvn1}
\end{equation}
%

The following local existence result is rather standard, since a similar reasoning in
 \cite{Cao,Cie72,Wang79,Wang76,Wang72,Zheng0}.
 We omit it here.
\begin{lemma}\label{lemma70}Let $\Omega\subset\mathbb{R}^N(N\geq1)$ be a smooth bounded  domain.
Assume that the nonnegative functions $u_0\in C^{0}(\bar{\Omega})$ and $v_0\in W^{1,\theta}(\bar{\Omega})$ (with some $\theta> N$).
Then there exist a
maximal $T_{max,\varepsilon}\in(0,\infty]$ and a uniquely determined pair $(u_{\varepsilon},v_{\varepsilon})$ of nonnegative functions
$$
 \left\{\begin{array}{ll}
 u_{\varepsilon}\in C^0(\bar{\Omega}\times[0,T_{max,\varepsilon}))\cap C^{2,1}(\bar{\Omega}\times(0,T_{max,\varepsilon})),\\
  v_{\varepsilon}\in  C^0(\bar{\Omega}\times[0,T_{max,\varepsilon}))\cap C^{2,1}(\bar{\Omega}\times(0,T_{max,\varepsilon}))\cap
   L^\infty_{loc}((0,T_{max,\varepsilon}); W^{1,\theta}(\Omega))\\
   \end{array}\right.
   $$
that solve \dref{1.1fghyuisda} in the classical sense in
 $\Omega\times(0, T_{max,\varepsilon})$. 
%
%
%
Moreover, if  $T_{max,\varepsilon}<+\infty$, then
\begin{equation}
\|u_{\varepsilon}(\cdot, t)\|_{L^\infty(\Omega)}\rightarrow\infty~~ \mbox{as}~~ t\nearrow T_{max,\varepsilon}
\label{1.163072x}
\end{equation}
is fulfilled.
\end{lemma}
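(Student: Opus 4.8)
The plan is to prove Lemma \ref{lemma70} by a standard contraction-mapping argument built on the variation-of-constants representation of \dref{1.1fghyuisda} together with the smoothing estimates of the Neumann heat semigroup $(e^{t\Delta})_{t\ge0}$ on $\Omega$. I would first exploit the regularizing effect of \dref{1.ffggvbbnxxccvvn1}: since $F_\varepsilon(s)=1/(1+\varepsilon s)$, the flux coefficient satisfies $0\le sF_\varepsilon(s)=s/(1+\varepsilon s)\le 1/\varepsilon$ for all $s\ge0$, and $s\mapsto sF_\varepsilon(s)$ is smooth with globally bounded derivative. Thus, for each fixed $\varepsilon>0$, the cross-diffusion nonlinearity is genuinely bounded and globally Lipschitz, which is precisely what removes the difficulty one would otherwise face with the unregularized chemotaxis term.

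Concretely, for $T\in(0,1)$ and a suitable radius $R>0$ I would introduce the closed set
$$ \mathcal{S}=\Big\{(u,v)\in C^0\big(\bar\Omega\times[0,T]\big)^2:\ \|u\|_{L^\infty(\Omega\times(0,T))}\le R,\ \|\nabla v\|_{L^\infty(\Omega\times(0,T))}\le R\Big\}, $$
and define a map $\Phi=(\Phi_1,\Phi_2)$ on $\mathcal{S}$ through
$$ \Phi_1(u,v)(t)=e^{t\Delta}u_0+\int_0^t e^{(t-s)\Delta}\Big[-\chi\nabla\cdot\big(uF_\varepsilon(u)\nabla v\big)+u(a-\mu u)\Big]\,ds, $$
$$ \Phi_2(u,v)(t)=e^{t(\Delta-1)}v_0+\int_0^t e^{(t-s)(\Delta-1)}u(s)\,ds. $$
A fixed point of $\Phi$ is a mild solution of \dref{1.1fghyuisda}. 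The key step is to show that $\Phi$ maps $\mathcal{S}$ into itself and is a contraction once $T$ is small enough. For this I would invoke the standard $L^p$--$L^q$ smoothing bounds, in particular $\|e^{t\Delta}\nabla\cdot w\|_{L^\infty}\le C\,t^{-\frac12-\frac{N}{2q}}\|w\|_{L^q}$ and $\|\nabla e^{t(\Delta-1)}w\|_{L^\infty}\le C\,t^{-\frac12-\frac{N}{2q}}e^{-t}\|w\|_{L^q}$ with $q>N$, so that the time-singularities are integrable; here $v_0\in W^{1,\theta}(\Omega)$ with $\theta>N$ guarantees $\nabla v_0\in L^\infty$ via Sobolev embedding and supplies the correct initial regularity for the $\nabla v$-component. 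The boundedness and Lipschitz continuity of $sF_\varepsilon(s)$ then make both the self-mapping and the contraction estimates routine, yielding a unique mild solution on a maximal interval $[0,T_{max,\varepsilon})$.

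Once the mild solution is in hand, I would upgrade it to a classical $C^{2,1}$ solution by a parabolic bootstrap: feeding $u\in C^0$ and $\nabla v\in L^\infty$ into the linear $v$-equation gives, by maximal $L^p$-regularity and Schauder theory, H\"older continuity of $\nabla v$, which in turn renders the drift term in the $u$-equation H\"older continuous and hence yields $u\in C^{2,1}$; iterating closes the regularity. Nonnegativity of both components follows from the maximum principle: $v\ge0$ because $u\ge0$ acts as a nonnegative source in $v_t=\Delta v+u-v$, and $u\ge0$ because the reaction $u(a-\mu u)$ and the transport term both vanish on $\{u=0\}$, so $\underline u\equiv0$ is a subsolution. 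Finally, the extensibility criterion \dref{1.163072x} is the usual blow-up alternative: the length of the existence interval produced by the contraction argument can be bounded below in terms of $\|u_\varepsilon(\cdot,t_0)\|_{L^\infty}$ and $\|v_\varepsilon(\cdot,t_0)\|_{W^{1,\theta}}$ alone, so if $T_{max,\varepsilon}<\infty$ while $\|u_\varepsilon(\cdot,t)\|_{L^\infty}$ stayed bounded, one could restart the solution past $T_{max,\varepsilon}$, a contradiction. The only genuinely delicate point is the treatment of the divergence-form chemotaxis term in the fixed-point estimates—controlling $\nabla v$ uniformly in time—but the uniform bound $sF_\varepsilon(s)\le1/\varepsilon$ coming from \dref{1.ffggvbbnxxccvvn1} defuses exactly this difficulty, which is why the result is, as the authors note, essentially standard. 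An equally valid route would be to appeal to Amann's theory for triangular quasilinear parabolic systems, which delivers local well-posedness and the same extensibility criterion directly.
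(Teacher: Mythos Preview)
The paper does not give its own proof of this lemma: immediately preceding the statement it says ``The following local existence result is rather standard, since a similar reasoning in \cite{Cao,Cie72,Wang79,Wang76,Wang72,Zheng0}. We omit it here.'' Your proposal is therefore not competing against a proof in the paper but rather supplying what the authors deferred to the literature, and the contraction-mapping/semigroup scheme you outline is exactly the standard argument found in those references (in particular the approach going back to Horstmann--Winkler \cite{Horstmann791} and its adaptations in \cite{Winkler37103,Cao}).

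Your sketch is correct in substance. One point you leave implicit deserves to be made explicit for the extensibility criterion \dref{1.163072x}: you correctly observe that the local existence time depends on both $\|u_\varepsilon(\cdot,t_0)\|_{L^\infty}$ and $\|v_\varepsilon(\cdot,t_0)\|_{W^{1,\theta}}$, but the stated blow-up alternative involves only $\|u_\varepsilon\|_{L^\infty}$. To close this you must note that a uniform bound on $\|u_\varepsilon(\cdot,t)\|_{L^\infty}$ on $(0,T_{max,\varepsilon})$ automatically yields a bound on $\|v_\varepsilon(\cdot,t)\|_{W^{1,\theta}}$ via the Duhamel formula for the second equation in \dref{1.1fghyuisda} and the same semigroup smoothing estimates you already invoke; this is routine but should be said. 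With that addition your argument is complete and matches the standard treatment the paper cites.
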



\begin{definition} \label{df1} 
We call $(u, v)$ a global weak solution of \dref{1.1} if
\begin{equation}
 \left\{\begin{array}{ll}
   u\in L_{loc}^1(\bar{\Omega}\times[0,\infty)),\\
    v \in  L_{loc}^1([0,\infty);W^{1,1}(\Omega)),\\
 \end{array}\right.\label{dffff1.1fghyuisdakkklll}
\end{equation}
such that  $u\geq 0$ and $v\geq 0$ a.e. in
$\Omega\times(0, \infty)$,
\begin{equation}\label{726291hh}
\begin{array}{rl}
&\nabla u~~ \mbox{and}~~~~~u\nabla v~~ \mbox{belong to}~~
L^1_{loc}(\bar{\Omega}\times [0, \infty)),\\
\end{array}
\end{equation}
 and that
\begin{equation}
\begin{array}{rl}\label{eqx45xx12112ccgghh}
\disp{-\int_0^{T}\int_{\Omega}u\varphi_t-\int_{\Omega}u_0\varphi(\cdot,0)  }=&\disp{-
\int_0^T\int_{\Omega}\nabla u\cdot\nabla\varphi+\chi\int_0^T\int_{\Omega}u
\nabla v\cdot\nabla\varphi}\\
&\disp{+\int_0^T\int_{\Omega} u(a-\mu u)\varphi}\\
\end{array}
\end{equation}
for any $\varphi\in C_0^{\infty} (\bar{\Omega}\times[0, \infty))$
  as well as
  \begin{equation}
\begin{array}{rl}\label{eqx45xx12112ccgghhjj}
\disp{-\int_0^{T}\int_{\Omega}v\varphi_t-\int_{\Omega}v_0\varphi(\cdot,0)  }=&\disp{-
\int_0^T\int_{\Omega}\nabla v\cdot\nabla\varphi-\int_0^T\int_{\Omega}(v-u)\varphi.}\\
\end{array}
\end{equation}
\end{definition}
\section{The global weak
solution of \dref{1.1}}

In this section, we are going to establish an iteration step to develop the main ingredient of our result.
The iteration depends on a series of a-priori estimates.
To this end, we first show the following Lemma,
which is presented below for the sake of completeness and easy reference (see also  Lemma 2.1 of \cite{Winkler37103}).
\begin{lemma}\label{wsdelemma45}
Under the assumptions in 
 Lemma  \ref{lemma70}, we derive that
there exists a positive constant 
$C$ independent of $\varepsilon$
such that the solution of \dref{1.1fghyuisda} satisfies
%
%
\begin{equation}
\int_{\Omega}{u_{\varepsilon}(x,t)}+\int_{\Omega} {v^2_{\varepsilon}}(x,t)+\int_{\Omega}|\nabla {v_{\varepsilon}}(x,t)|^2 \leq C~~\mbox{for all}~~ t\in(0, T_{max,\varepsilon})
\label{cz2.5ghju48cfg924ghyuji}
\end{equation}
and
\begin{equation}
\int_t^{t+\tau}\int_{\Omega}[|\nabla {v_{\varepsilon}}|^2+u^2_{\varepsilon}+ |\Delta {v_{\varepsilon}}|^2]\leq C~~\mbox{for all}~~ t\in(0, T_{max,\varepsilon}-\tau)
\label{223455cz2.5ghjddffgggu48cfg924ghyuji}
\end{equation}
with
\begin{equation}
\tau:=\min\{1,\frac{1}{6}T_{max,\varepsilon}\}.
\label{223455cz2.5seerttyyuughjddffgggu48cfg924ghyuji}
\end{equation}
Moreover,
for each $T\in(0, T_{max,\varepsilon})$, one can find a constant $C > 0$ independent of $\varepsilon$ such that
\begin{equation}
\begin{array}{rl}
&\disp{\int_{0}^T\int_{\Omega}[|\nabla {v_{\varepsilon}}|^2+u^2_{\varepsilon}+ |\Delta {v_{\varepsilon}}|^2]\leq C.}\\
\end{array}
\label{bnmbncz2.5ghhjuyuivvbssdddeennihjj}
\end{equation}
\end{lemma}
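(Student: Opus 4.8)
The plan is to derive the basic energy-type estimates for the approximate system \dref{1.1fghyuisda} in three linked steps, exploiting the mass control coming from the logistic term together with the regularizing effect of the heat semigroup in the $v$-equation. First I would integrate the first equation of \dref{1.1fghyuisda} over $\Omega$; since $\int_\Omega \nabla\cdot(u_\varepsilon F_\varepsilon(u_\varepsilon)\nabla v_\varepsilon)=0$ by the no-flux boundary condition, this yields $\frac{d}{dt}\int_\Omega u_\varepsilon = a\int_\Omega u_\varepsilon - \mu\int_\Omega u_\varepsilon^2$. Applying the Cauchy--Schwarz inequality in the form $\left(\int_\Omega u_\varepsilon\right)^2 \le |\Omega|\int_\Omega u_\varepsilon^2$ gives an absorbing ODE $\frac{d}{dt}\int_\Omega u_\varepsilon \le a\int_\Omega u_\varepsilon - \frac{\mu}{|\Omega|}\left(\int_\Omega u_\varepsilon\right)^2$, whose solutions stay bounded uniformly in $\varepsilon$; this secures the $\int_\Omega u_\varepsilon(x,t)$ part of \dref{cz2.5ghju48cfg924ghyuji}.

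Next I would test the second equation with $v_\varepsilon$ to control $\int_\Omega v_\varepsilon^2$ and $\int_\Omega|\nabla v_\varepsilon|^2$ simultaneously. Multiplying $v_{\varepsilon t}=\Delta v_\varepsilon+u_\varepsilon-v_\varepsilon$ by $v_\varepsilon$ and integrating by parts produces
\begin{equation}
\frac{1}{2}\frac{d}{dt}\int_\Omega v_\varepsilon^2 + \int_\Omega|\nabla v_\varepsilon|^2 + \int_\Omega v_\varepsilon^2 = \int_\Omega u_\varepsilon v_\varepsilon.
\label{planeq1}
\end{equation}
The right-hand side is the coupling term, and the main obstacle is that at this stage I only control $\int_\Omega u_\varepsilon$ in $L^1$, which does not immediately dominate $\int_\Omega u_\varepsilon v_\varepsilon$. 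To close the estimate I would instead test with $-\Delta v_\varepsilon$ (or equivalently work at the level of $\int_\Omega|\nabla v_\varepsilon|^2$), integrating by parts to obtain a differential inequality for $\int_\Omega|\nabla v_\varepsilon|^2$ featuring $\int_\Omega|\Delta v_\varepsilon|^2$ on the good side and $\int_\Omega u_\varepsilon^2$ on the forcing side; a Gagliardo--Nirenberg interpolation together with the $L^1$ mass bound then lets me absorb the critical terms and build a single functional $y(t):=\int_\Omega v_\varepsilon^2 + \int_\Omega|\nabla v_\varepsilon|^2$ satisfying $y'(t)+c_1 y(t)+c_2\int_\Omega u_\varepsilon^2 \le c_3$, uniformly in $\varepsilon$. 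Combining this with the $u_\varepsilon$-mass ODE delivers \dref{cz2.5ghju48cfg924ghyuji}.

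Finally, the spacetime bounds \dref{223455cz2.5ghjddffgggu48cfg924ghyuji} and \dref{bnmbncz2.5ghhjuyuivvbssdddeennihjj} follow by integrating the differential inequality just obtained over the time window $[t,t+\tau]$: the dissipation terms $\int_\Omega|\nabla v_\varepsilon|^2$, $\int_\Omega u_\varepsilon^2$ and $\int_\Omega|\Delta v_\varepsilon|^2$ sit on the left, while the pointwise-in-time bound \dref{cz2.5ghju48cfg924ghyuji} controls the boundary contributions $y(t+\tau)-y(t)$, so their time integrals are bounded by a constant independent of $\varepsilon$. The $L^2$-in-time bound on $\Delta v_\varepsilon$ is most cleanly supplied by Lemma \ref{lemma45xy1222232} (maximal Sobolev regularity with $\gamma=2$), applied to the $v_\varepsilon$-equation with forcing $g=u_\varepsilon$, using the already-established $L^2(\Omega\times(t,t+\tau))$ bound on $u_\varepsilon$. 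The estimate over $(0,T)$ is then recovered by summing over the finitely many windows of length $\tau$ covering $(0,T)$. The delicate point throughout is ensuring every constant is genuinely independent of $\varepsilon$, which holds because $0\le F_\varepsilon(s)\le 1$ makes the regularized drift no worse than the original one and never enters the integrated identities after the divergence term is discarded.
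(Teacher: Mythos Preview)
Your outline is close in spirit to the paper's argument, but there is a concrete gap in the middle step. You claim that testing the $v_\varepsilon$-equation with $v_\varepsilon$ and $-\Delta v_\varepsilon$, together with a Gagliardo--Nirenberg interpolation and the $L^1$ mass bound, yields an inequality of the form
\[
y'(t)+c_1 y(t)+c_2\int_\Omega u_\varepsilon^2 \le c_3, \qquad y(t)=\int_\Omega v_\varepsilon^2+\int_\Omega|\nabla v_\varepsilon|^2,
\]
with $\int_\Omega u_\varepsilon^2$ on the \emph{left}. This cannot come out of the $v$-equation: when you test with $-\Delta v_\varepsilon$ you get $-\int_\Omega u_\varepsilon\Delta v_\varepsilon$ on the right, and Young's inequality places $\int_\Omega u_\varepsilon^2$ on the right-hand side, not the left. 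No Gagliardo--Nirenberg step turns this around, and your reference to ``absorbing the critical terms'' via GN and the $L^1$ bound is not explained and does not do the job.

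The paper avoids this difficulty by a simpler and more direct observation you overlooked: the mass identity
\[
\frac{d}{dt}\int_\Omega u_\varepsilon = a\int_\Omega u_\varepsilon - \mu\int_\Omega u_\varepsilon^2
\]
itself, once $\int_\Omega u_\varepsilon$ is known to be bounded, can be integrated in time to give $\int_0^T\!\int_\Omega u_\varepsilon^2 \le C$ (and likewise on each window $[t,t+\tau]$) \emph{before} touching the $v$-equation. With this spacetime $L^2$ bound on $u_\varepsilon$ in hand, testing the $v$-equation with $-\Delta v_\varepsilon$ gives
\[
\frac{1}{2}\frac{d}{dt}\int_\Omega|\nabla v_\varepsilon|^2+\frac{1}{2}\int_\Omega|\Delta v_\varepsilon|^2+\int_\Omega|\nabla v_\varepsilon|^2 \le \frac{1}{2}\int_\Omega u_\varepsilon^2,
\]
from which the pointwise bound on $\int_\Omega|\nabla v_\varepsilon|^2$ and the spacetime bounds on $|\nabla v_\varepsilon|^2$ and $|\Delta v_\varepsilon|^2$ follow by a direct ODE/Gronwall argument (the paper cites Lemma~2.3 of \cite{Tao41215} for this step); testing with $v_\varepsilon$ then bounds $\int_\Omega v_\varepsilon^2$. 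In particular, neither Gagliardo--Nirenberg nor the maximal Sobolev regularity of Lemma~\ref{lemma45xy1222232} is needed here; your invocation of the latter is an unnecessary detour.
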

\begin{proof}
Here and throughout the proof of Lemma \ref{wsdelemma45}, we shall denote by $C_i (i\in \mathbb{N})$ several positive
constants independent of  $\varepsilon$.
From integration of the first equation in \dref{1.1fghyuisda} we obtain
\begin{equation}
\begin{array}{rl}
\disp\frac{d}{dt}\disp\int_{\Omega}u_{\varepsilon}=&\disp{\int_\Omega   (au_{\varepsilon}-\mu u_{\varepsilon}^2) ~~\mbox{for all}~~ t\in(0, T_{max,\varepsilon}),}\\
\end{array}
\label{111811cz2.5114114}
\end{equation}
which combined with  the Cauchy-Schwarz inequality implies that
\begin{equation}
\begin{array}{rl}
\disp\frac{d}{dt}\disp\int_{\Omega}u_{\varepsilon}\leq&\disp{a\int_\Omega u_{\varepsilon}-\frac{\mu}{|\Omega|} \left(\int_\Omega u_{\varepsilon}\right)^2
 ~~\mbox{for all}~~ t\in(0, T_{max,\varepsilon}).}\\
\end{array}
\label{1ssdd11811cz2.5114114}
\end{equation}
Hence, employing  the Young inequality to  \dref{1ssdd11811cz2.5114114} and  integrating the resulted inequality   in time, we derive  that there exists a positive constant $C_1$ independent of $\varepsilon$ such that
\begin{equation}
\int_{\Omega}{u_{\varepsilon}(x,t)} \leq C_1~~\mbox{for all}~~ t\in(0, T_{max,\varepsilon}).
\label{s3344cz2.5ghju48cfg924ghyuji}
\end{equation}
For  each $T\in(0, T_{max,\varepsilon})$,
we integrate \dref{111811cz2.5114114} over $(0,T)$
 and  recall \dref{s3344cz2.5ghju48cfg924ghyuji} to obtain
\begin{equation}
\begin{array}{rl}
&\disp{\int_{0}^T\int_{\Omega}u^2_{\varepsilon}\leq C_2.}\\
\end{array}
\label{bnmbncz2.5ghhdderrffjuyuivvbssdddeennihjj}
\end{equation}
Moreover, integrating  \dref{111811cz2.5114114} over $(t,t+\tau)$
 and  using  \dref{s3344cz2.5ghju48cfg924ghyuji}, we also derive 
 \begin{equation}
\begin{array}{rl}
&\disp{\int_{t}^{t+\tau}\int_{\Omega}u^2_{\varepsilon}\leq C_3~~\mbox{for all}~~ t\in(0, T_{max,\varepsilon}-\tau),}\\
\end{array}
\label{234bnmbncz2.5ghhdderrffjuyuivvbssdddeennihjj}
\end{equation}
where $\tau$ is given by \dref{223455cz2.5seerttyyuughjddffgggu48cfg924ghyuji}. 
Now,
multiplying the second  equation of \dref{1.1fghyuisda}
  by $-\Delta v_{\varepsilon}$, integrating over $\Omega$ and using  the Young inequality, 
 we get
$$
\begin{array}{rl}
\disp\frac{1}{{2}}\frac{d}{dt}\|\nabla v_{\varepsilon}\|^{{2}}_{L^{{2}}(\Omega)}+\int_{\Omega}|\Delta v_{\varepsilon}|^2+\int_{\Omega}|\nabla v_{\varepsilon}|^2
=&\disp{-\int_\Omega  u_{\varepsilon}\Delta v_{\varepsilon} }\\
\leq&\disp{\frac{1}{2}\int_\Omega  u_{\varepsilon}^2+\frac{1}{2}\int_\Omega  |\Delta v_{\varepsilon}|^2 ~~\mbox{for all}~~ t\in(0, T_{max,\varepsilon}),}\\
\end{array}
$$
which in light of \dref{234bnmbncz2.5ghhdderrffjuyuivvbssdddeennihjj} and  Lemma 2.3 of \cite{Tao41215} implies that
\begin{equation}
\int_{\Omega}|\nabla {v_{\varepsilon}}(x,t)|^2 \leq C_3~~\mbox{for all}~~ t\in(0, T_{max,\varepsilon})
\label{cz2.5ghju48cfg92dffff4ghdffffhhyuji}
\end{equation}
and
\begin{equation}
\begin{array}{rl}
&\disp{\int_{0}^T\int_{\Omega}[|\nabla {v_{\varepsilon}}|^2+ |\Delta {v_{\varepsilon}}|^2]\leq C_4.}\\
\end{array}
\label{bnmddfgghghbncz2.5ghhjufffffyuivvbssdddeennihjj}
\end{equation}
Next, testing the second  equation of \dref{1.1fghyuisda}
  by $v_{\varepsilon}$ and applying \dref{234bnmbncz2.5ghhdderrffjuyuivvbssdddeennihjj}, we conclude that
\begin{equation}
\int_{\Omega}v_{\varepsilon}^2(x,t) \leq C_5~~\mbox{for all}~~ t\in(0, T_{max,\varepsilon}).
\label{cz2.5ghju48cfg924ghdffffggggffhhyuji}
\end{equation}
Now, collecting \dref{s3344cz2.5ghju48cfg924ghyuji}--\dref{cz2.5ghju48cfg924ghdffffggggffhhyuji} yields to \dref{cz2.5ghju48cfg924ghyuji}
and \dref{bnmbncz2.5ghhjuyuivvbssdddeennihjj}. Finally, the same argument as in the derivation of \dref{bnmbncz2.5ghhjuyuivvbssdddeennihjj} then shows that \dref{223455cz2.5ghjddffgggu48cfg924ghyuji} holds. 
\end{proof}

\begin{lemma}\label{lemmaghjssddgghhmk4563025xxhjklojjkkk}
Under the conditions of  Lemma  \ref{lemma70},
there exists $C>0$  independent of $\varepsilon$ such that the solution of \dref{1.1fghyuisda} satisfies
\begin{equation}
\begin{array}{rl}
&\disp{\int_{\Omega}u_{\varepsilon}\ln u_{\varepsilon}\leq C}\\
\end{array}
\label{czfvgb2.5ghhjuyuiissswwerrtthjj}
\end{equation}
for all $t\in(0, T_{max,\varepsilon})$.
Moreover,  for each $T\in(0, T_{max,\varepsilon})$,
 one can find a constant $C > 0$ independent of $\varepsilon$
  such that
\begin{equation}
\begin{array}{rl}
&\disp{\int_{0}^T\int_{\Omega}  \frac{|\nabla u_{\varepsilon}|^2}{u_{\varepsilon}}\leq C(T+1)}\\
\end{array}
\label{bnmbncz2.5ghhjuyuiihjj}
\end{equation}
%
as well as
\begin{equation}
\begin{array}{rl}
&\disp{\int_{0}^T\int_{\Omega}   u^2_{\varepsilon}(\ln u_{\varepsilon}+1)\leq C(T+1).}\\
\end{array}
\label{cvffvbssdgvvcz2.ffghhjj5ghhjuyuiihjj}
\end{equation}

\end{lemma}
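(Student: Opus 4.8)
The plan is to derive all three estimates from an entropy-type functional built on the quantity $\int_\Omega u_\varepsilon \ln u_\varepsilon$, exploiting the $\varepsilon$-uniform bounds already secured in Lemma \ref{wsdelemma45}. First I would test the first equation of \dref{1.1fghyuisda} against $\ln u_\varepsilon + 1$. Using $\nabla u_\varepsilon \cdot \nabla(\ln u_\varepsilon) = |\nabla u_\varepsilon|^2/u_\varepsilon$, the diffusion term produces the dissipation $-\int_\Omega |\nabla u_\varepsilon|^2/u_\varepsilon$, and integration by parts on the chemotaxis term gives
\begin{equation}
\frac{d}{dt}\int_\Omega u_\varepsilon \ln u_\varepsilon + \int_\Omega \frac{|\nabla u_\varepsilon|^2}{u_\varepsilon} = \chi\int_\Omega F_\varepsilon(u_\varepsilon)\nabla u_\varepsilon\cdot\nabla v_\varepsilon + \int_\Omega(a-\mu u_\varepsilon)u_\varepsilon(\ln u_\varepsilon+1).
\label{plan-entropy}
\end{equation}
Since $0\le F_\varepsilon(u_\varepsilon)\le 1$, the cross-diffusion term is bounded via Young's inequality by $\tfrac12\int_\Omega |\nabla u_\varepsilon|^2/u_\varepsilon \cdot u_\varepsilon F_\varepsilon^2$, but it is cleaner to write it as $-\chi\int_\Omega u_\varepsilon F_\varepsilon(u_\varepsilon)\Delta v_\varepsilon$ after integrating by parts and then apply Young's inequality to split it against $\int_\Omega|\Delta v_\varepsilon|^2$ and $\int_\Omega u_\varepsilon^2$, both of which are spacetime-integrable by \dref{bnmbncz2.5ghhjuyuivvbssdddeennihjj}.

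The decisive structural point is that the logistic reaction contributes $-\mu\int_\Omega u_\varepsilon^2 \ln u_\varepsilon$ on the right-hand side, and for large $u_\varepsilon$ this term is strongly negative; it is precisely this superquadratic dissipation that controls the entropy. I would isolate the good term $-\mu\int_\Omega u_\varepsilon^2\ln u_\varepsilon$ (for $u_\varepsilon\ge 1$) on the left, while the remaining pieces $a\int_\Omega u_\varepsilon\ln u_\varepsilon + a\int_\Omega u_\varepsilon - \mu\int_\Omega u_\varepsilon^2$ and the region $\{u_\varepsilon<1\}$ (where $u_\varepsilon\ln u_\varepsilon$ is bounded) are absorbed using the elementary inequality $s\ln s \le \tfrac{\mu}{2a_+}s^2\ln s + C$ for $s$ large, or more directly by noting $a\,s\ln s \le \tfrac{\mu}{2}s^2\ln s + C(a,\mu)$ whenever $s$ exceeds a threshold. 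Combining \dref{plan-entropy} with these absorptions yields a differential inequality of the form
\begin{equation}
\frac{d}{dt}\int_\Omega u_\varepsilon\ln u_\varepsilon + \frac12\int_\Omega \frac{|\nabla u_\varepsilon|^2}{u_\varepsilon} + \frac{\mu}{2}\int_\Omega u_\varepsilon^2(\ln u_\varepsilon+1) \le C\Big(\int_\Omega u_\varepsilon^2 + \int_\Omega |\Delta v_\varepsilon|^2\Big) + C.
\label{plan-master}
\end{equation}

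From \dref{plan-master} the three claims follow by the quasi-uniform integrability machinery already used in Lemma \ref{wsdelemma45}. Integrating over $(t,t+\tau)$ and invoking \dref{223455cz2.5ghjddffgggu48cfg924ghyuji} to bound the right-hand side uniformly, together with the standard comparison lemma (Lemma 2.3 of \cite{Tao41215}) that upgrades such a quasi-uniform bound into a pointwise-in-time bound, I obtain \dref{czfvgb2.5ghhjuyuiissswwerrtthjj}. Integrating \dref{plan-master} over $(0,T)$ instead and discarding the nonnegative terms on the left then controls $\int_0^T\int_\Omega |\nabla u_\varepsilon|^2/u_\varepsilon$ and $\int_0^T\int_\Omega u_\varepsilon^2(\ln u_\varepsilon+1)$ by $C(T+1)$ after bounding the right-hand side via \dref{bnmbncz2.5ghhjuyuivvbssdddeennihjj}, giving \dref{bnmbncz2.5ghhjuyuiihjj} and \dref{cvffvbssdgvvcz2.ffghhjj5ghhjuyuiihjj}. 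The main obstacle I expect is the careful treatment of the chemotaxis term: one must convert $\chi\int_\Omega F_\varepsilon(u_\varepsilon)\nabla u_\varepsilon\cdot\nabla v_\varepsilon$ into quantities already known to be spacetime-bounded without generating a term that competes with the entropy dissipation or the logistic sink on the wrong side; the uniform bound $F_\varepsilon\le 1$ and the availability of $\int_0^T\int_\Omega|\Delta v_\varepsilon|^2$ from maximal regularity are exactly what make this controllable independently of $\varepsilon$.
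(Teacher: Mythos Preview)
Your proposal is correct and follows essentially the same route as the paper: test the first equation against $\ln u_\varepsilon$ (the paper uses $\ln u_\varepsilon$ rather than $\ln u_\varepsilon+1$, but since $\int_\Omega u_\varepsilon$ is already bounded this is immaterial), integrate the chemotaxis term by parts and control it via Young's inequality against $\int_\Omega |\Delta v_\varepsilon|^2$, absorb the lower-order logistic pieces into the superquadratic sink $-\mu\int_\Omega u_\varepsilon^2\ln u_\varepsilon$, and then invoke the quasi-uniform ODE comparison lemma (Lemma~2.3 of \cite{Tao41215}) together with \dref{223455cz2.5ghjddffgggu48cfg924ghyuji} and \dref{bnmbncz2.5ghhjuyuivvbssdddeennihjj}.

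One small correction: after integrating by parts, the chemotaxis term becomes $-\chi\int_\Omega G_\varepsilon(u_\varepsilon)\Delta v_\varepsilon$ with $G_\varepsilon(s)=\int_0^s F_\varepsilon(\sigma)\,d\sigma$, not $-\chi\int_\Omega u_\varepsilon F_\varepsilon(u_\varepsilon)\Delta v_\varepsilon$ as you wrote. This is harmless for the estimate since $0\le G_\varepsilon(s)\le s$ (because $F_\varepsilon\le 1$), so your subsequent Young splitting against $\int_\Omega u_\varepsilon^2$ and $\int_\Omega |\Delta v_\varepsilon|^2$ goes through unchanged; the paper carries out exactly this bound. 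Also note that to apply the comparison lemma in the form $y'+y\le g$ you should explicitly add $\int_\Omega u_\varepsilon\ln u_\varepsilon$ to both sides and absorb it into the $u_\varepsilon^2\ln(u_\varepsilon+1)$ term (using $s\ln s=o(s^2\ln(s+1))$ as $s\to\infty$), which is what the paper does to reach its differential inequality.
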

\begin{proof}
First, testing the first equation in \dref{1.1fghyuisda} by $\ln u_{\varepsilon}$ yields
\begin{equation}
\begin{array}{rl}
&\disp{\frac{d}{dt}\int_{\Omega}u_{\varepsilon}\ln u_{\varepsilon}}\\
=&\disp{\int_{\Omega}u_{\varepsilon t}\ln u_{\varepsilon}+u_{\varepsilon t}}\\
=&\disp{-\int_{\Omega}\frac{|\nabla u_{\varepsilon}|^2}{u_{\varepsilon}}+\chi\int_{\Omega}  F_{\varepsilon}(u_{\varepsilon})\nabla u_{\varepsilon}\cdot\nabla v_{\varepsilon}-\mu\int_{\Omega}u^2_{\varepsilon}\ln u_{\varepsilon}}\\
&\disp{+a\int_{\Omega}u_{\varepsilon}\ln u_{\varepsilon}-\mu\int_{\Omega}u^2_{\varepsilon}+a\int_{\Omega}u_{\varepsilon}~~~\mbox{for all}~~ t\in (0, T_{max,\varepsilon}).}\\
\end{array}
\label{czfvgb2.5ghhjuyuddffccvviihjj}
\end{equation}
Next, letting 
the function   $\psi:[0,\infty)\rightarrow \mathbb{R}$ be defined by
$$
 \psi(s)=\left\{\begin{array}{ll}
   -\frac{\mu}{2} s^2-\mu s^2\ln s+\frac{\mu}{2}s^2\ln (s+1)+as\ln s,~~~s>0,\\
    0,~~~s=0.\\
 \end{array}\right.
 $$
Then
$$\disp\lim_{s\rightarrow+\infty}\frac{\psi(s)}{s^2\ln (s+1)}=-\frac{\mu}{2},$$
so that for some $s_0 > 0$ we have $\psi < 0$ on $(s_0,\infty).$ Since clearly $\psi$ is continuous on $[0,\infty),$
hence, we derive that
\begin{equation}
 -\frac{\mu}{2} s^2-\mu s^2\ln s+as\ln s\leq \frac{\mu}{2}s^2\ln (s+1)+C_1 ~~~\mbox{for all}~~ s > 0.
\label{czfvgb2.5ghhjuyddfggghhhudssdddssddeedffccvviihjj}
\end{equation}
On the other hand, employing \dref{czfvgb2.5ghhjuyddfggghhhudssdddssddeedffccvviihjj} and using the Young inequality and \dref{cz2.5ghju48cfg924ghyuji}, one can get 
\begin{equation}
\begin{array}{rl}
&\disp{-\mu\int_{\Omega}u_{\varepsilon}^2-\mu\int_{\Omega}u_{\varepsilon}^2\ln u_{\varepsilon}+a\int_{\Omega}u_{\varepsilon}\ln u_{\varepsilon}+a\int_{\Omega}u_{\varepsilon}}\\
\leq&\disp{-\frac{\mu}{2}\int_{\Omega}u_{\varepsilon}^2\ln (u_{\varepsilon}+1)-\frac{\mu}{2}\int_{\Omega}u_{\varepsilon}^2+C_2
~~~\mbox{for all}~~ t\in (0, T_{max,\varepsilon})}\\
\end{array}
\label{czfvgb2.5ghhjuyudssddeedffccvviihjj}
\end{equation}
with some positive constant  $C_2.$
Next, 
once more integrating by parts and using the Young inequality and \dref{1.ffggvbbnxxccvvn1}, we derive
\begin{equation}
\begin{array}{rl}
\chi\disp\int_{\Omega}  F_{\varepsilon}(u_{\varepsilon})\nabla u_{\varepsilon}\cdot\nabla v_{\varepsilon}
=&\disp{-\chi\int_{\Omega} \int_0^{u_{\varepsilon}}   \frac{1}{1+\varepsilon s}ds \Delta v_{\varepsilon}}\\
\leq&\disp{\chi\int_{\Omega} \int_0^{u_{\varepsilon}}   \frac{1}{1+\varepsilon s}ds |\Delta v_{\varepsilon}|}\\
\leq&\disp{\chi\int_{\Omega} u_{\varepsilon}|\Delta v_{\varepsilon}|}\\
\leq&\disp{\frac{\mu}{4}\int_{\Omega}   u_{\varepsilon}^2+\frac{\chi^2}{\mu}\int_{\Omega}   |\Delta v_{\varepsilon}|^2~~~\mbox{for all}~~ t\in (0, T_{max,\varepsilon}).}\\
\end{array}
\label{czfvgb2.5ghhjssdeeaauyudssddeedffccvviihjj}
\end{equation}

Putting the estimates \dref{czfvgb2.5ghhjuyudssddeedffccvviihjj} and  \dref{czfvgb2.5ghhjssdeeaauyudssddeedffccvviihjj} into \dref{czfvgb2.5ghhjuyuddffccvviihjj} and using \dref{cz2.5ghju48cfg924ghyuji}, then there exists a positive constant $C_{3}$ such that
$$
\begin{array}{rl}
&\disp{\frac{d}{dt}\int_{\Omega}u_{\varepsilon}\ln u_{\varepsilon}+\int_{\Omega}\frac{|\nabla u_{\varepsilon}|^2}{u_{\varepsilon}}+\frac{\mu}{4}\int_{\Omega}u^2_{\varepsilon}\ln (u_{\varepsilon}+1)+
\frac{\mu}{4}\int_{\Omega}u^2_{\varepsilon}}\\
\leq&\disp{\frac{\chi^2}{\mu}\int_{\Omega} |\Delta v_{\varepsilon}|^2+C_{3}~~~\mbox{for all}~~ t\in (0, T_{max,\varepsilon}),}\\
\end{array}
$$
which implies that
\begin{equation}
\begin{array}{rl}
&\disp{\frac{d}{dt}\int_{\Omega}u_{\varepsilon}\ln u_{\varepsilon}+\int_{\Omega}u_{\varepsilon}\ln u_{\varepsilon}+\int_{\Omega}\frac{|\nabla u_{\varepsilon}|^2}{u_{\varepsilon}}+\frac{\mu}{8}\int_{\Omega}u^2_{\varepsilon}\ln (u_{\varepsilon}+1)+
\frac{\mu}{4}\int_{\Omega}u^2_{\varepsilon}}\\
\leq&\disp{\frac{\chi^2}{\mu}\int_{\Omega} |\Delta v_{\varepsilon}|^2+C_{4}~~~\mbox{for all}~~ t\in (0, T_{max,\varepsilon})}\\
\end{array}
\label{czfvgb2.5ghhjussderfrfyuddffccvviihjj}
\end{equation}
and some positive constant $C_4$. Here we have use the fact that   $$\disp\lim_{s\rightarrow+\infty}\frac{s \ln s}{s^2\ln (s+1)}=0.$$
Combined with \dref{223455cz2.5ghjddffgggu48cfg924ghyuji} and \dref{czfvgb2.5ghhjussderfrfyuddffccvviihjj}, applying   Lemma 2.3 of \cite{Tao41215} (see also Lemma 2.4 of \cite{Zhengsssddssddsseedssddxxss}), we can obtain
\dref{czfvgb2.5ghhjuyuiissswwerrtthjj}--\dref{cvffvbssdgvvcz2.ffghhjj5ghhjuyuiihjj}. The proof of Lemma \ref{wsdelemma45} is completed.
\end{proof}

With Lemma  \ref{lemmaghjssddgghhmk4563025xxhjklojjkkk} at hand, using the idea coming from \cite{Zhengssedddrrrrzseeddd0}, we are now in the position to prove  the solution of approximate
problem \dref{1.1fghyuisda}  is actually global in time.
\begin{lemma}\label{kkklemmaghjmk4563025xxhjklojjkkk} Let $\Omega\subset\mathbb{R}^N(N\geq1)$ be a smooth bounded  domain.
If 
$\mu>0$, then
for all $\varepsilon\in(0,1),$ the solution of  \dref{1.1fghyuisda} is global in time.
\end{lemma}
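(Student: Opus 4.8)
The plan is to invoke the extensibility criterion from Lemma \ref{lemma70}: since a finite $T_{max,\varepsilon}$ would force $\|u_\varepsilon(\cdot,t)\|_{L^\infty(\Omega)}\to\infty$ as $t\nearrow T_{max,\varepsilon}$ by \dref{1.163072x}, it suffices to produce, for each fixed $\varepsilon\in(0,1)$ and each finite $T<T_{max,\varepsilon}$, a bound $\|u_\varepsilon(\cdot,t)\|_{L^\infty(\Omega)}\le C(\varepsilon,T)$ that stays finite as $T$ approaches any finite value. The decisive structural feature I would exploit is that the regularization in \dref{1.ffggvbbnxxccvvn1} makes the cross-diffusion coefficient bounded, namely $u_\varepsilon F_\varepsilon(u_\varepsilon)=\frac{u_\varepsilon}{1+\varepsilon u_\varepsilon}\le\frac1\varepsilon$. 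Hence, at the price of $\varepsilon$-dependent constants, the drift in the $u_\varepsilon$-equation is subcritical, and an $L^p$-bootstrap should close.

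First I would derive an $L^p$ differential inequality. Testing the first equation of \dref{1.1fghyuisda} by $u_\varepsilon^{p-1}$ (with $p>1$) and integrating the chemotactic contribution by parts gives, after writing $\mathcal G_p(s):=\int_0^s\sigma^{p-2}\sigma F_\varepsilon(\sigma)\,d\sigma\le\frac{s^{p-1}}{\varepsilon(p-1)}$, a term bounded by $\frac{\chi}{\varepsilon}\int_\Omega u_\varepsilon^{p-1}|\Delta v_\varepsilon|$. Using Young's inequality in the form $u_\varepsilon^{p-1}|\Delta v_\varepsilon|\le\eta\, u_\varepsilon^{p+1}+C(\eta)|\Delta v_\varepsilon|^{\frac{p+1}{2}}$ and absorbing both this and the term $a\int_\Omega u_\varepsilon^p$ into the logistic dissipation $-\mu\int_\Omega u_\varepsilon^{p+1}$, I would arrive at
$$\frac{d}{dt}\int_\Omega u_\varepsilon^p+\int_\Omega u_\varepsilon^p+\frac{\mu p}{8}\int_\Omega u_\varepsilon^{p+1}\le C(\varepsilon,p)\Big(\int_\Omega|\Delta v_\varepsilon|^{\frac{p+1}{2}}+1\Big).$$
The crucial point is that the forcing involves $\Delta v_\varepsilon$ only to the power $\frac{p+1}{2}$.

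Next I would control the forcing by the maximal Sobolev regularity estimate of Lemma \ref{lemma45xy1222232}, applied with $\gamma=\frac{p+1}{2}$ to the second equation of \dref{1.1fghyuisda}: this bounds $\int_0^T e^{\gamma s}\|\Delta v_\varepsilon\|_{L^\gamma}^\gamma\,ds$ in terms of $\int_0^T e^{\gamma s}\|u_\varepsilon\|_{L^\gamma}^\gamma\,ds$ plus data, i.e. in terms of the space–time integral of $u_\varepsilon^{(p+1)/2}$, which is of strictly lower order than $u_\varepsilon^{p+1}$. Matching the exponential weight with the integrating factor $e^{t}$ (respectively $e^{\gamma t}$) and combining with the ODE comparison of Lemma 2.3 of \cite{Tao41215}, this sets up a genuine iteration: a space–time $L^{q}$-bound for $u_\varepsilon$ yields a pointwise-in-time $L^{q'}$-bound with $q'>q$, the scheme being started from the $L^2$– and $L^2\log L$–bounds already furnished by Lemmata \ref{wsdelemma45} and \ref{lemmaghjssddgghhmk4563025xxhjklojjkkk}. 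Since the admissible exponent roughly doubles at each step, after finitely many iterations one reaches $\sup_{t\in(0,T)}\|u_\varepsilon(\cdot,t)\|_{L^p(\Omega)}\le C(\varepsilon,T,p)$ for arbitrarily large $p$.

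Finally, once $u_\varepsilon$ is bounded in $L^\infty((0,T);L^p(\Omega))$ for some $p>N$, standard smoothing for the $v_\varepsilon$-equation gives $\nabla v_\varepsilon\in L^\infty((0,T);L^\infty(\Omega))$, and a concluding Moser-type iteration (or the Neumann-heat-semigroup representation in a variation-of-constants formula) upgrades this to $\|u_\varepsilon(\cdot,t)\|_{L^\infty(\Omega)}\le C(\varepsilon,T)$ on $[0,T]$. This contradicts the blow-up alternative \dref{1.163072x} unless $T_{max,\varepsilon}=+\infty$. I expect the main obstacle to be the bookkeeping of the iteration step: one must verify that the forcing $\int_\Omega|\Delta v_\varepsilon|^{(p+1)/2}$ is genuinely of lower order after invoking maximal regularity, and that the exponential weights in Lemma \ref{lemma45xy1222232} align with the integrating factor so that each step strictly increases integrability. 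Since $\varepsilon$ is fixed and $T$ finite, the growth of the constants $C(\varepsilon,p,T)$ with $p$ is harmless for this qualitative conclusion, which is precisely what separates this comparatively soft argument from the uniform-in-$\varepsilon$, quantitative estimates that Theorems \ref{theorem773}--\ref{theorem3} will require.
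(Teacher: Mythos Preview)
Your proposal is correct, and the opening $L^p$ energy identity (with the bound $u_\varepsilon F_\varepsilon(u_\varepsilon)\le \frac{1}{\varepsilon}$ yielding the forcing term $\frac{\chi}{\varepsilon}\int_\Omega u_\varepsilon^{p-1}|\Delta v_\varepsilon|$) is exactly what the paper derives in \dref{11cz2.563019114}. From that point on, however, the two arguments diverge. The paper does \emph{not} invoke the maximal Sobolev regularity of Lemma~\ref{lemma45xy1222232} here; instead it first specializes to $p=2$, feeds in the space--time bound $\int_0^{T_{max,\varepsilon}}\int_\Omega|\Delta v_\varepsilon|^2\le C$ from \dref{23455bnmbncz2.5ghhjuyuivvbssdddeenddrftgyuuiinihjj} to obtain a uniform-in-time $L^2$ bound on $u_\varepsilon$, then upgrades this via Neumann heat-semigroup smoothing to $\nabla v_\varepsilon\in L^\infty((0,T_{max,\varepsilon});L^{\gamma_0}(\Omega))$ for any $\gamma_0<\frac{2N}{(N-2)_+}$. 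With $\nabla v_\varepsilon$ controlled pointwise in time, a \emph{single} application of the Gagliardo--Nirenberg inequality (together with the $L^1$ mass bound) closes the $L^p$ estimate for every $p>1$ in one shot---no iteration on $p$ is needed.

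Your route---controlling $\int_0^t e^{\gamma s}\|\Delta v_\varepsilon\|_{L^\gamma}^\gamma$ by $\int_0^t e^{\gamma s}\|u_\varepsilon\|_{L^\gamma}^\gamma$ with $\gamma=\frac{p+1}{2}$ and iterating $q\mapsto 2q-1$---also works and is in fact the mechanism the paper reserves for Lemma~\ref{lemma45630223}, where the maximal-regularity constant $C_{\frac{N}{2}+1}$ must be tracked quantitatively. For the present qualitative statement the paper's approach is slightly more economical (one $p=2$ step plus one Gagliardo--Nirenberg step instead of a doubling iteration), whereas your approach has the virtue of rehearsing exactly the machinery that the sharp threshold result will require later. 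One small point: your ``starting data'' are really the space--time bounds of Lemma~\ref{wsdelemma45} (in particular $\int_0^{T}\int_\Omega|\Delta v_\varepsilon|^2\le C$), not a pointwise-in-time $L^2$ bound; the $L\log L$ estimate of Lemma~\ref{lemmaghjssddgghhmk4563025xxhjklojjkkk} is not actually needed for your scheme.
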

\begin{proof}
In this Lemma, we shall denote by $C_i (i\in \mathbb{N})$ various positive constants which may vary from step to
step and which possibly depend on $\varepsilon$.
Assuming that $T_{max,\varepsilon}<+\infty.$ 
Then,
we first note that as a particular
consequence of Lemmata \ref{wsdelemma45}--\ref{lemmaghjssddgghhmk4563025xxhjklojjkkk}, we can then find $C_1 > 0$ such that
\begin{equation}
\begin{array}{rl}
&\disp{\int_{0}^{T_{max,\varepsilon}}\int_{\Omega}[|\nabla {v_{\varepsilon}}|^2+u^2_{\varepsilon}+ |\Delta {v_{\varepsilon}}|^2]\leq C_1.}\\
\end{array}
\label{23455bnmbncz2.5ghhjuyuivvbssdddeenddrftgyuuiinihjj}
\end{equation}
Multiplying the first equation of \dref{1.1fghyuisda}
  by $u_{\varepsilon}^{{p}-1}$ and integrating over $\Omega$, 
 we get
\begin{equation}
\begin{array}{rl}
&\disp{\frac{1}{{p}}\frac{d}{dt}\|u_{\varepsilon}\|^{{p}}_{L^{{p}}(\Omega)}+({{p}-1})\int_{\Omega}u_{\varepsilon}^{{{p}-2}}|\nabla u_{\varepsilon}|^2}
\\
=&\disp{-\chi\int_\Omega \nabla\cdot(u_{\varepsilon}F_{\varepsilon}(u_{\varepsilon})\nabla v_{\varepsilon})
  u_{\varepsilon}^{{p}-1} +
\int_\Omega   u_{\varepsilon}^{{p}-1}(au_{\varepsilon}-\mu u_{\varepsilon}^2) ~~\mbox{for all}~~ t\in(0, T_{max,\varepsilon}).}\\
\end{array}
\label{11cz2.5114114}
\end{equation}

Next,
integrating by parts to the first term on the right hand side of \dref{11cz2.5114114},  using  the Young inequality and \dref{1.ffggvbbnxxccvvn1},
we obtain 
\begin{equation}
\begin{array}{rl}
&\disp{-\chi\int_\Omega \nabla\cdot(u_{\varepsilon}F_{\varepsilon}(u_{\varepsilon})\nabla v_{\varepsilon})
  u_{\varepsilon}^{{p}-1} }
\\
=&\disp{({{p}-1})\chi\int_\Omega  u_{\varepsilon}^{{{p}-1}}F_{\varepsilon}(u_{\varepsilon})\nabla u_{\varepsilon}\cdot\nabla v_{\varepsilon} }
\\
=&\disp{({{p}-1})\chi\int_\Omega  \nabla\int_0^{u_{\varepsilon}}\frac{\tau^{{{p}-1}}}{1+\varepsilon \tau}d\tau\cdot\nabla v_{\varepsilon} }
\\
=&\disp{-({{p}-1})\chi\int_\Omega  \int_0^{u_{\varepsilon}}\frac{\tau^{{{p}-1}}}{1+\varepsilon \tau}d\tau\Delta v_{\varepsilon} }
\\
\leq&\disp{({{p}-1})\frac{\chi}{\varepsilon}\int_\Omega  \int_0^{u_{\varepsilon}}\tau^{p-2}d\tau|\Delta v_{\varepsilon}| }
\\
\leq&\disp{\frac{\chi}{\varepsilon}\int_\Omega  u_{\varepsilon}^{p-1}|\Delta v_{\varepsilon}| }
\\
\leq&\disp{\frac{\mu}{2}\int_\Omega u_{\varepsilon}^{{p+1}}dx+C_2\int_\Omega|\Delta v_{\varepsilon}|^{\frac{p+1}{2}}.}
\\
\end{array}
\label{11cz2.563019114}
\end{equation}
Inserting \dref{11cz2.563019114} into \dref{11cz2.5114114} and using the Young inequality, we derive
\begin{equation}
\begin{array}{rl}
&\disp{\frac{1}{{p}}\frac{d}{dt}\|u_{\varepsilon}\|^{{p}}_{L^{{p}}(\Omega)}+({{p}-1})\int_{\Omega}u_{\varepsilon}^{{{p}-2}}|\nabla u_{\varepsilon}|^2\leq-\frac{\mu}{4}\int_\Omega u_{\varepsilon}^{{p+1}}+C_2\int_\Omega|\Delta v_{\varepsilon}|^{\frac{p+1}{2}} +
C_3.}\\
\end{array}
\label{11cz2.51ssertyyyu14114ssde}
\end{equation}
Next, choosing $p=2$ in \dref{11cz2.51ssertyyyu14114ssde} and employing \dref{23455bnmbncz2.5ghhjuyuivvbssdddeenddrftgyuuiinihjj}, we conclude that
\begin{equation}
\| u_{\varepsilon}(\cdot,t)\|_{L^2(\Omega)}\leq C_4~~\mbox{for all}~~ t\in(0, T_{max,\varepsilon}).
\label{ddxcvbbggdddfghhdfgcz2vv.5ghju48cfg924ghyusdffji}
\end{equation}
Employing the same arguments as in the proof of Lemma 4.1 in \cite{Horstmann791}, and taking advantage of \dref{ddxcvbbggdddfghhdfgcz2vv.5ghju48cfg924ghyusdffji} and Lemma \ref{lemma70}, we conclude the estimate
\begin{equation}
\|\nabla v_{\varepsilon}(\cdot,t)\|_{L^{\gamma_0}(\Omega)}\leq C_5~~\mbox{for all}~~ t\in(0, T_{max,\varepsilon})~~\mbox{and}~~\gamma_0<\frac{2N}{(N-2)_{+}}.
\label{ddxcvbbggdddfghhdfgcz2vv.5ghju48cfg9ddfgghjj24ghyusdffji}
\end{equation}
Next,
integrating by parts to the first term on the right hand side of \dref{11cz2.5114114},  using \dref{1.ffggvbbnxxccvvn1} and the Young inequality,
we obtain 
\begin{equation}
\begin{array}{rl}
&\disp{-\chi\int_\Omega \nabla\cdot(u_{\varepsilon}F_{\varepsilon}(u_{\varepsilon})\nabla v_{\varepsilon})
  u_{\varepsilon}^{{p}-1} }
\\
=&\disp{({{p}-1})\chi\int_\Omega  u_{\varepsilon}^{{{p}-1}}F_{\varepsilon}(u_{\varepsilon})\nabla u_{\varepsilon}\cdot\nabla v_{\varepsilon} }
\\
\leq&\disp{({{p}-1})\frac{\chi}{\varepsilon}\int_\Omega  u_{\varepsilon}^{{{p}-2}}|\nabla u_{\varepsilon}||\nabla v_{\varepsilon}| }
\\
\leq&\disp{\frac{({{p}-1})}{4}\int_\Omega  u_{\varepsilon}^{{{p}-2}}|\nabla u_{\varepsilon}|^2+C_6\int_\Omega  u_{\varepsilon}^{{{p}-2}}|\nabla v_{\varepsilon}|^2,}
\\
\end{array}
\label{11111cz2.563019114}
\end{equation}
which together with  \dref{11cz2.5114114}, the Young inequality  and the H\"{o}lder  inequality implies that
\begin{equation}
\begin{array}{rl}
&\disp{\frac{1}{{p}}\frac{d}{dt}\|u_{\varepsilon}\|^{{p}}_{L^{{p}}(\Omega)}+\frac{3({{p}-1})}{4}\int_{\Omega}u_{\varepsilon}^{{{p}-2}}|\nabla u_{\varepsilon}|^2+\frac{\mu}{2}\int_\Omega   u_{\varepsilon}^{{p}+1} }\\
\leq&\disp{C_6\int_\Omega  u_{\varepsilon}^{{{p}-2}}|\nabla v_{\varepsilon}|^2 +
C_7}\\
\leq&\disp{C_6\left(\int_\Omega  u_{\varepsilon}^{\frac{\gamma_0({p}-2)}{\gamma_0-2}}\right)^{\frac{\gamma_0-2}{\gamma_0}}\left(\int_\Omega|\nabla v_{\varepsilon}|^{\gamma_0} \right)^{\frac{2}{\gamma_0}}+
C_7}\\
\leq&\disp{C_8\left(\int_\Omega  u_{\varepsilon}^{\frac{\gamma_0({p}-2)}{\gamma_0-2}}\right)^{\frac{\gamma_0-2}{\gamma_0}}+
C_7~~\mbox{for all}~~ t\in(0, T_{max,\varepsilon}).}\\
\end{array}
\label{11111cz2.51ssertyyyu14114ssde}
\end{equation}
By  the Gagliardo--Nirenberg inequality, we derive 
\begin{equation}
\begin{array}{rl}
&\disp C_8\left(\int_\Omega  u_{\varepsilon}^{\frac{\gamma_0({p}-2)}{\gamma_0-2}}\right)^{\frac{\gamma_0-2}{\gamma_0}}\\\
=&\disp{ C_8\|   u_{\varepsilon}^{\frac{p}{2}}\|^{\frac{2(p-2)}{p}}_{L^{\frac{2\gamma_0(p-2)}{p(\gamma_0-2)}}(\Omega)}}\\
\leq&\disp{C_{9}(\|\nabla    u_{\varepsilon}^{\frac{p}{2}}\|_{L^2(\Omega)}^{\tilde{\mu}_1}\|  u_{\varepsilon}^{\frac{p}{2}}\|_{L^\frac{2}{p}(\Omega)}^{1-\tilde{\mu}_1}+\|  u_{\varepsilon}^{\frac{p}{2}}\|_{L^\frac{2}{p}(\Omega)})^{\frac{2(p-2)}{p}}}\\
\leq&\disp{C_{10}(\|\nabla    u_{\varepsilon}^{\frac{p}{2}}\|_{L^2(\Omega)}^{\frac{2(p-2)\tilde{\mu}_1}{p}}+1)}\\
=&\disp{C_{10}(\|\nabla    u_{\varepsilon}^{\frac{p}{2}}\|_{L^2(\Omega)}^{2\frac{N\gamma_0(p-3)-2N}{\gamma_0(Np-N+2)}}+1)~~\mbox{for all}~~ t\in(0, T_{max,\varepsilon}),}\\
\end{array}
\label{cz2.wwsdeddfvgbhnjerfvghyh}
\end{equation}
where
$$\tilde{\mu}_1=\frac{\frac{Np}{2}-\frac{Np(\gamma_0-2)}{2\gamma_0(p-2)}}{1-\frac{N}{2}+\frac{Np}{2}}\in(0,1)~~\mbox{and}~~
\frac{2(p-2)}{p}\frac{\frac{Np}{2}-\frac{Np(\gamma_0-2)}{2\gamma_0(p-2)}}{1-\frac{N}{2}+\frac{Np}{2}}=2\frac{N(p-3)-\frac{2N}{\gamma_0}}{Np-N+2}<2.$$
In view  of \dref{cz2.wwsdeddfvgbhnjerfvghyh} and the Young inequality, we derive that
\begin{equation}
\begin{array}{rl}
\disp C_8\left(\int_\Omega  u_{\varepsilon}^{\frac{\gamma_0({p}-2)}{\gamma_0-2}}\right)^{\frac{\gamma_0-2}{\gamma_0}}\leq \frac{({{p}-1})}{4}\int_{\Omega}u_{\varepsilon}^{{{p}-2}}|\nabla u_{\varepsilon}|^2+C_{11}~\mbox{for all}~t\in(0, T_{max,\varepsilon}),\\
\end{array}
\label{cz2.wwddfvgbsdeddfvgbhnjerfvghyh}
\end{equation}
which
 together with \dref{11111cz2.51ssertyyyu14114ssde} yields that
\begin{equation}
\begin{array}{rl}
&\disp{\frac{1}{{p}}\frac{d}{dt}\|u_{\varepsilon}\|^{{{p}}}_{L^{{p}}(\Omega)}+
\frac{({{p}-1})}{2}\int_{\Omega}u_{\varepsilon}^{{{p}-2}}|\nabla u_{\varepsilon}|^2+\frac{\mu}{2}\int_\Omega   u_{\varepsilon}^{{p}+1} \leq
C_{12}~\mbox{for all}~t\in(0, T_{max,\varepsilon}). }\\
\end{array}
\label{cz2.5ghhjuyuiwssxxddcc22ssxccnnihjj}
\end{equation}
Now, with some basic analysis, we may derive that for all $p>1,$ 
\begin{equation}
\|u_{\varepsilon}(\cdot,t)\|_{L^p(\Omega)}\leq C_{13}~~\mbox{for all}~~ t\in(0, T_{max,\varepsilon}).
\label{ddxcvbbggdddfghhdfddcfvgbbgcz2vv.5gttghju48cfg924ghyuji}
\end{equation}

Next, using the outcome of \dref{ddxcvbbggdddfghhdfddcfvgbbgcz2vv.5gttghju48cfg924ghyuji} with suitably large $p$ as a starting point, we may employ
a Moser-type iteration (see  e.g. Lemma A.1 of  \cite{Tao794}) applied to the first equation of \dref{1.1fghyuisda} to derive
\begin{equation}
\begin{array}{rl}
\|u_{\varepsilon}(\cdot, t)\|_{L^{{\infty}}(\Omega)}\leq C_{14} ~~ \mbox{for all}~~~  t\in(\rho,T_{max,\varepsilon}) \\
\end{array}
\label{222cz2.5g5gghh56789hhjui78jj90099}
\end{equation}
with any  positive constant $\rho$.
In view of \dref{222cz2.5g5gghh56789hhjui78jj90099}, we apply Lemma \ref{lemma70} to reach a contradiction.
\end{proof}


In this subsection, we provide some time-derivatives uniform estimates of solutions to the
system \dref{1.1}. The estimate is
used in this Section  to construct the  weak solution of the equation \dref{1.1}.
This will be the purpose of the following  lemma:

\begin{lemma}\label{qqqqlemma45630hhuujjuuyytt}
 Then for any $T>0, $
  one can find $C > 0$ independent if $\varepsilon$ such that 
  \begin{equation}
\begin{array}{rl}
\disp\int_{0}^T\disp\int_{\Omega}|\nabla u_{\varepsilon}|^{\frac{4}{3}}
\leq&\disp{C(T+1),}\\
\end{array}
\label{5555ddffbnmbncz2ddfvgffgssssdddddddtyybhh.htt678ghhjjjddfghhhyuiihjj}
\end{equation}
\begin{equation}
 \begin{array}{ll}
\disp\int_0^T\|\partial_tu_{\varepsilon}(\cdot,t)\|_{(W^{2,q}(\Omega))^*}dt  \leq C(T+1)\\
   \end{array}\label{1.1ddfgeddvbnmklllhyuisda}
\end{equation}
as well as
\begin{equation}
 \begin{array}{ll}
  \disp\int_0^T\|\partial_tv_{\varepsilon}(\cdot,t)\|_{(W^{1,2}(\Omega))^*}^{2}dt  \leq C(T+1)\\
   \end{array}\label{wwwwwqqqq1.1dddfgbhnjmdfgeddvbnmklllhyussddisda}
\end{equation}
and
\begin{equation}
 \begin{array}{ll}
  \disp\int_0^T\int_{\Omega}|u_{\varepsilon}F_{\varepsilon}(u_{\varepsilon})\nabla v_{\varepsilon}| \leq C(T+1).\\
   \end{array}\label{1.1dddfgbhnjmdfgeddvbnmklllhyussddisda}
\end{equation}
\end{lemma}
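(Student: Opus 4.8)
The plan is to establish the four bounds in the order \dref{5555ddffbnmbncz2ddfvgffgssssdddddddtyybhh.htt678ghhjjjddfghhhyuiihjj}, \dref{1.1dddfgbhnjmdfgeddvbnmklllhyussddisda}, \dref{wwwwwqqqq1.1dddfgbhnjmdfgeddvbnmklllhyussddisda}, \dref{1.1ddfgeddvbnmklllhyuisda}, since the time-derivative estimate for $u_\varepsilon$ will consume both the $L^{4/3}$-gradient bound and the flux bound. Throughout, the only inputs are the $\varepsilon$-independent estimates already assembled in Lemmata \ref{wsdelemma45} and \ref{lemmaghjssddgghhmk4563025xxhjklojjkkk}: the pointwise-in-time mass bound $\int_\Omega u_\varepsilon\le C$ and the bound $\int_\Omega v_\varepsilon^2\le C$, the space--time bounds $\int_0^T\int_\Omega(u_\varepsilon^2+|\nabla v_\varepsilon|^2+|\Delta v_\varepsilon|^2)\le C$, and the entropy dissipation bound $\int_0^T\int_\Omega\frac{|\nabla u_\varepsilon|^2}{u_\varepsilon}\le C(T+1)$.

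For the gradient estimate \dref{5555ddffbnmbncz2ddfvgffgssssdddddddtyybhh.htt678ghhjjjddfghhhyuiihjj}, I would write $|\nabla u_\varepsilon|^{4/3}=\bigl(\frac{|\nabla u_\varepsilon|^2}{u_\varepsilon}\bigr)^{2/3}u_\varepsilon^{2/3}$ and apply the Young inequality with exponents $\frac32$ and $3$ to get the pointwise bound $|\nabla u_\varepsilon|^{4/3}\le \frac{|\nabla u_\varepsilon|^2}{u_\varepsilon}+u_\varepsilon^2$; integrating over $\Omega\times(0,T)$ and inserting the entropy dissipation bound together with the space--time $L^2$ bound for $u_\varepsilon$ yields the claim. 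The flux bound \dref{1.1dddfgbhnjmdfgeddvbnmklllhyussddisda} is immediate from $u_\varepsilon F_\varepsilon(u_\varepsilon)\le u_\varepsilon$ (as $F_\varepsilon\le 1$) followed by the Cauchy--Schwarz inequality in space--time against the $L^2$ bounds on $u_\varepsilon$ and $\nabla v_\varepsilon$. For the $v$-time-derivative bound \dref{wwwwwqqqq1.1dddfgbhnjmdfgeddvbnmklllhyussddisda}, I would test the second equation of \dref{1.1fghyuisda} against an arbitrary $\varphi\in W^{1,2}(\Omega)$, integrate the Laplacian by parts using the Neumann condition, and obtain $|\int_\Omega\partial_t v_\varepsilon\,\varphi|\le(\|\nabla v_\varepsilon\|_{L^2}+\|u_\varepsilon\|_{L^2}+\|v_\varepsilon\|_{L^2})\,\|\varphi\|_{W^{1,2}}$; squaring, taking the supremum over $\|\varphi\|_{W^{1,2}}\le 1$, integrating in time and invoking the uniform bounds gives $C(T+1)$.

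The estimate \dref{1.1ddfgeddvbnmklllhyuisda} is the most delicate and is the step I expect to be the main obstacle. Here the choice of the dual space $(W^{2,q}(\Omega))^*$ with $q>N$ is essential: only then does the Sobolev embedding $W^{2,q}(\Omega)\hookrightarrow W^{1,\infty}(\Omega)$ furnish the bounds $\|\varphi\|_{L^\infty},\,\|\nabla\varphi\|_{L^\infty}\le C\|\varphi\|_{W^{2,q}}$ that are needed to pair against the only low-regularity quantities at our disposal. Testing the first equation of \dref{1.1fghyuisda} against such a $\varphi$ and integrating by parts produces three contributions. The diffusion term $\int_\Omega\nabla u_\varepsilon\cdot\nabla\varphi$ is bounded by $\|\nabla u_\varepsilon\|_{L^1}\|\nabla\varphi\|_{L^\infty}$, and after the H\"older inequality in space ($\|\nabla u_\varepsilon\|_{L^1}\le|\Omega|^{1/4}\|\nabla u_\varepsilon\|_{L^{4/3}}$) followed by the H\"older inequality in time, it is controlled by the $L^{4/3}$-gradient estimate \dref{5555ddffbnmbncz2ddfvgffgssssdddddddtyybhh.htt678ghhjjjddfghhhyuiihjj}; the chemotactic term is bounded by $\|\nabla\varphi\|_{L^\infty}\int_\Omega u_\varepsilon F_\varepsilon(u_\varepsilon)|\nabla v_\varepsilon|$ and hence by the flux bound \dref{1.1dddfgbhnjmdfgeddvbnmklllhyussddisda}; and the reaction term $\int_\Omega(au_\varepsilon-\mu u_\varepsilon^2)\varphi$ is bounded by $\|\varphi\|_{L^\infty}(|a|\|u_\varepsilon\|_{L^1}+\mu\|u_\varepsilon\|_{L^2}^2)$ and hence by the mass and space--time $L^2$ bounds. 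Taking the supremum over $\|\varphi\|_{W^{2,q}}\le 1$ and integrating in time assembles these into $C(T+1)$; the exponent $\frac43$ is precisely what makes the time integration close with a bound linear in $T+1$. Collectively, these four estimates are exactly what an Aubin--Lions--Simon compactness argument requires in order to extract a strongly convergent subsequence and pass to the limit $\varepsilon\to0$ in the construction of the weak solution.
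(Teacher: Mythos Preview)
Your proposal is correct and follows essentially the same approach as the paper's proof: the same decomposition $|\nabla u_\varepsilon|^{4/3}=\bigl(\tfrac{|\nabla u_\varepsilon|^2}{u_\varepsilon}\bigr)^{2/3}u_\varepsilon^{2/3}$ for \dref{5555ddffbnmbncz2ddfvgffgssssdddddddtyybhh.htt678ghhjjjddfghhhyuiihjj} (the paper uses H\"older where you use Young, but this is cosmetic), the same Cauchy--Schwarz argument for \dref{1.1dddfgbhnjmdfgeddvbnmklllhyussddisda}, the same duality testing against $\varphi\in W^{1,2}$ for \dref{wwwwwqqqq1.1dddfgbhnjmdfgeddvbnmklllhyussddisda}, and the same use of the embedding $W^{2,q}(\Omega)\hookrightarrow W^{1,\infty}(\Omega)$ for $q>N$ to control the three contributions in \dref{1.1ddfgeddvbnmklllhyuisda}. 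The only difference is the order in which you present the four estimates.
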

\begin{proof}
Firstly,  due to \dref{cz2.5ghju48cfg924ghyuji}, \dref{bnmbncz2.5ghhjuyuivvbssdddeennihjj}, \dref{bnmbncz2.5ghhjuyuiihjj}, employing  the
H\"{o}lder inequality  and the Gagliardo-Nirenberg inequality, we conclude  that there exist positive
 constants $C_{1}$ and $C_2$ 
such that
\begin{equation}
\begin{array}{rl}
\disp\int_{0}^T\disp\int_{\Omega}|\nabla u_{\varepsilon}|^{\frac{4}{3}}=&\disp{
\int_{0}^T\int_{\Omega}\frac{|\nabla u_{\varepsilon}|^{\frac{4}{3}}}{u_{\varepsilon}^{\frac{2}{3}}}u_{\varepsilon}^{\frac{2}{3}}}\\
\leq&\disp{C_1\left[\int_{0}^T\int_{\Omega}\frac{|\nabla u_{\varepsilon}|^{2}}{u_{\varepsilon}} \right]^{\frac{2}{3}}
\left[\int_{0}^T\int_{\Omega}u_{\varepsilon}^{2} \right]^{\frac{1}{3}} }\\
\leq&\disp{C_{2}(T+1)~~\mbox{for all}~~ T > 0.}\\
\end{array}
\label{5555ddffbnmbncz2ddfvgffgssdddtyybhh.htt678ghhjjjddfghhhyuiihjj}
\end{equation}
Next,
testing the first equation of \dref{1.1}
 by certain   $\varphi\in C^{\infty}(\bar{\Omega})$ and using \dref{1.ffggvbbnxxccvvn1}, we have
 \begin{equation}
\begin{array}{rl}
&\disp\left|\int_{\Omega}u_{\varepsilon t}\varphi\right|\\
 =&\disp{\left|\int_{\Omega}\left[\Delta u_{\varepsilon}-\chi\nabla\cdot(u_{\varepsilon}F_{\varepsilon}(u_{\varepsilon})\nabla v_{\varepsilon})+ u_{\varepsilon}(a- \mu u_{\varepsilon})\right]\varphi\right|}
\\
\leq&\disp{\int_\Omega|\nabla u_{\varepsilon}||\nabla\varphi|+\chi\int_\Omega u_{\varepsilon}|\nabla v_{\varepsilon}||\nabla \varphi|
+\int_\Omega [au_{\varepsilon}+\mu u^2_{\varepsilon}]|\varphi|}\\
\leq&\disp{\left\{\int_\Omega \left[|\nabla u_{\varepsilon}|+ \chi u_{\varepsilon}|\nabla v_{\varepsilon}|+au_{\varepsilon}+\mu u^2_{\varepsilon}\right]\right\}\|\varphi\|_{W^{1,\infty}(\Omega)}}\\
\end{array}
\label{gbhncvbmdcfvgcz2.5ghju48}
\end{equation}
for all $t>0$.
Hence, observe that the embedding $W^{2,q }(\Omega)\hookrightarrow  W^{1,\infty}(\Omega)(q>N)$, due to \dref{cz2.5ghju48cfg924ghyuji}, \dref{bnmbncz2.5ghhjuyuivvbssdddeennihjj}, \dref{5555ddffbnmbncz2ddfvgffgssdddtyybhh.htt678ghhjjjddfghhhyuiihjj}, applying  the Young inequality, we deduce $C_3$ and $C_4$ such
that
\begin{equation}
\begin{array}{rl}
&\disp\int_0^T\|\partial_{t}u_{\varepsilon}(\cdot,t)\|_{(W^{2,q }(\Omega))^*}dt\\
\leq&\disp{C_3\left\{\int_0^T\int_{\Omega}|\nabla u_{\varepsilon}|^{\frac{4}{3}}+\int_0^T\int_{\Omega}u^2_{\varepsilon}+\int_0^T\int_{\Omega}|\nabla v_{\varepsilon}|^{2}\right\}
}\\
\leq&\disp{C_4(T+1)~~\mbox{for all}~~ T > 0,
}\\
\end{array}
\label{yyygbhncvbmdcxxcdfvgbfvgcz2.5ghju48}
\end{equation}
which implies  \dref{1.1ddfgeddvbnmklllhyuisda}.

Likewise, given any $\varphi\in  C^\infty(\bar{\Omega})$, we may test the second equation in \dref{1.1} against
$\varphi$
to conclude  that
\begin{equation}
\begin{array}{rl}
\disp\left|\int_{\Omega}\partial_{t}v_{\varepsilon}(\cdot,t)\varphi\right|=&\disp{\left|\int_{\Omega}\left[\Delta v_{\varepsilon}-v_{\varepsilon}+u_{\varepsilon}\right]\cdot\varphi\right|}
\\
=&\disp{\left|-\int_\Omega \nabla v_{\varepsilon}\cdot\nabla\varphi-\int_\Omega  v_{\varepsilon}\varphi+\int_\Omega u_{\varepsilon} \varphi\right|}\\
\leq&\disp{\left\{\|\nabla v_{\varepsilon}\|_{L^{{2}}(\Omega)}+\|v_{\varepsilon} \|_{L^{2}(\Omega)}+\|u_{\varepsilon}\|_{L^{2}(\Omega)}\right\}\|\varphi\|_{W^{1,2}(\Omega)}
~~\mbox{for all}~~ t>0.}\\
\end{array}
\label{wwwwwqqqqgbhncvbmdcfxxdcvbccvbbvgcz2.5ghju48}
\end{equation}
Collecting \dref{cz2.5ghju48cfg924ghyuji} and \dref{bnmbncz2.5ghhjuyuivvbssdddeennihjj}, we infer from \dref{wwwwwqqqqgbhncvbmdcfxxdcvbccvbbvgcz2.5ghju48}
\begin{equation}
\begin{array}{rl}
&\disp\int_0^T\|\partial_{t}v_{\varepsilon}(\cdot,t)\|^{2}_{(W^{1,2}(\Omega))^*}dt\\
\leq&\disp{C_{5}\left(\int_0^T\int_\Omega|\nabla v_{\varepsilon}|^{2}+\int_0^T\int_\Omega u^2_{\varepsilon}+
\int_0^T\int_\Omega v_{\varepsilon}^{2}\right)}\\
\leq&\disp{C_{6}(T+1)
~~\mbox{for all}~~ T>0}\\
\end{array}
\label{wwwwwqqqqgbhncvbmdcfxxxcvbddfghnxdcvbbbvgcz2.5ghju48}
\end{equation}
and  some positive constants $C_5$, $C_6.$
Therefore,
we see \dref{wwwwwqqqq1.1dddfgbhnjmdfgeddvbnmklllhyussddisda}  holds immediately.

In light of
 \dref{cz2.5ghju48cfg924ghyuji}, \dref{bnmbncz2.5ghhjuyuivvbssdddeennihjj}
   and   the Young inequality, we derive that  there exists a positive
constant $C_{7}$ such that 
\begin{equation}
\begin{array}{rl}
\disp\int_{0}^T\disp\int_{\Omega}|u_{\varepsilon} F_{\varepsilon}(u_{\varepsilon})\nabla v_{\varepsilon}|
\leq&\disp{\left(\int_0^T\int_{\Omega}|\nabla v_{\varepsilon}|^{2}\right)^{\frac{1}{2}}\left(\int_0^T\int_{\Omega}u_{\varepsilon}^{2}\right)
^{\frac{1}{2}}}\\
\leq&\disp{C_{7}(T+1)~~\mbox{for all}~~ T > 0.}\\
\end{array}
\label{ddfff5555ddffbnmbncz2ddfvgffgtyybhh.htt678ghhjjjddfghhhyuiihjj}
\end{equation}
This  readily establishes  \dref{1.1dddfgbhnjmdfgeddvbnmklllhyussddisda}.
\end{proof}

With the above compactness properties at hand, by means of a standard extraction procedure we can
now derive the following lemma which actually contains our main existence result already.

%
%
%
{\bf The proof of Theorem  \ref{theorem773}}
Firstly, 
in light of Lemmata \ref{lemmaghjssddgghhmk4563025xxhjklojjkkk} and \ref{qqqqlemma45630hhuujjuuyytt},
we conclude that there exists a positive constant $C_1$  such that
\begin{equation}
\begin{array}{rl}
\|u_{\varepsilon}\|_{L^{\frac{4}{3}}_{loc}([0,\infty); W^{1,\frac{4}{3}}(\Omega))}\leq C_1(T+1)~~~\mbox{and}~~~\|\partial_{t}u_{\varepsilon}\|_{L^{1}_{loc}([0,\infty); (W^{2,q}(\Omega))^*)}\leq C_1(T+1)
\end{array}
\label{ggjjssdffzcddffcfccvvfggvvvvgccvvvgjscz2.5297x963ccvbb111kkuu}
\end{equation}
as well as
\begin{equation}
\begin{array}{rl}
\|v_{\varepsilon}\|_{L^{2}_{loc}([0,\infty); W^{2,2}(\Omega))}\leq C_1(T+1)~~~\mbox{and}~~~\|\partial_{t}v_{\varepsilon}\|_{L^{2}_{loc}([0,\infty); (W^{1,2}(\Omega)))^*)}\leq C_1(T+1).
\end{array}
\label{ggjjssdffzcddffcfccvvfggvvddffvvgccvvvgjscz2.5297x963ccvbb111kkuu}
\end{equation}
Hence, collecting
\dref{ggjjssdffzcddffcfccvvfggvvvvgccvvvgjscz2.5297x963ccvbb111kkuu}--\dref{ggjjssdffzcddffcfccvvfggvvddffvvgccvvvgjscz2.5297x963ccvbb111kkuu} and  employing the the Aubin-Lions lemma (see e.g.
\cite{Simon}), we conclude that
\begin{equation}
\begin{array}{rl}
(u_{\varepsilon})_{\varepsilon\in(0,1)}~~~\mbox{is strongly precompact in}~~~L^{\frac{4}{3}}_{loc}(\bar{\Omega}\times[0,\infty)).
\end{array}
\label{ggjjssdffddfffzcccddffcfccvvfggvvggvvgccffghhvvvgjscz2.5297x963ccvbb111kkuu}
\end{equation}
as well as
\begin{equation}
\begin{array}{rl}
(v_{\varepsilon})_{\varepsilon\in(0,1)}~~~\mbox{is strongly precompact in}~~~L^{2}_{loc}(\bar{\Omega}\times[0,\infty)).
\end{array}
\label{ggjjssdffzcddddffcfcchhvvfggvvddffvvgccvvvgjscz2.5297x963ccvbb111kkuu}
\end{equation}
Therefore, there exists a subsequence $\varepsilon=\varepsilon_j\subset (0,1)_{j\in \mathbb{N}}$
and the limit functions $u,v$ and $w$
such that
\begin{equation}
u_{\varepsilon}\rightarrow u ~~\mbox{in}~~ L^{\frac{4}{3}}_{loc}(\bar{\Omega}\times[0,\infty))~~\mbox{and}~~\mbox{a.e.}~~\mbox{in}~~\Omega\times(0,\infty),
 \label{zjscz2.52ssdddd97x963ddfgh06662222tt3}
\end{equation}
\begin{equation}
v_{\varepsilon}\rightarrow v ~~\mbox{in}~~ L^{2}_{loc}(\bar{\Omega}\times[0,\infty))~~\mbox{and}~~\mbox{a.e.}~~\mbox{in}~~\Omega\times(0,\infty),
 \label{zjscz2.5297x963ddfgh06662222tt3}
\end{equation}
as well as
\begin{equation}
\nabla u_{\varepsilon}\rightharpoonup \nabla u~~\begin{array}{ll}
 \mbox{in}~~ L_{loc}^{\frac{4}{3}}(\bar{\Omega}\times[0,\infty))
   \end{array}\label{1.1ddfgghhhge666ccdf2345ddvbnmklllhyuisda}
\end{equation}
and
\begin{equation}
\Delta v_{\varepsilon}\rightharpoonup \Delta v~~\begin{array}{ll}
 \mbox{in}~~ L_{loc}^{2}(\bar{\Omega}\times[0,\infty)).
   \end{array}\label{1.1ddfgghhhsdddddfffge666ccdf2345ddvbnmklllhyuisda}
\end{equation}
Next, 
in light of  \dref{bnmbncz2.5ghhjuyuivvbssdddeennihjj},
%
%
there exists a subsequence $\varepsilon=\varepsilon_j\subset (0,1)_{j\in \mathbb{N}}$
such that
$\varepsilon_j\searrow 0$ as $j \rightarrow\infty$ 
\begin{equation}
 u_{\varepsilon}\rightharpoonup  u~~\begin{array}{ll}
\mbox{in}~~~L_{loc}^{2}(\bar{\Omega}\times[0,\infty)).\\
   \end{array}\label{1.1666ddccvvfggfgghhhgeccdf2345ddvbnmklllhyuisda}
\end{equation}
Next, let $g_{\varepsilon}(x, t) := -v_{\varepsilon}+u_{\varepsilon}.$
Therefore, recalling \dref{cz2.5ghju48cfg924ghyuji}
and \dref{bnmbncz2.5ghhjuyuivvbssdddeennihjj}, we conclude that $v_{\varepsilon t}-\Delta v_{\varepsilon} = g_{\varepsilon}$
is bounded in $L^{2} (\Omega\times(0, T))$ for any  $\varepsilon\in(0,1)$,  we may invoke the standard parabolic regularity theory  to infer that
$(v_{\varepsilon})_{\varepsilon\in(0,1)}$ is bounded in
$L^{2} ((0, T); W^{2,2}(\Omega))$.
Thus,  by \dref{wwwwwqqqq1.1dddfgbhnjmdfgeddvbnmklllhyussddisda} and the Aubin--Lions lemma we derive that  the relative compactness of $(v_{\varepsilon})_{\varepsilon\in(0,1)}$ in
$L^{2} ((0, T); W^{1,2}(\Omega))$. We can pick an appropriate subsequence which is
still written as $(\varepsilon_j )_{j\in \mathbb{N}}$ such that $\nabla v_{\varepsilon_j} \rightarrow z_1$
 in $L^{2} (\Omega\times(0, T))$ for all $T\in(0, \infty)$ and some
$z_1\in L^{2} (\Omega\times(0, T))$ as $j\rightarrow\infty$, hence $\nabla v_{\varepsilon_j} \rightarrow z_1$ a.e. in $\Omega\times(0, \infty)$
 as $j \rightarrow\infty$.
In view  of \dref{1.1ddfgghhhge666ccdf2345ddvbnmklllhyuisda} and  the Egorov theorem we conclude  that
$z_1=\nabla v,$ and whence
\begin{equation}
\nabla v\rightarrow \nabla v~~\begin{array}{ll}
  ~\mbox{a.e.}~~\mbox{in}~~\Omega\times(0,\infty)~~~\mbox{as}~~\varepsilon =\varepsilon_j\searrow0.
   \end{array}\label{1.1ddhhyujiiifgghhhge666ccdf2345ddvbnmklllhyuisda}
\end{equation}
In the following, we shall prove $(u,v)$ is a weak solution of problem \dref{1.1} in Definition \ref{df1}.
In fact, with the help of  \dref{zjscz2.5297x963ddfgh06662222tt3}--\dref{1.1666ddccvvfggfgghhhgeccdf2345ddvbnmklllhyuisda},
 we can derive  \dref{dffff1.1fghyuisdakkklll}.
Now, by the nonnegativity of $u_{\varepsilon}$ and $v_{\varepsilon}$, we derive  $u \geq 0$ and $v\geq 0$. 
On the other hand, in view of \dref{zjscz2.52ssdddd97x963ddfgh06662222tt3} and \dref{1.1ddhhyujiiifgghhhge666ccdf2345ddvbnmklllhyuisda}, we can infer from
\dref{1.1dddfgbhnjmdfgeddvbnmklllhyussddisda} that
$$
u_{\varepsilon}F_{\varepsilon}(u_{\varepsilon})\nabla v_{\varepsilon}\rightharpoonup z_2~~\begin{array}{ll}
  ~~~\mbox{in}~~ L^{1}(\Omega\times(0,T))~~\mbox{for each}~~ T \in(0, \infty).
   \end{array}
   $$
Next, due to \dref{1.ffggvbbnxxccvvn1}, \dref{zjscz2.52ssdddd97x963ddfgh06662222tt3} and \dref{1.1ddhhyujiiifgghhhge666ccdf2345ddvbnmklllhyuisda}, we derive that
\begin{equation}u_{\varepsilon} F_{\varepsilon}(u_{\varepsilon})\nabla v_{\varepsilon}\rightarrow u\nabla v~~~\mbox{a.e.}~~\mbox{in}~~\Omega\times(0,\infty)~~~\mbox{as}~~\varepsilon =\varepsilon_j\searrow0.
\label{1.1ddddfddffttyygghhyujiiifgghhhgffgge666ccdf2345ddvbnmklllhyuisda}
\end{equation}
Therefore, by the Egorov theorem, we can get $z_2= u\nabla v,$ and hence
\begin{equation}
u_{\varepsilon}F_{\varepsilon}(u_{\varepsilon})\nabla v_{\varepsilon}\rightharpoonup u\nabla v~~\begin{array}{ll}
  ~~~\mbox{in}~~ L^{1}(\Omega\times(0,T))~~\mbox{for each}~~ T \in(0, \infty).
   \end{array}\label{zxxcvvfgggjscddf4556ffg77ffcvvfggz2.5ty}
\end{equation}
Therefore, by \dref{1.1ddfgghhhge666ccdf2345ddvbnmklllhyuisda} and  \dref{zxxcvvfgggjscddf4556ffg77ffcvvfggz2.5ty}, we conclude that the integrability of
$\nabla u$ and $u\nabla v$  in \dref{726291hh}.
 Finally, according to \dref{zjscz2.5297x963ddfgh06662222tt3}--\dref{1.1ddfgghhhsdddddfffge666ccdf2345ddvbnmklllhyuisda} and \dref{zxxcvvfgggjscddf4556ffg77ffcvvfggz2.5ty}, we may pass to the limit in
the respective weak formulations associated with the the regularized system \dref{1.1} and get
 the integral
identities \dref{eqx45xx12112ccgghh}--\dref{eqx45xx12112ccgghhjj}.

\section{The boundedness  and classical  solution of \dref{1.1}}

In order to discuss the boundedness  and classical  solution of \dref{1.1}, firstly,  we will
recall the known result about local existence of solutions to \dref{1.1} (see the proof of Lemma 1.1 of \cite{Winkler37103}).
\begin{lemma}\label{1118lemma70}Let $\Omega\subset\mathbb{R}^N(N\geq1)$ be a smooth bounded  domain.
Assume that the nonnegative functions $u_0\in C^{0}(\bar{\Omega})$ and $v_0\in W^{1,\theta}(\bar{\Omega})$ (with some $\theta> N$).
Then there exist a
maximal $T_{max}\in(0,\infty]$ and a uniquely determined pair $(u,v)$ of nonnegative functions
$$
 \left\{\begin{array}{ll}
 u\in C^0(\bar{\Omega}\times[0,T_{max}))\cap C^{2,1}(\bar{\Omega}\times(0,T_{max})),\\
  v\in  C^0(\bar{\Omega}\times[0,T_{max}))\cap C^{2,1}(\bar{\Omega}\times(0,T_{max}))\cap
   L^\infty_{loc}((0,T_{max}); W^{1,\theta}(\Omega))\\
   \end{array}\right.
   $$
that solve \dref{1.1} in the classical sense in
 $\Omega\times(0, T_{max})$. 
%
%
%
Moreover, if  $T_{max}<+\infty$, then
\begin{equation}
\|u(\cdot, t)\|_{L^\infty(\Omega)}\rightarrow\infty~~ \mbox{as}~~ t\nearrow T_{max}
\label{11181.163072x}
\end{equation}
is fulfilled.
\end{lemma}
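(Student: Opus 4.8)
The plan is to obtain Lemma~\ref{1118lemma70} by the by-now standard fixed-point scheme for chemotaxis systems, exactly as carried out in the proof of Lemma~1.1 of \cite{Winkler37103}; I sketch it here for completeness. The backbone is the Neumann heat semigroup $(e^{t\Delta})_{t\ge0}$ on $\Omega$ together with its $L^p$--$L^q$ smoothing estimates, in particular the gradient estimate $\|\nabla e^{t\Delta}w\|_{L^q(\Omega)}\le c\,t^{-\frac12-\frac{N}{2}(\frac1p-\frac1q)}\|w\|_{L^p(\Omega)}$ and the divergence estimate $\|e^{t\Delta}\nabla\cdot w\|_{L^q(\Omega)}\le c\,t^{-\frac12-\frac{N}{2}(\frac1p-\frac1q)}\|w\|_{L^p(\Omega)}$, valid for $1\le p\le q\le\infty$ up to the expected endpoint restrictions.

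First I would rewrite \dref{1.1} in its Duhamel (variation-of-constants) form
\begin{equation*}
u(t)=e^{t\Delta}u_0-\chi\int_0^te^{(t-s)\Delta}\nabla\cdot\bigl(u(s)\nabla v(s)\bigr)\,ds+\int_0^te^{(t-s)\Delta}u(s)\bigl(a-\mu u(s)\bigr)\,ds,
\end{equation*}
\begin{equation*}
v(t)=e^{t(\Delta-1)}v_0+\int_0^te^{(t-s)(\Delta-1)}u(s)\,ds,
\end{equation*}
and define a map $\Phi=(\Phi_1,\Phi_2)$ on the closed set
\begin{equation*}
S:=\Bigl\{(u,v)\in C^0\bigl([0,T];C^0(\bar\Omega)\bigr)\times C^0\bigl([0,T];W^{1,\theta}(\Omega)\bigr):\ \|u\|_{L^\infty}\le R,\ \|v\|_{L^\infty((0,T);W^{1,\theta})}\le R\Bigr\}
\end{equation*}
by letting $\Phi(u,v)$ be the right-hand sides above. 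The hypothesis $\theta>N$ is exactly what makes $W^{1,\theta}(\Omega)\hookrightarrow L^\infty(\Omega)$ and keeps $\nabla v$ controlled in $L^\theta$, so that the chemotactic term $u\nabla v$ lies in $L^\theta$ and the divergence estimate applies; the singular time weights $t^{-\frac12-\cdots}$ produced by that estimate are integrable near $s=t$, forcing the nonlinear contributions of $\Phi$ to be $O(T^{\delta})$ for some $\delta>0$. Choosing $R$ large relative to the data norms and then $T$ small, I would verify that $\Phi$ maps $S$ into itself and is a contraction there, whence the Banach fixed-point theorem yields a unique mild solution on $[0,T]$; uniqueness on the whole existence interval follows from the same contraction estimate applied to the difference of two solutions together with a Gronwall argument.

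Next I would upgrade regularity: starting from the mild solution, the fact that $\nabla v\in C^0([0,T];L^\theta)$ with $\theta>N$ places the coefficients of the two linear parabolic equations in spaces to which parabolic $L^p$-theory and then Schauder theory apply, so a standard bootstrap gives $u,v\in C^{2,1}(\bar\Omega\times(0,T))$ and hence a classical solution with the regularity asserted. Nonnegativity of $u$ and $v$ is obtained from the parabolic maximum principle (the $u$-equation has no-flux structure and reaction vanishing at $u=0$, while the $v$-equation carries the nonnegative source $u$). Patching these local solutions in the usual way produces a maximal existence time $T_{max}\in(0,\infty]$.

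The part requiring the most care is the extensibility criterion \dref{11181.163072x}. Here I would argue by contraposition: assume $T_{max}<\infty$ but $\limsup_{t\nearrow T_{max}}\|u(\cdot,t)\|_{L^\infty(\Omega)}<\infty$. Boundedness of $u$ in $L^\infty$ feeds into the $v$-equation and, via the smoothing estimates above (or the maximal Sobolev regularity of Lemma~\ref{lemma45xy1222232}), yields a uniform bound for $\|v(\cdot,t)\|_{W^{1,\theta}(\Omega)}$ up to $t=T_{max}$. With both $\|u\|_{L^\infty}$ and $\|\nabla v\|_{L^\theta}$ controlled on $[0,T_{max})$, the fixed-point construction can be restarted from any time close to $T_{max}$ with a step length $T$ depending only on these two bounds, thereby continuing the solution past $T_{max}$ and contradicting maximality. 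This self-improving loop --- bounded $\|u\|_{L^\infty}$ implying bounded $\|v\|_{W^{1,\theta}}$ implying a uniform existence step --- is the technical heart of the criterion, and it is precisely this mechanism that the later $L^p\to L^\infty$ estimates (Lemmata~\ref{lemma45630223116}--\ref{lemma45630223}) and the Moser iteration will exploit to prove global boundedness in Theorem~\ref{theorem3}.
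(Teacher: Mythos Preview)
Your proposal is correct and follows exactly the approach the paper intends: the paper does not give its own proof of this lemma but merely cites Lemma~1.1 of \cite{Winkler37103}, and your sketch is precisely the fixed-point/semigroup argument carried out there. There is nothing to add or correct.
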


The following result is similar to Lemma 3.4 of \cite{Zheng33312186}, which plays an important role
in the proof of Theorem \ref{theorem3}.
\begin{lemma}\label{lemma45630223116}
Let \begin{equation}
{A}_1=\frac{1}{ \delta+1}\left[\frac{ \delta+1}{ \delta}\right]^{- \delta }\left(\frac{\delta-1}{\delta} \right)^{ \delta+1}
\label{zjscz2.5297x9630222211444125}
\end{equation}
and $H(y)=y+{A}_1y^{- \delta }\chi^{ \delta+1}C_{ \delta+1}$ for $y>0.$
For any fixed $\delta\geq1,\chi,C_{\delta+1}>0,$
Then $$\min_{y>0}H(y)=\frac{(\delta-1)}{\delta}C_{\delta+1}^{\frac{1}{\delta+1}}\chi.$$
\end{lemma}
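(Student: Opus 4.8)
The plan is to treat this as an elementary one-variable calculus minimization. We are asked to minimize $H(y)=y+A_1 y^{-\delta}\chi^{\delta+1}C_{\delta+1}$ over $y>0$, where $A_1$ is the explicit constant given in \dref{zjscz2.5297x9630222211444125}. First I would differentiate: since $H$ is smooth on $(0,\infty)$ and $H(y)\to+\infty$ as both $y\to 0^+$ (because the $y^{-\delta}$ term blows up, the coefficient being positive) and $y\to+\infty$ (because of the linear term), $H$ attains a global minimum at an interior critical point. Setting
\begin{equation}
H'(y)=1-\delta A_1\chi^{\delta+1}C_{\delta+1}\,y^{-\delta-1}=0,
\label{eqplanHprime}
\end{equation}
I would solve for the unique positive critical point
\begin{equation}
y_\ast=\bigl(\delta A_1\chi^{\delta+1}C_{\delta+1}\bigr)^{\frac{1}{\delta+1}}.
\label{eqplanystar}
\end{equation}

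Next I would substitute $y_\ast$ back into $H$ and simplify. Writing $B:=A_1\chi^{\delta+1}C_{\delta+1}$ for brevity, one has $y_\ast=(\delta B)^{1/(\delta+1)}$ and $y_\ast^{-\delta}=(\delta B)^{-\delta/(\delta+1)}$, so that
\begin{equation}
H(y_\ast)=(\delta B)^{\frac{1}{\delta+1}}+B\,(\delta B)^{\frac{-\delta}{\delta+1}}
=(\delta B)^{\frac{1}{\delta+1}}+\delta^{\frac{-\delta}{\delta+1}}B^{\frac{1}{\delta+1}}
=\Bigl(\delta^{\frac{1}{\delta+1}}+\delta^{\frac{-\delta}{\delta+1}}\Bigr)B^{\frac{1}{\delta+1}}.
\label{eqplanHval}
\end{equation}
Since $\delta^{1/(\delta+1)}+\delta^{-\delta/(\delta+1)}=\delta^{-\delta/(\delta+1)}(\delta+1)=(\delta+1)\delta^{-\delta/(\delta+1)}$, the minimum value becomes $(\delta+1)\delta^{-\delta/(\delta+1)}B^{1/(\delta+1)}$. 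It then remains to unwind $B^{1/(\delta+1)}=A_1^{1/(\delta+1)}\chi\,C_{\delta+1}^{1/(\delta+1)}$ using the explicit form of $A_1$.

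The only genuine obstacle is the bookkeeping in the last step: I must verify that the prefactor collapses exactly to $(\delta-1)/\delta$. From \dref{zjscz2.5297x9630222211444125}, $A_1=\frac{1}{\delta+1}\bigl(\frac{\delta+1}{\delta}\bigr)^{-\delta}\bigl(\frac{\delta-1}{\delta}\bigr)^{\delta+1}$, so raising to the $1/(\delta+1)$ power gives $A_1^{1/(\delta+1)}=(\delta+1)^{-1/(\delta+1)}\bigl(\frac{\delta+1}{\delta}\bigr)^{-\delta/(\delta+1)}\cdot\frac{\delta-1}{\delta}$. Multiplying by the factor $(\delta+1)\delta^{-\delta/(\delta+1)}$ from \dref{eqplanHval}, I would collect the powers of $(\delta+1)$ and of $\delta$: the $(\delta+1)^{1-1/(\delta+1)}=(\delta+1)^{\delta/(\delta+1)}$ coming out front should cancel against the $(\delta+1)^{-\delta/(\delta+1)}$ hidden in $\bigl(\frac{\delta+1}{\delta}\bigr)^{-\delta/(\delta+1)}$, while the various powers of $\delta$ (namely $\delta^{-\delta/(\delta+1)}$ from the prefactor, $\delta^{+\delta/(\delta+1)}$ from $\bigl(\frac{\delta+1}{\delta}\bigr)^{-\delta/(\delta+1)}$) cancel as well, leaving precisely $\frac{\delta-1}{\delta}$. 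Thus $\min_{y>0}H(y)=\frac{\delta-1}{\delta}\,C_{\delta+1}^{1/(\delta+1)}\chi$, as claimed. I expect the exponent arithmetic to be the error-prone part, so I would carry the powers of $\delta$ and $\delta+1$ through separately and check they each telescope to the right residual power before announcing the result.
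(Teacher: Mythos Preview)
Your proof is correct and follows essentially the same route as the paper: differentiate $H$, use the boundary behavior $H(y)\to+\infty$ at $0^+$ and $+\infty$ to conclude the minimum is attained at the unique critical point $y_\ast=(\delta A_1 C_{\delta+1})^{1/(\delta+1)}\chi$, and substitute back to obtain $(A_1 C_{\delta+1})^{1/(\delta+1)}(\delta^{1/(\delta+1)}+\delta^{-\delta/(\delta+1)})\chi=\frac{\delta-1}{\delta}C_{\delta+1}^{1/(\delta+1)}\chi$. Your write-up is in fact more detailed than the paper's, which simply asserts the final simplification without tracking the exponents of $\delta$ and $\delta+1$ as you do.
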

\begin{proof}
It is easy to verify that $$H'(y)=1- A_1\delta C_{\delta+1}\left(\frac{\chi }{y} \right)^{\delta+1}.$$
Let $H'(y)=0$, we have
$$y=\left(A_1C_{\delta+1}\delta\right)^{\frac{1}{\delta+1}}\chi.$$
On the other hand, by $\lim_{y\rightarrow0^+}H(y)=+\infty$ and $\lim_{y\rightarrow+\infty}H(y)=+\infty$, we have
$$\begin{array}{rl}
\min_{y>0}H(y)=H[\left(A_1C_{\delta+1}\delta\right)^{\frac{1}{\delta+1}}\chi]=&\disp{\left(A_1C_{\delta+1}\right)^{\frac{1}{\delta+1}}
(\delta^{\frac{1}{\delta+1}}+\delta^{-\frac{\delta}{\delta+1}})\chi}\\
=&\disp{\frac{(\delta-1)}{\delta}C_{\delta+1}^{\frac{1}{\delta+1}}\chi.}\\
\end{array}
$$
\end{proof}
In order to discuss the boundedness  and classical  solution of \dref{1.1}, in light of Lemma \ref{1118lemma70},
firstly,  let us  pick any $s_0\in(0,T_{max})$ and $s_0\leq1$, 
 there exists
$K>0$ such that
\begin{equation}\label{eqx45xx12112}
\|u(\tau)\|_{L^\infty(\Omega)}\leq K,~~\|v(\tau)\|_{L^\infty(\Omega)}\leq K~~\mbox{and}~~\|\Delta v(\tau)\|_{L^\infty(\Omega)}\leq K~~\mbox{for all}~~\tau\in[0,s_0].
\end{equation}
\begin{lemma}\label{lemma45630223} Let $\Omega\subset\mathbb{R}^N(N\geq1)$ be a smooth bounded  domain.
Assume that    $\mu>\frac{(N-2)_+}{N}\chi C_{\frac{N}{2}+1}^{\frac{1}{\frac{N}{2}+1}}$, where $C_{\frac{N}{2}+1}$ is given by Lemma \ref{lemma45xy1222232}
(with $\gamma=\frac{N}{2}+1$ in Lemma \ref{lemma45xy1222232}).
    Let $(u,v)$ be a solution to \dref{1.1} on $(0,T_{max})$.
 Then 
 for all $p>1$,
there exists a positive constant $C:=C(p,|\Omega|,\mu,\chi,K)$ such that 
\begin{equation}
\int_{\Omega}u^p(x,t)\leq C ~~~\mbox{for all}~~ t\in(0,T_{max}).
\label{zjscz2.5297x96302222114}
\end{equation}
\end{lemma}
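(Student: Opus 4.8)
The plan is to prove the $L^p$ estimate \dref{zjscz2.5297x96302222114} by a testing-and-absorption argument built around the \emph{maximal Sobolev regularity} estimate from Lemma \ref{lemma45xy1222232}, which is what allows the sharp constant $C_{\frac{N}{2}+1}^{\frac{1}{\frac{N}{2}+1}}$ to enter and thereby makes the hypothesis $\mu>\frac{(N-2)_+}{N}\chi C_{\frac{N}{2}+1}^{\frac{1}{\frac{N}{2}+1}}$ the exact threshold for absorption. First I would test the first equation of \dref{1.1} against $u^{p-1}$ and integrate by parts, as in \dref{11cz2.5114114}; after integrating the chemotactic term by parts I would estimate $-\chi\int_\Omega\nabla\cdot(u\nabla v)u^{p-1}=(p-1)\chi\int_\Omega u^{p-1}\nabla u\cdot\nabla v=-(p-1)\chi\int_\Omega\big(\int_0^{u}\tau^{p-1}d\tau\big)\Delta v$ and then dominate $\int_0^u\tau^{p-1}d\tau=\tfrac{1}{p}u^{p}$, obtaining a bound in terms of $\int_\Omega u^{p}|\Delta v|$. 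This yields a differential inequality of the form $\frac{1}{p}\frac{d}{dt}\|u\|_{L^p}^p+\frac{4(p-1)}{p^2}\|\nabla u^{p/2}\|_{L^2}^2+\mu\int_\Omega u^{p+1}\leq\chi\int_\Omega u^{p}|\Delta v|+a\int_\Omega u^{p}$.

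The crux is handling the coupling term $\chi\int_\Omega u^{p}|\Delta v|$. The idea is to use Young's inequality with exponents $\tfrac{N}{2}+1$ and its conjugate, splitting it so that the $u$-factor lands on $\int_\Omega u^{p+1}$ (to be absorbed by the good $-\mu\int_\Omega u^{p+1}$) while the $\Delta v$-factor produces $\int_\Omega|\Delta v|^{\frac{N}{2}+1}$ with a carefully chosen coefficient. Concretely, for any $y>0$ I would write $\chi\int_\Omega u^{p}|\Delta v|\leq y\int_\Omega u^{p+1}+A_1 y^{-\delta}\chi^{\delta+1}\int_\Omega|\Delta v|^{\delta+1}$ with $\delta=\frac{N}{2}$, where the constant $A_1$ is exactly the one from \dref{zjscz2.5297x9630222211444125} arising from the optimal Young constant. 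Next I would integrate the differential inequality in time against the weight $e^{\gamma s}$ and apply the maximal regularity estimate \dref{cz2.5bbv114} of Lemma \ref{lemma45xy1222232} with $g=u-v$ and $\gamma=\delta+1=\frac{N}{2}+1$; this converts $\int|\Delta v|^{\delta+1}$ back into a controllable multiple of $\int u^{\delta+1}$ plus data, with the multiplicative constant $C_{\delta+1}$. Minimizing the combined coefficient of the $u$-term over $y$ via Lemma \ref{lemma45630223116} produces exactly $\frac{(\delta-1)}{\delta}C_{\delta+1}^{1/(\delta+1)}\chi=\frac{(N-2)}{N}\chi C_{\frac{N}{2}+1}^{\frac{1}{\frac{N}{2}+1}}$, so the hypothesis on $\mu$ guarantees strict absorption.

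After absorption, I expect to arrive at an inequality of the form $\frac{d}{dt}\Big(\|u\|_{L^p}^p e^{\gamma t}\Big)+\big(\mu-\tfrac{(N-2)}{N}\chi C_{\frac{N}{2}+1}^{1/(\delta+1)}\big)\,e^{\gamma t}\!\int_\Omega u^{p+1}\leq C e^{\gamma t}$ (modulo lower-order $a u^p$ terms handled by Young's inequality), from which a Grönwall/ODE comparison gives the uniform bound $\int_\Omega u^p\leq C$ on $(0,T_{max})$, using the initial control \dref{eqx45xx12112} on $[0,s_0]$ to start the estimate. The main obstacle will be organizing the weighted-in-time maximal regularity bookkeeping so that the constant emerging from Young's inequality coincides \emph{exactly} with $A_1$ in Lemma \ref{lemma45630223116}; getting this matching right is precisely what makes the threshold sharp rather than merely ``$\mu$ large.'' A secondary technical point is that the Gagliardo--Nirenberg/regularity chain must be carried out uniformly for \emph{all} $p>1$ so that this lemma can serve, together with \dref{eqx45xx12112}, as the starting estimate for the Moser iteration leading to the $L^\infty$ bound in the proof of Theorem \ref{theorem3}.
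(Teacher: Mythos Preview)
Your plan contains a genuine gap in the Young-inequality step. You propose, for \emph{arbitrary} $p>1$, to split
\[
\chi\int_\Omega u^{p}|\Delta v|\;\leq\; y\int_\Omega u^{p+1}+A_1\,y^{-\delta}\chi^{\delta+1}\int_\Omega |\Delta v|^{\delta+1}
\quad\text{with }\delta=\tfrac{N}{2}.
\]
But Young's inequality with conjugate exponents $\tfrac{\delta+1}{\delta}$ and $\delta+1$ applied to $u^{p}\cdot|\Delta v|$ produces $u^{p(\delta+1)/\delta}$ on the first term, which equals $u^{p+1}$ \emph{only when $p=\delta$}. In other words, the exponent $\delta$ in Lemma~\ref{lemma45630223116} is forced to coincide with the testing exponent, and the maximal-regularity constant that appears is $C_{p+1}$, not $C_{\frac{N}{2}+1}$. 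After the minimization of Lemma~\ref{lemma45630223116} the coefficient in front of $\int u^{p+1}$ is $\tfrac{p-1}{p}\chi C_{p+1}^{1/(p+1)}$, and the hypothesis $\mu>\tfrac{(N-2)_+}{N}\chi C_{\frac{N}{2}+1}^{1/(\frac{N}{2}+1)}$ gives you no control of this for large $p$. (A related slip: by replacing $\tfrac{p-1}{p}\chi\int u^{p}|\Delta v|$ with $\chi\int u^{p}|\Delta v|$ you have already thrown away the factor $\tfrac{p-1}{p}$ that is built into $A_1$, so even at $p=\tfrac{N}{2}$ the constants would no longer match.)

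The paper's proof therefore proceeds in \emph{two} stages, which your proposal collapses into one. First, it runs exactly your argument but with $r=\delta$ throughout, obtaining after Lemma~\ref{lemma45630223116} the coefficient $\tfrac{r-1}{r}\chi C_{r+1}^{1/(r+1)}$; by continuity in $r$ one may pick some $q_0>\tfrac{N}{2}$ for which $\mu>\tfrac{q_0-1}{q_0}\chi C_{q_0+1}^{1/(q_0+1)}$, and this yields a uniform $L^{q_0}$ bound on $u$ (see \dref{cz2.5kk1214114114rrggkklljjuu}). Second, this $L^{q_0}$ bound with $q_0>\tfrac{N}{2}$ is fed into heat-semigroup estimates for $v$ to obtain $\nabla v\in L^\infty_t L^q_x$ for all $q<\tfrac{Nq_0}{(N-q_0)_+}$ (see \dref{ffgbbcz2.5ghjusseeeddd48cfg924ghyuji}); only then is the equation re-tested with $u^{p-1}$ for arbitrary $p$, now estimating the cross term via $\int u^{p}|\nabla v|^{2}$ and closing by H\"older plus Gagliardo--Nirenberg (see \dref{cz2.57151hhkkhhhjukildrfthjjhhhhh}--\dref{cz2aasweeeddfff.5ssedfssddff114114}). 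Your sketch is essentially the first stage done correctly at $r\approx\tfrac{N}{2}$; what is missing is the bootstrap from $L^{q_0}$ to $L^{p}$ for all $p$.
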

\begin{proof}
Multiplying the first equation of \dref{1.1}
  by $u^{{r}-1}$ and integrating over $\Omega$, 
 we get
\begin{equation}
\begin{array}{rl}
&\disp{\frac{1}{{r}}\frac{d}{dt}\|u\|^{{r}}_{L^{{r}}(\Omega)}+({{r}-1})\int_{\Omega}u^{{{r}-2}}|\nabla u|^2}
\\
=&\disp{-\chi\int_\Omega \nabla\cdot( u\nabla v)
  u^{{r}-1} +
\int_\Omega   u^{{r}-1}(au-\mu u^2)~~\mbox{for all}~~ t\in(0, T_{max}) ,}\\
\end{array}
\label{cz2.5114114}
\end{equation}
that is,
\begin{equation}
\begin{array}{rl}
&\disp{\frac{1}{{r}}\frac{d}{dt}\|u\|^{{{r}}}_{L^{{r}}(\Omega)}}
\\
\leq&\disp{-\frac{{r}+1}{{r}}\int_{\Omega} u^{r} -\chi\int_\Omega \nabla\cdot( u\nabla v)
  u^{{r}-1} }\\
 &+\disp{\int_\Omega \left(\frac{{r}+1}{{r}} u^{r}+  u^{{r}-1}(au-\mu u^2)\right)
 ~~\mbox{for all}~~ t\in(0, T_{max}).}\\
\end{array}
\label{cz2.5kk1214114114}
\end{equation}
Hence, by the Young inequality, it reads that
\begin{equation}
\begin{array}{rl}
&\disp{\int_\Omega  \left(\frac{{r}+1}{{r}} u^{r}+ u^{{r}-1}(au-\mu u^2)\right) }\\
\leq &\disp{\frac{{r}+1}{{r}}\int_\Omega u^{r} +a\int_\Omega    u^{r}- \mu\int_\Omega  u^{{{r}+1}}}\\
\leq &\disp{(\varepsilon_1- \mu)\int_\Omega u^{{{r}+1}} +C_1(\varepsilon_1,{r}),}
\end{array}
\label{cz2.563011228ddff}
\end{equation}
where $$C_1(\varepsilon_1,{r})=\frac{1}{{r}+1}\left(\varepsilon_1\frac{{r}+1}{{r}}\right)^{-{r} }
\left(\frac{{r}+1}{{r}}+a\right)^{{r}+1 }|\Omega|.$$
%
%

Next,
integrating by parts to the first term on the right hand side of \dref{cz2.5114114},  using  the Young inequality and \dref{1.ffggvbbnxxccvvn1},
we obtain 
\begin{equation}
\begin{array}{rl}
&\disp{-\chi\int_\Omega \nabla\cdot( u\nabla v)u^{{r}-1} }
\\
=&\disp{({{r}-1})\chi\int_\Omega  u^{{{r}-1}}\nabla u\cdot\nabla v }
\\
=&\disp{-\frac{{{r}-1}}{{r}}\chi \int_\Omega u^{{r}}\Delta v }
\\
\leq&\disp{\frac{{{r}-1}}{{r}}\chi \int_\Omega u^{r}|\Delta v|~~\mbox{for all}~~ t\in(0, T_{max}). }
\\
\end{array}
\label{cz2.563019114}
\end{equation}
Now, let \begin{equation}
\lambda_0:=\left(A_1C_{{r}+1}{r}\right)^{\frac{1}{{r}+1}}\chi,
\label{cz2.56ssd30er191rrrr14}
\end{equation}
where ${A}_1$ is given by \dref{zjscz2.5297x9630222211444125}.
While from \dref{cz2.563019114} and the Young inequality, we
have
\begin{equation}
\begin{array}{rl}
&\disp{-\chi\int_\Omega \nabla\cdot( u\nabla v)u^{{r}-1}  }
\\
\leq&\disp{\lambda_0\int_\Omega  u^{{r}+1}+\frac{1}{ {r}+1}\left[\lambda_0\frac{ {r}+1}{ {r}}\right]^{- {r} }\left(\frac{{r}-1}{{r}}\chi \right)^{ {r}+1}\int_\Omega |\Delta v|^{ {r}+1} }
\\
=&\disp{\lambda_0\int_\Omega  u^{{r}+1}+{A}_1\lambda_0^{- {r} }\chi^{ {r}+1}\int_\Omega |\Delta v|^{ {r}+1} ~~\mbox{for all}~~ t\in(0, T_{max}).}
\\
\end{array}
\label{cz2.563019114gghh}
\end{equation}
Thus, inserting \dref{cz2.563011228ddff} and \dref{cz2.563019114gghh} into \dref{cz2.5kk1214114114}, we get
\begin{equation*}
\begin{array}{rl}
\disp\frac{1}{{r}}\disp\frac{d}{dt}\|u\|^{{{r}}}_{L^{{r}}(\Omega)}\leq&\disp{(\varepsilon_1+\lambda_0- \mu)\int_\Omega u^{{{r}+1}} -\frac{{r}+1}{{r}}\int_{\Omega} u^{r} }\\
&+\disp{{A}_1\lambda_0^{- {r} }\chi^{ {r}+1}\int_\Omega |\Delta v|^{ {r}+1} +
C_1(\varepsilon_1,{r})~~\mbox{for all}~~ t\in(0, T_{max}).}\\
\end{array}
\end{equation*}
For any $t\in (s_0,T_{max})$,
employing the variation-of-constants formula to the above inequality, we obtain
\begin{equation}
\begin{array}{rl}
&\disp{\frac{1}{{r}}\|u(t) \|^{{{r}}}_{L^{{r}}(\Omega)}}
\\
\leq&\disp{\frac{1}{{r}}e^{-( {r}+1)(t-s_0)}\|u(s_0) \|^{{{r}}}_{L^{{r}}(\Omega)}+(\varepsilon_1+\lambda_0- \mu)\int_{s_0}^t
e^{-( {r}+1)(t-s)}\int_\Omega u^{{{r}+1}}}\\
&+\disp{{A}_1\lambda_0^{- {r} }\chi^{ {r}+1}\int_{s_0}^t
e^{-( {r}+1)(t-s)}\int_\Omega |\Delta v|^{ {r}+1} + C_1(\varepsilon_1,{r})\int_{s_0}^t
e^{-( {r}+1)(t-s)}}\\
\leq&\disp{(\varepsilon_1+\lambda_0- \mu)\int_{s_0}^t
e^{-( {r}+1)(t-s)}\int_\Omega u^{{{r}+1}}}\\
&+\disp{{A}_1\lambda_0^{- {r} }\chi^{ {r}+1}\int_{s_0}^t
e^{-( {r}+1)(t-s)}\int_\Omega |\Delta v|^{ {r}+1}+C_2({r},\varepsilon_1),}\\
\end{array}
\label{cz2.5kk1214114114rrgg}
\end{equation}
where
$$C_2:=C_2({r},\varepsilon_1)=\frac{1}{{r}}\|u(s_0) \|^{{{r}}}_{L^{{r}}(\Omega)}+
 C_1(\varepsilon_1,{r})\int_{s_0}^t
e^{-( {r}+1)(t-s)}ds$$
and $s_0$ is the same as \dref{eqx45xx12112}.

Now, by Lemma \ref{lemma45xy1222232}, we have
\begin{equation}\label{cz2.5kk1214114114rrggjjkk}
\begin{array}{rl}
&\disp{{A}_1\lambda_0^{- {r} }\chi^{ {r}+1}\int_{s_0}^t
e^{-( {r}+1)(t-s)}\int_\Omega |\Delta v|^{ {r}+1}}
\\
=&\disp{{A}_1\lambda_0^{- {r} }\chi^{ {r}+1}e^{-( {r}+1)t}\int_{s_0}^t
e^{( {r}+1)s}\int_\Omega |\Delta v|^{ {r}+1} }\\
\leq&\disp{{A}_1\lambda_0^{- {r} }\chi^{ {r}+1}e^{-( {r}+1)t}C_{ {r}+1}[\int_{s_0}^t
\int_\Omega e^{( {r}+1)s}u^{ {r}+1} }\\
&+\disp{e^{( {r}+1)s_0}(\|v(\cdot,s_0)\|^{{r}+1}_{L^{{r}+1}(\Omega)}+\|\Delta v(\cdot,s_0)\|^{{r}+1}_{L^{{r}+1}(\Omega)})]}\\
\end{array}
\end{equation}
for all $t\in(s_0, T_{max})$.
By substituting \dref{cz2.5kk1214114114rrggjjkk} into \dref{cz2.5kk1214114114rrgg}, using \dref{cz2.56ssd30er191rrrr14} and Lemma \ref{lemma45630223116}, we get
\begin{equation}
\begin{array}{rl}
&\disp{\frac{1}{{r}}\|u(t) \|^{{{r}}}_{L^{{r}}(\Omega)}}
\\
\leq&\disp{(\varepsilon_1+\lambda_0+{A}_1\lambda_0^{- {r} }\chi^{ {r}+1}C_{ {r}+1}- \mu)\int_{s_0}^t
e^{-( {r}+1)(t-s)}\int_\Omega u^{{{r}+1}} }\\
&+\disp{{A}_1\lambda_0^{- {r} }\chi^{ {r}+1}e^{-( {r}+1)(t-s_0)}C_{ {r}+1}(\|v(\cdot,s_0)\|^{{r}+1}_{L^{{r}+1}(\Omega)}+\|\Delta v(\cdot,s_0)\|^{{r}+1}_{L^{{r}+1}(\Omega)})+C_2({r},
\varepsilon_1)}\\
=&\disp{(\varepsilon_1+\frac{({r}-1)}{{r}}C_{{r}+1}^{\frac{1}{{r}+1}}\chi- \mu)\int_{s_0}^t
e^{-( {r}+1)(t-s)}\int_\Omega u^{{{r}+1}} }\\
&+\disp{{A}_1\lambda_0^{- {r} }\chi^{ {r}+1}e^{-( {r}+1)(t-s_0)}C_{ {r}+1}(\|v(\cdot,s_0)\|^{{r}+1}_{L^{{r}+1}(\Omega)}+\|\Delta v(\cdot,s_0)\|^{{r}+1}_{L^{{r}+1}(\Omega)})+C_2({r},
\varepsilon_1).}\\
\end{array}
\label{cz2.5kk1214114114rrggkkll}
\end{equation}
Since, $\mu>\frac{(N-2)_+}{N}\chi C_{\frac{N}{2}+1}^{\frac{1}{\frac{N}{2}+1}}$, we may choose  $r:={q_0}>\frac{N}{2}$ in \dref{cz2.5kk1214114114rrggkkll}
such that
$$\mu>\frac{{q_0}-1}{{q_0}}\chi C_{{q_0}+1}^{\frac{1}{{q_0}+1}},$$
thus, pick $\varepsilon_1$ appropriating  small such that
$$0<\varepsilon_1<\mu -\frac{{q_0}-1}{{q_0}}\chi C_{{q_0}+1}^{\frac{1}{{q_0}+1}},$$
then in light of \dref{cz2.5kk1214114114rrggkkll}, we derive that there exists a positive constant $C_3$
such that
\begin{equation}
\begin{array}{rl}
&\disp{\int_{\Omega}u^{{q_0}}(x,t) dx\leq C_3~~\mbox{for all}~~t\in (s_0, T_{max}).}\\
\end{array}
\label{cz2.5kk1214114114rrggkklljjuu}
\end{equation}
Next,
we fix $q <\frac{N{q_0}}{(N-{q_0})^+}$
and choose some
 $\alpha> \frac{1}{2}$ such that
\begin{equation}
q <\frac{1}{\frac{1}{q_0}-\frac{1}{N}+\frac{2}{N}(\alpha-\frac{1}{2})}\leq\frac{N{q_0}}{(N-{q_0})^+}.
\label{fghgbhnjcz2.5ghju48cfg924ghyuji}
\end{equation}

Now, involving the variation-of-constants formula
for $v$, we have
\begin{equation}
v(t)=e^{-\tau(A+1)}v(s_0) +\int_{s_0}^{t}e^{-(t-s)(A+1)}u(s) ds,~~ t\in(s_0, T_{max}),
\label{fghbnmcz2.5ghju48cfg924ghyuji}
\end{equation}
where $A := A_p$ denote the sectorial operator defined by
$$
A_pu :=-\Delta u~~\mbox{for all}~~u\in D(A_p) :=\{\varphi\in W^{2,p}(\Omega)|\frac{\partial\varphi}{\partial \nu}|_{\partial\Omega}=0\}.
$$
Hence, it follows from \dref{eqx45xx12112} and  
 \dref{fghbnmcz2.5ghju48cfg924ghyuji} that
\begin{equation}
\begin{array}{rl}
&\disp{\|(A+1)^\alpha v(t)\|_{L^q(\Omega)}}\\
\leq&\disp{C_4\int_{s_0}^{t}(t-s)^{-\alpha-\frac{N}{2}(\frac{1}{q_0}-\frac{1}{q})}e^{-\mu(t-s)}\|u(s)\|_{L^{q_0}(\Omega)}ds+
C_4s_0^{-\alpha-\frac{N}{2}(1-\frac{1}{q})}\|v(s_0,t)\|_{L^1(\Omega)}}\\
\leq&\disp{C_4\int_{0}^{+\infty}\sigma^{-\alpha-\frac{N}{2}(\frac{1}{q_0}-\frac{1}{q})}e^{-\mu\sigma}d\sigma
+C_4s_0^{-\alpha-\frac{N}{2}(1-\frac{1}{q})}K,}\\
\end{array}
\label{gnhmkfghbnmcz2.5ghju48cfg924ghyuji}
\end{equation}
where  $s_0$ is the same as \dref{eqx45xx12112}.
Hence, due to \dref{fghgbhnjcz2.5ghju48cfg924ghyuji}  and \dref{gnhmkfghbnmcz2.5ghju48cfg924ghyuji}, we have
\begin{equation}
\int_{\Omega}|\nabla {v}(t)|^{q}\leq C_5~~\mbox{for all}~~ t\in(s_0, T_{max})
\label{ffgbbcz2.5ghju48cfg924ghyuji}
\end{equation}
and $q\in[1,\frac{N{q_0}}{(N-{q_0})^+})$.
Finally, in view of \dref{eqx45xx12112} and \dref{ffgbbcz2.5ghju48cfg924ghyuji},
 we can get \begin{equation}
\int_{\Omega}|\nabla {v}(t)|^{q}\leq C_6~~\mbox{for all}~~ t\in(0, T_{max})~~\mbox{and}~~q\in[1,\frac{N{q_0}}{(N-{q_0})^+}).
\label{ffgbbcz2.5ghjusseeeddd48cfg924ghyuji}
\end{equation}
with some positive constant $C_6.$

Multiplying both sides of the first equation in \dref{1.1} by $u^{p-1}$, integrating over $\Omega$ and integrating by parts, we arrive at
\begin{equation}
\begin{array}{rl}
&\disp{\frac{1}{{p}}\frac{d}{dt}\|u\|^{{p}}_{L^{{p}}(\Omega)}+({{p}-1})\int_{\Omega}u^{{{p}-2}}|\nabla u|^2}
\\
=&\disp{-\chi\int_\Omega \nabla\cdot( u\nabla v)
  u^{{p}-1} +
\int_\Omega   u^{{p}-1}(au-\mu u^2) }\\
=&\disp{\chi({p}-1)\int_\Omega  u^{{p}-1}\nabla u\cdot\nabla v
   +
\int_\Omega   u^{{p}-1}(au-\mu u^2) ,}\\
\end{array}
\label{cz2aasweee.5114114}
\end{equation}
which together with the Young inequality and \dref{1.ffggvbbnxxccvvn1} implies that
\begin{equation}
\begin{array}{rl}
&\disp{\frac{1}{{p}}\frac{d}{dt}\|u\|^{{p}}_{L^{{p}}(\Omega)}+({{p}-1})\int_{\Omega}u^{{{p}-2}}|\nabla u|^2}
\\
\leq&\disp{\frac{{{p}-1}}{2}\int_{\Omega}u^{{{p}-2}}|\nabla u|^2+
\frac{\chi^2({p}-1)}{2}\int_\Omega  u^{{p}}|\nabla v|^2
   -\frac{\mu }{2}\int_\Omega u^{p+1}+ C_7}\\
\end{array}
\label{cz2aasweee.5ssedfff114114}
\end{equation}
for some positive constant $C_7.$
Since, $q_0>\frac{N}{2}$ yields $q_0<\frac{N{q_0}}{2(N-{q_0})^+}$, in light of the H\"{o}lder inequality and \dref{ffgbbcz2.5ghjusseeeddd48cfg924ghyuji}, we derive   at
\begin{equation}
\begin{array}{rl}
 \disp\frac{\chi^2({p}-1)}{2}\disp\int_\Omega{{u^{p }}} |\nabla v|^2\leq&\disp{ \disp\frac{\chi^2({p}-1)}{2}\left(\disp\int_\Omega{{u^{\frac{q_0}{q_0-1} p }}}\right)^{\frac{q_0-1}{q_0}}\left(\disp\int_\Omega |\nabla {v}|^{2q_0}\right)^{\frac{1}{q_0}}}\\
\leq&\disp{C_8\|  {{u^{\frac{p}{2}}}}\|^{2}_{L^{2\frac{q_0}{q_0-1} }(\Omega)},}\\
\end{array}
\label{cz2.57151hhkkhhhjukildrfthjjhhhhh}
\end{equation}
where $C_8$ is a positive constant.
Since 
$q_0> \frac{N}{2}$ and $p>q_0-1$,
we have
$$\frac{q_0}{p}\leq\frac{q_0}{q_0-1}\leq\frac{N}{N-2},$$
which together with the Gagliardo--Nirenberg inequality
 implies that
\begin{equation}
\begin{array}{rl}
C_8\|  {{u^{\frac{p}{2}}}}\|
^{2}_{L^{2\frac{q_0}{q_0-1} }(\Omega)}\leq&\disp{C_9(\|\nabla   {{u^{\frac{p}{2}}}}\|_{L^2(\Omega)}^{\mu_1}\|  {{u^{\frac{p}{2}}}}\|_{L^\frac{2q_0}{p}(\Omega)}^{1-\mu_1}+\|  {{u^{\frac{p}{2}}}}\|_{L^\frac{2q_0}{p}(\Omega)})^{2}}\\
\leq&\disp{C_{10}(\|\nabla   {{u^{\frac{p}{2}}}}\|_{L^2(\Omega)}^{2\mu_1}+1)}\\
=&\disp{C_{10}(\|\nabla   {{u^{\frac{p}{2}}}}\|_{L^2(\Omega)}^{\frac{2N(p-q_0+1)}{Np+2q_0-Nq_0}}+1)}\\
\end{array}
\label{cz2.563022222ikopl2sdfg44}
\end{equation}
with some positive constants $C_9, C_{10}$ and
$$\mu_1=\frac{\frac{N{p}}{2q_0}-\frac{Np}{2\frac{q_0}{q_0-1} p }}{1-\frac{N}{2}+\frac{N{p}}{2q_0}}=
{p}\frac{\frac{N}{2q_0}-\frac{N}{2\frac{q_0}{q_0-1} p }}{1-\frac{N}{2}+\frac{N{p}}{2q_0}}\in(0,1).$$
Now, in view of the Young inequality, we derive that
\begin{equation}
\begin{array}{rl}
\disp\frac{\chi^2({p}-1)}{2}\disp\int_\Omega  u^{{p}}|\nabla v|^2 &\leq\disp{\frac{{{p}-1}}{4}\int_{\Omega}u^{{{p}-2}}
|\nabla u|^2+C_{11}.}\\
\end{array}
\label{cz2aasweeeddfff.5ssedfssddff114114}
\end{equation}
Inserting \dref{cz2aasweeeddfff.5ssedfssddff114114} into \dref{cz2aasweee.5ssedfff114114}, we conclude that
\begin{equation}
\begin{array}{rl}
&\disp{\frac{1}{{p}}\frac{d}{dt}\|u\|^{{p}}_{L^{{p}}(\Omega)}+\frac{{{p}-1}}{4}\int_{\Omega}u^{{{p}-2}}|\nabla u|^2+
\frac{\mu}{2}\int_\Omega u^{p+1}\leq C_{12}.}\\
\end{array}
\label{cz2aasweee.5ssedfff114114}
\end{equation}
Therefore, letting
  $y:=\disp\int_{\Omega}u^{p}$ 
in \dref{cz2aasweee.5ssedfff114114} yields to
$$\frac{d}{dt}y(t)+C_{13}y^h(t)\leq C_{14}~~\mbox{for all}~~ t\in(0,T_{max})$$
with some positive constant $h$.
 Thus a standard ODE comparison argument implies
\begin{equation}
\begin{array}{rl}
\|u(\cdot, t)\|_{L^{{p}}(\Omega)}\leq C_{15} ~~ \mbox{for all}~~p\geq1~~\mbox{and}~~  t\in(0,T_{max}) \\
\end{array}
\label{cz2.5g556789hhjui78jj90099}
\end{equation}
for some positive constant $C_{15}$.
%
%
The proof Lemma \ref{lemma45630223} is completed.
\end{proof}
Our main result on global existence and boundedness thereby becomes a straightforward consequence
of Lemma \ref{1118lemma70} and  Lemma \ref{lemma45630223}.

{\bf The proof of Theorem \ref{theorem3}}~
Theorem \ref{theorem3} will be proved if we can show $T_{max}=\infty$. Suppose on contrary that $T_{max}<\infty$.
Due to
$\|u(\cdot, t)\|_{L^p(\Omega)}$ is bounded for any large $p$, we infer from the fundamental estimates for Neumann semigroup
(see Lemma 4.1 of \cite{Horstmann791}) or the standard regularity theory of parabolic equation (see e.g. Ladyzenskaja et al.  \cite{Ladyzenskaja710}) that
\begin{equation}
\begin{array}{rl}
\|\nabla v(\cdot, t)\|_{L^{{\infty}}(\Omega)}\leq C_{1} ~~ \mbox{for all}~~~  t\in(0,T_{max}) \\
\end{array}
\label{cz2sedfgg.5g5gghh56789hhjui7ssddd8jj90099}
\end{equation}
and some positive constant $C_1.$

%
Upon an application of the well-known Moser-Alikakos iteration procedure (see Lemma A.1 in \cite{Tao794}),
we see that
\begin{equation}
\begin{array}{rl}
\|u(\cdot, t)\|_{L^{{\infty}}(\Omega)}\leq C_{2} ~~ \mbox{for all}~~~  t\in(0,T_{max}) \\
\end{array}
\label{cz2.5g5gghh56789hhjui78jj90099}
\end{equation}
and a positive constant $C_{2}$.

In view of \dref{cz2sedfgg.5g5gghh56789hhjui7ssddd8jj90099} and \dref{cz2.5g5gghh56789hhjui78jj90099}, we apply Lemma \ref{1118lemma70} to reach a contradiction.
 Hence the  classical solution $(u,v)$ of \dref{1.1} is global in time and bounded. Finally, employing  the same arguments as in the proof of Lemma 1.1 in \cite{Winkler37103}, and taking advantage of \dref{cz2.5g5gghh56789hhjui78jj90099}, we conclude the uniqueness of solution to \dref{1.1}.

 \section{Decay. Proof of Theorem \ref{theoremfgb3}}
In this section we study the long-time behavior for \dref{1.1} in the case $a = 0$.
As the first step, we give the decay property separately for the integrals of the solution components
$u$ and $v$.
\begin{lemma}\label{fghfbglemma4563025xxhjklojjkkkgyhuissddff}
Let $\Omega\subset\mathbb{R}^N(N\geq1)$ be a smooth bounded  domain.
Assume that $a= 0$. 
 Then we have
\begin{equation}
\begin{array}{rl}\label{gbhnggeqx45xx1211}
&\disp{\int_{\Omega}{{u}}(x,t)\leq\left(\left(\int_{\Omega}{u}_0(x)\right)^{-1}+\mu|\Omega|^{-1}t\right)^{{-1}}~~~\mbox{for all}~~t\in(0,\infty).}
\\
\end{array}
\end{equation}
\end{lemma}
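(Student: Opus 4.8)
The plan is to reduce the assertion to a scalar ordinary differential inequality for the total mass $y(t):=\int_{\Omega}u(\cdot,t)$ and then to integrate it explicitly. First I would integrate the first equation of \dref{1.1} over $\Omega$. With $a=0$ the reaction term is exactly $-\mu u^2$; the diffusion contribution $\int_{\Omega}\Delta u$ vanishes by the Neumann condition $\frac{\partial u}{\partial\nu}=0$, and the chemotaxis contribution also vanishes since $\int_{\Omega}\nabla\cdot(u\nabla v)=\int_{\partial\Omega}u\frac{\partial v}{\partial\nu}=0$ on account of $\frac{\partial v}{\partial\nu}=0$. This produces the mass identity $\frac{d}{dt}\int_{\Omega}u=-\mu\int_{\Omega}u^2$ for all $t>0$, whose legitimacy is guaranteed by the global bounded classical solvability established in Theorem \ref{theorem3}.

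Second, I would estimate the dissipation from below by the Cauchy--Schwarz inequality $\big(\int_{\Omega}u\big)^2\le|\Omega|\int_{\Omega}u^2$, i.e. $\int_{\Omega}u^2\ge|\Omega|^{-1}y^2$. Combined with the mass identity this yields the autonomous differential inequality $y'(t)\le-\mu|\Omega|^{-1}y(t)^2$ for all $t>0$.

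Third, I would solve this Bernoulli-type inequality. Assuming $u_0\not\equiv0$ (the case $u_0\equiv0$ being trivial, as then $u\equiv0$ and both sides of the claimed bound are zero under the convention $0^{-1}=+\infty$), the strong maximum principle gives $y(t)>0$, so dividing by $y^2$ turns the inequality into $\frac{d}{dt}\big(y(t)^{-1}\big)=-y'(t)y(t)^{-2}\ge\mu|\Omega|^{-1}$. Integrating over $(0,t)$ gives $y(t)^{-1}\ge y(0)^{-1}+\mu|\Omega|^{-1}t$, and passing back to $y$ delivers exactly $\int_{\Omega}u(\cdot,t)\le\big((\int_{\Omega}u_0)^{-1}+\mu|\Omega|^{-1}t\big)^{-1}$, which is \dref{gbhnggeqx45xx1211}.

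There is no genuine analytic obstacle here: the argument is an elementary mass balance followed by an ODE comparison. The only points that warrant care are the justification of the two integrations by parts, which is furnished by the classical regularity from Theorem \ref{theorem3}, and the strict positivity $y(0)>0$ required in order to pass to the reciprocal variable; I would treat the degenerate datum $u_0\equiv0$ separately and otherwise invoke the strong maximum principle.
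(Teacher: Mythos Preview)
Your proposal is correct and follows essentially the same route as the paper: integrate the $u$-equation over $\Omega$ to obtain $\frac{d}{dt}\int_\Omega u=-\mu\int_\Omega u^2$, apply Cauchy--Schwarz (the paper says H\"older) to get $y'\le-\mu|\Omega|^{-1}y^2$, and integrate the reciprocal. Your additional remarks on the justification via Theorem~\ref{theorem3} and the trivial case $u_0\equiv0$ are appropriate refinements but do not alter the argument.
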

\begin{proof}
Let $t > 0$ and $s\in(0, t)$. Since $a= 0$, it follows from an integration by parts to the first
equation in \dref{1.1}
and the H\"{o}lder inequality that
\begin{equation}
\begin{array}{rl}\label{jhjeqx45xx1211}
&\disp{\frac{d}{ds}\int_{\Omega}{{u}}(x,s) =-\mu\int_{\Omega}{{u}^{2}}(x,s)\leq-\mu|\Omega|^{-1}\left(\int_{\Omega}{{u}}(x,s)\right)^{2}~~\mbox{for all}~~ s\in(0, t),}
\\
\end{array}
\end{equation}
which implies that
\begin{equation}
\begin{array}{rl}\label{ggeqx45xx1211}
&\disp{\left(\int_{\Omega}{{u}}(x,t)\right)^{-1}-\left(\int_{\Omega}{u}_0(x)\right)^{-1}\geq\mu|\Omega|^{-1}t,}
\\
\end{array}
\end{equation}
which in light of \dref{ggeqx45xx1211} implies that  \dref{gbhnggeqx45xx1211}  holds.
\end{proof}

As a consequence, we obtain a basic decay property also for the second solution component.
\begin{lemma}\label{fhhghfbglemma4563025xxhjklojjkkkgyhuissddff}
Let $\Omega\subset\mathbb{R}^N(N\geq1)$ be a smooth bounded  domain.
There exists $C > 0$ such that
\begin{equation}\label{kmlgbhnggeqx45xx1211}
\int_{\Omega}{{v}}(x,t)\leq\frac{C}{(1+t)}~~~\mbox{for all}~~ t\in(0, T_{max}).
\end{equation}
\end{lemma}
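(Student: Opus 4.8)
The plan is to reduce the claim to a scalar ODE for the total mass $y(t):=\int_\Omega v(\cdot,t)$ and then to feed in the decay of $\int_\Omega u$ already supplied by Lemma \ref{fghfbglemma4563025xxhjklojjkkkgyhuissddff}. First I would integrate the second equation of \dref{1.1} over $\Omega$; since the homogeneous Neumann condition forces $\int_\Omega\Delta v=0$, this gives the exact identity
\begin{equation*}
\frac{d}{dt}\int_\Omega v(\cdot,t)=-\int_\Omega v(\cdot,t)+\int_\Omega u(\cdot,t)\qquad\text{for all }t\in(0,T_{max}).
\end{equation*}
Next I would recast the bound \dref{gbhnggeqx45xx1211} of Lemma \ref{fghfbglemma4563025xxhjklojjkkkgyhuissddff} in the more convenient algebraic form $\int_\Omega u(\cdot,t)\le \frac{C_0}{1+t}$, with $C_0:=\max\{\int_\Omega u_0,\,|\Omega|/\mu\}$; this is just an elementary comparison of the two affine functions of $t$ in the respective denominators, using $C_0\,(\int_\Omega u_0)^{-1}\ge 1$ and $C_0\,\mu|\Omega|^{-1}\ge 1$.

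With this at hand, the variation-of-constants representation for the linear ODE above yields
\begin{equation*}
\int_\Omega v(\cdot,t)=e^{-t}\int_\Omega v_0+\int_0^t e^{-(t-s)}\int_\Omega u(\cdot,s)\,ds\le e^{-t}\int_\Omega v_0+C_0\int_0^t e^{-(t-s)}\frac{ds}{1+s}.
\end{equation*}
The first term is harmless, since $(1+t)e^{-t}$ is bounded on $[0,\infty)$, whence $e^{-t}\int_\Omega v_0\le \frac{C_1}{1+t}$. Everything therefore rests on the convolution integral, which I would estimate by splitting the range at $s=t/2$: on $[0,t/2]$ I use $\frac{1}{1+s}\le 1$ together with $e^{-(t-s)}\le e^{-t/2}$ to obtain an exponentially small, hence $O((1+t)^{-1})$, contribution, while on $[t/2,t]$ I use $\frac{1}{1+s}\le \frac{2}{1+t}$ and $\int_{t/2}^t e^{-(t-s)}\,ds\le 1$. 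Adding the two pieces gives $\int_0^t e^{-(t-s)}\frac{ds}{1+s}\le \frac{C}{1+t}$, and combining the estimates establishes \dref{kmlgbhnggeqx45xx1211}.

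The only genuinely delicate point is this convolution estimate, i.e.\ verifying that convolving the exponentially decaying kernel $e^{-(t-s)}$ against the merely algebraically decaying source $(1+s)^{-1}$ reproduces the rate $(1+t)^{-1}$ rather than degrading it; the dyadic split at $s=t/2$ is exactly what separates the region where the kernel is small from the region where the source is small. An alternative I would keep in reserve is to set $z(t):=(1+t)\int_\Omega v$, which satisfies $z'+\frac{t}{1+t}z=(1+t)\int_\Omega u\le C_0$, and then to use the integrating factor $e^{t}/(1+t)$ together with $\int_0^t \frac{e^s}{1+s}\,ds\le C\,\frac{e^t}{1+t}$ to conclude $z$ is bounded; this bypasses the split but replaces it with the asymptotics of an exponential-integral-type quantity, so I expect the direct split to give the cleanest presentation.
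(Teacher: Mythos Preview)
Your argument is correct. Both you and the paper start from the same scalar ODE
\[
z'(t)+z(t)=\int_\Omega u(\cdot,t)\le \frac{C_0}{1+t},
\]
but the extraction of the $O((1+t)^{-1})$ decay is done differently. The paper proceeds by ODE comparison: it writes down an explicit supersolution $\bar z(t)=C_2(t+2)^{-1}$, checks $\bar z(0)\ge z(0)$ and $\bar z'+\bar z-C_1(1+t)^{-1}\ge 0$, and concludes $z\le \bar z$. You instead use the Duhamel representation and estimate the convolution $\int_0^t e^{-(t-s)}(1+s)^{-1}\,ds$ directly via the split at $s=t/2$. Both are standard; the comparison approach is shorter once the ansatz is guessed, while your approach is more systematic and makes the mechanism (exponential kernel versus algebraic source) transparent without needing to divine the right supersolution. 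Your alternative route via $z(t)=(1+t)\int_\Omega v$ is in fact closer in spirit to the paper's supersolution argument, since both amount to verifying that $(1+t)z(t)$ (or $(t+2)z(t)$) stays bounded.
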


\begin{proof}
Let $z(t) :=\int_{\Omega}
 {{v}}(x, t)$ for $t\in[0, T_{max})$.
Then integrating the second equation in \dref{1.1},  we conclude  that there exists a positive constant $C_1 $
 such that
 \begin{equation}
\begin{array}{rl}\label{hhhjhjeqx45xx1211}
z'(t)=&\disp{-z(t)+\int_{\Omega}u(x,t)}
\\
\leq&\disp{-z(t)+C_1(1+t)^{{-1}}  ~~~\mbox{for all}~~ t\in(0, T_{max}).}\\
\end{array}
\end{equation}
 Put
$C_2 :=2\max\{\int_{\Omega}v_0(x),2\}$
and define
$$\bar{z}(t) :=C_2(t + 2)^{-1}~~~\mbox{for all}~~t\geq0.$$
Then $\bar{z}(0) = \frac{C_2}{2} \geq\int_{\Omega}v_0(x)=z_0$ and
 \begin{equation}
 \begin{array}{rl}\label{gbhnhhhjhjeqx45xx1211}
&\disp{\bar{z}'(t)+\bar{z}(t) -C_1(1+t)^{{-1}}}\\
 =&\disp{-C_2(t + 2)^{-2}
+C_2(t + 2)^{-1}-C_1(1+t)^{{-1}}}
\\
\geq&\disp{C_2(t + 2)^{-2}(\frac{1}{2}-\frac{1}{t + 2})+\frac{1}{2}(t + 2)^{-1}(C_2-2C_1
(t + 2)(1+t)^{{-1}})}\\
\geq&\disp{C_2(t + 2)^{-2}(\frac{1}{2}-\frac{1}{ 2})+\frac{1}{2}(t + 2)^{-1}[C_2-C_12^{2}
]}\\
\geq&\disp{0~~~\mbox{for all}~~ t>0.}\\
\end{array}
\end{equation}
With the help of the comparison, we thus infer that
 $z(t)\leq \bar{z}(t)$ for all $t\in(0, T_{max})$, which directly establishes \dref{kmlgbhnggeqx45xx1211}.
\end{proof}

In turning the basic decay information on $u$ from Lemma \ref{fghfbglemma4563025xxhjklojjkkkgyhuissddff} into the uniform convergence property
asserted in Theorem \ref{theoremfgb3}, we shall make use of the following H\"{o}lder estimate implied by the regularity
properties collected in the previous section.


\begin{lemma}\label{ffgghhfhhghfbglemma4563025xxhjklojjkkkgyhuissddff}
Let $\Omega\subset\mathbb{R}^N(N\geq1)$ be a smooth bounded  domain.
Let $a= 0$ and $\mu>\frac{(N-2)_+}{N}\chi C_{\frac{N}{2}+1}^{\frac{1}{\frac{N}{2}+1}}$.
Then with $(u, v)$ as given by Theorem \ref{theorem3}, we have
\begin{equation}\label{hnjgghhgbhnggeqx45xx1211}
\|u\|_{C^{\theta,\frac{\theta}{2}}(\bar{\Omega}\times[t,t+1))}\leq C~~ \mbox{for all}~~ t>1
\end{equation}
and some positive constant $\theta\in (0, 1)$ and $C > 0$.
\end{lemma}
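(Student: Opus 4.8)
The plan is to regard the first equation of \dref{1.1} (with $a=0$) as a single scalar parabolic equation in divergence form whose ingredients are all bounded uniformly in time, and then to invoke the standard De Giorgi--Nash--Moser Hölder regularity theory. The starting point is the global boundedness already secured in Theorem \ref{theorem3}: there is a constant $M>0$ with $\|u(\cdot,t)\|_{L^\infty(\Omega)}\leq M$ for all $t>0$, and, as recorded in \dref{cz2sedfgg.5g5gghh56789hhjui7ssddd8jj90099} (now valid for all $t\in(0,\infty)$ since $T_{max}=\infty$), also $\|\nabla v(\cdot,t)\|_{L^\infty(\Omega)}\leq C_1$ uniformly in time. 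These two time-independent bounds are precisely what will force the final estimate to be \emph{uniform} in the base time $t$.

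First I would rewrite the $u$-equation, using $a=0$, as
$$u_t=\nabla\cdot\big(\nabla u-\chi u\nabla v\big)-\mu u^2.$$
Setting $b(x,t):=-\chi u(x,t)\nabla v(x,t)$ and $f(x,t):=-\mu u^2(x,t)$, the boundedness of $u$ and $\nabla v$ yields $\|b\|_{L^\infty}\leq\chi M C_1$ and $\|f\|_{L^\infty}\leq\mu M^2$, both independent of time. Hence $u$ is a bounded solution of a uniformly parabolic divergence-form problem with principal part $\Delta u$, a bounded first-order term $\nabla\cdot b$, and a bounded source $f$, together with the homogeneous Neumann condition $\partial u/\partial\nu=0$ on $\partial\Omega$.

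For such a problem the interior and up-to-the-boundary Hölder estimates of Ladyzenskaja \etal \cite{Ladyzenskaja710} apply: there exist $\theta\in(0,1)$ and $C>0$, depending only on $\Omega$, $N$, $\chi$, $\mu$, $M$ and $C_1$ but not on the base time, such that on any parabolic cylinder of unit height $u$ enjoys a $C^{\theta,\frac{\theta}{2}}$ bound. Concretely, I would apply this on $\bar\Omega\times[t-1,t+1]$ for each fixed $t>1$; the hypothesis $t>1$ simply guarantees that a full backward parabolic neighbourhood is available, so the estimate does not feel the initial layer at $t=0$. Since the structural quantities $\chi M C_1$ and $\mu M^2$ are the same for every $t$, and the equation is invariant under time translation, the Hölder seminorm of $u$ on $\bar\Omega\times[t,t+1)$ is controlled by the single constant $C$ for all $t>1$, which is exactly \dref{hnjgghhgbhnggeqx45xx1211}.

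The main obstacle is the handling of the Neumann boundary condition within the De Giorgi--Nash--Moser framework and the confirmation that $\theta$ and $C$ may be chosen independently of $t$. The uniformity is bought precisely by the time-independence of the $L^\infty$-bounds on $u$ and $\nabla v$ together with the autonomy of the coefficients $b$ and $f$; once those bounds are in hand, the boundary regularity is the standard (if technically involved) part of the cited theory, reducible by a routine flattening/reflection of $\partial\Omega$ to the interior case. No new idea beyond the a priori $L^\infty$ control of $u$ and $\nabla v$ is required.
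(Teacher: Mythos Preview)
Your proposal is correct and follows essentially the same approach as the paper: rewrite the $u$-equation in divergence form $u_t=\nabla\cdot(\nabla u - h_1)+h_2$ with $h_1=\chi u\nabla v$ and $h_2=-\mu u^2$, observe that the global $L^\infty$ bounds on $u$ and $\nabla v$ from Theorem~\ref{theorem3} make $h_1,h_2$ uniformly bounded, and then invoke standard parabolic H\"older regularity to obtain a $C^{\theta,\theta/2}$ estimate uniform in $t$. The only cosmetic difference is that the paper cites Porzio--Vespri \cite{Porzio710} (and accordingly checks the structural inequalities $(\nabla u-h_1)\cdot\nabla u\geq\tfrac12|\nabla u|^2-\tfrac12|h_1|^2$ and $|\nabla u-h_1|\leq|\nabla u|+|h_1|$ required there) whereas you cite Ladyzenskaja \etal \cite{Ladyzenskaja710}; both references deliver the same conclusion here.
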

\begin{proof}
Firstly,
rewriting the first equation of \dref{1.1} in the form
\begin{equation}
\begin{array}{rl}\label{hhhssdddjhjeqsddffx45xx1211}
u_t=&\disp{\nabla\cdot(\nabla u-h_1(x,t))+h_2(x,t),~~x\in \Omega, t>0,}
\\
\end{array}
\end{equation}
where
$$h_1(x,t):=u(x,t)\nabla v(x,t)~~~\mbox{and}~~h_2(x,t):=au(x,t)-\mu u^2(x,t) $$
for $x\in \Omega$ and $t > 0.$
On the other hand, in view of  Theorem \ref{theorem3},
\begin{equation}\label{fftggghhhssdddjhjeqsddffx45xx1211}
\mbox{both}~~ h_1~~ \mbox{and}~~ h_2~~\mbox{are bounded in}~~
L^\infty((0,\infty);L^q(\Omega))~~ \mbox{for any}~~ q\in(1,\infty).
\end{equation}
Next,
by the Young inequality, we derive from \dref{hhhssdddjhjeqsddffx45xx1211} that
\begin{equation}\label{fftggghhssderrrhssdddjhjeqsddffx45xx1211}(\nabla u-h_1)\cdot \nabla u\geq \frac{1}{2}|\nabla u|^2-\frac{1}{2}|h_1|^2~~~\mbox{and}~~|\nabla u-h_1|\leq|\nabla u|+|h_1|
~~\mbox{in}~~\Omega\times(0,\infty).
 \end{equation}
Collecting \dref{fftggghhhssdddjhjeqsddffx45xx1211}--\dref{fftggghhssderrrhssdddjhjeqsddffx45xx1211},  applying the standard regularity theory of parabolic equation, we may conclude from \dref{hhhssdddjhjeqsddffx45xx1211}
 that $u$ is a bounded solution of \dref{hhhssdddjhjeqsddffx45xx1211}.
 Finally, due to the parabolic H\"{o}lder regularity (see e.g. Theorem 1.3 of \cite{Porzio710}), we may derive
   \dref{hnjgghhgbhnggeqx45xx1211} is held.
\end{proof}
With Lemmata \ref{fghfbglemma4563025xxhjklojjkkkgyhuissddff}--\ref{ffgghhfhhghfbglemma4563025xxhjklojjkkkgyhuissddff} in hand,
by means of standard arguments, we can finally verify the claimed statements on decay of solutions
in the case $a = 0.$

{\bf The proof of Theorem \ref{theoremfgb3}}
Suppose on contrary that \dref{ggbbdffff1.1fghyuisdakkklll} is not held.
Then there exist positive constant  $C_1 > 0$ and $(t_j)_{j\in N}\subset (1,\infty)$ such that $t_j\rightarrow\infty$  as $j\rightarrow\infty$ and
\begin{equation}\label{fgvhnjggssderrhhgbhnggeqx45xx1211}
\|u(\cdot, t_j)\|_{L^\infty(\Omega)}\geq C_1~~~\mbox{for all}~~j\geq N.
\end{equation}
On the other hand, invoking Lemma \ref{ffgghhfhhghfbglemma4563025xxhjklojjkkkgyhuissddff}, in light of the Arzel\`{a}-Ascoli theorem we derive  that
\begin{equation}\label{fgvhnjggsssddsderrhsdddhgbhnggeqx45xx1211}
u(\cdot, t)_{t>1}~~\mbox{is relatively compact in}~~ C^0(\bar{\Omega}).
\end{equation}
we conclude that there exist subsequences of $\{t_j\}$,
still denoted in the same way, such that
\begin{equation}\label{fgvsssderrhnjggssderrhhgbhnggeqx45xx1211}
u(\cdot, t_j)\rightarrow u_{\infty}~~~\mbox{in}~~L^\infty(\Omega)~~~\mbox{as}~~j\rightarrow\infty
\end{equation}
with some nonnegative $u_{\infty} \in C^0(\bar{\Omega})$. However, due to  the decay property \dref{fghfbglemma4563025xxhjklojjkkkgyhuissddff}, we derive  that
\begin{equation}\label{fgvssddfffsderrhnjggssderrhhgbhnggeqx45xx1211}
u(\cdot, t)\rightarrow 0~~~\mbox{in}~~L^1(\Omega)~~~\mbox{as}~~t\rightarrow\infty
\end{equation}
Therefore, combining \dref{fgvsssderrhnjggssderrhhgbhnggeqx45xx1211}
and \dref{fgvssddfffsderrhnjggssderrhhgbhnggeqx45xx1211}, we see that necessarily
$$u_{\infty}\equiv0,$$
which contradicts \dref{fgvhnjggssderrhhgbhnggeqx45xx1211} and thereby proves the first claim in \dref{ggbbdffff1.1fghyuisdakkklll}. The claimed stabilization property
of $v$ can be derived along the same lines, relying on an application of \dref{cz2sedfgg.5g5gghh56789hhjui7ssddd8jj90099}, and on
\dref{kmlgbhnggeqx45xx1211}.

{\bf Acknowledgement}:
 The authors are very grateful to the anonymous reviewers for their carefully reading and valuable suggestions
which greatly improved this work.
This work is partially supported by the
Natural Science Foundation of Shandong Province of China (No. ZR2016AQ17), the National Natural
Science Foundation of China (No. 11601215)
 and the Doctor Start-up Funding of Ludong University (No. LA2016006).

\end{document}